\newtheorem{thm}{Theorem}
\newtheorem{prop}[thm]{Proposition}
\newtheorem{lem}[thm]{Lemma}
\newtheorem{cor}[thm]{Corollary}
\theoremstyle{definition}
\newtheorem{defn}[thm]{Definition}
\newtheorem{ex}[thm]{Example}
\newtheorem{rem}[thm]{Remark}
\def\x@arrow{\DOTSB\Relbar}
\def\xlongequalsignfill@{\arrowfill@\x@arrow\Relbar\x@arrow}
\newcommand{\xlongequal}[2][]{%
        \ext@arrow 0099\xlongequalsignfill@{#1}{#2}}
\def\xlongrightarrowfill@{\arrowfill@\relbar\relbar\longrightarrow}
\newcommand{\xlongrightarrow}[2][]{%
        \ext@arrow 0099\xlongrightarrowfill@{#1}{#2}}
\renewcommand{\bar}{\overline}
\renewcommand{\[}{\begin{equation*}}
\renewcommand{\]}{\end{equation*}}
\newcommand{\e}{\sqrt{-1}}
\def\scrF{\mathcal{F}}
\def\spinc{$Spin^c$}
\begin{document}
\title[ ]{Seiberg--Witten invariants on manifolds with Riemannian foliations of codimension 4}
\author{Yuri Kordyukov}\thanks{The first named author was supported by the grant no. 16-01-00312 from the Russian Foundation
of Basic Research.}
\address{Institute of Mathematics, Russian Academy of Sciences, 112 Chernyshevsky str. 450008 Ufa, Russia} \email{yurikor@matem.anrb.ru} 
\author{Mehdi Lejmi}\thanks{The second named author acknowledges travel support from the Communaut\'{e} fran\c{c}aise de Belgique via an ARC and from the Belgian federal government via the PAI ``DyGeSt"} 
\address{Department of Mathematics, Bronx Community College of CUNY, Bronx, NY 10453, USA.}
\email{mehdi.lejmi@bcc.cuny.edu}
\author{Patrick Weber}\thanks{The third named author was supported by an Aspirant grant from the F.R.S.-FNRS and acknowledges travel support from the Communaut\'{e} fran\c{c}aise de Belgique via an ARC and from the Belgian federal government via the PAI ``DyGeSt".}
\address{D\'epartement de Math\'ematiques, Universit\'e Libre de Bruxelles CP218, Boulevard du Triomphe, Bruxelles 1050, Belgique. }
\email{pweber@ulb.ac.be}

\begin{abstract}
We define Seiberg--Witten equations on closed manifolds endowed with a Riemannian foliation of codimension $4$. When the foliation is taut, we show compactness of the moduli space under some hypothesis
satisfied for instance by closed K-contact manifolds.
Furthermore, we prove some vanishing and non-vanishing results and we highlight that the invariants may be used to
distinguish different foliations on diffeomorphic manifolds.
\end{abstract}

\maketitle

\section{introduction}

Seiberg--Witten invariants are one of the main tools in the study of the differential topology of 4-manifolds. Since the foundational paper~\cite{Wit}, a lot of work has been done to apply this theory to various aspects of three and four-dimensional geometry. 
A natural idea is to extend this framework to higher-dimensional manifolds. However the Seiberg--Witten equations then turn out to be overdetermined and one needs to assume that the manifold carries an additional geometric structure to extract useful information. For some results in this direction see~\cite{Bil-Der-Koc,Deg-Bul,Deg-Ozd,Gao-Tia}.


This article lays the groundwork for the case in which the higher-dimensional manifold admits a Riemannian foliation of codimension~$4$. For Yang--Mills instantons this has already been investigated by Wang~\cite{Wan} and, to a lesser extent, by Baraglia and Hekmati~\cite{Bar-Hek} in the more restrictive context of K-contact 5-manifolds. 

The main technical ingredient needed when extending to Riemannian foliations is the theory of basic transversally elliptic operators~\cite{Kac}. Most proofs become adaptations of standard arguments (see for instance~\cite{Mor, nic, Sal} or the excellent survey of Li~\cite{Li}) but we decided to include them nevertheless in order to stay self-contained and to emphasize the modifications which have to be done due to the presence of the mean curvature form of the foliation.
The main result of the paper is the compactness of the moduli space when the foliation is taut, and under a technical hypothesis satisfied for instance by closed K-contact manifolds. Furthermore, we give a non-vanishing result for transversally K\"ahler foliations and
we highlight that the invariants may be used to distinguish different foliations on diffeomorphic manifolds. 
As the present work advanced, we became aware that J.~Lee and A.~Renner have work in progress on related results according to~\cite{Wan}.

The plan of the article is the following: Sections~\ref{Sec_2},~\ref{Sec_3} and~\ref{Sec_4} are of an expository nature, setting up the notions of transverse structures and basic forms needed later on. We also establish a Weitzenb\"{o}ck formula as done in~\cite{Gla-Kam}. After these preliminaries, in Section~\ref{Sec_5} we write down a basic version of the Seiberg--Witten equations and in Section~\ref{Sec_6}, we state a basic version of Sobolev estimates. In Section~\ref{Sec_7}, we attack the crucial compactness property of the moduli space whilst Section~\ref{Sec_8} deals with other properties of the moduli space such as transversality and orientability and eventually gives the definition of basic Seiberg--Witten invariants. Applying the Weitzenb\"{o}ck formula, we deduce in Section~\ref{Sec_9} that, under some curvature assumptions, all solutions of the basic Seiberg--Witten equations have trivial spinor. Interestingly enough, on non-taut foliations, the mean curvature form persists and hence gives a non-trivial difference with the classical four-dimensional case. On the other hand, as expected, the equations simplify considerably in the presence of a transversal K\"{a}hler structure. This allows us to prove in Section~\ref{Sec_10} a non-vanishing theorem for basic Seiberg--Witten invariants. The most well-known examples of transversally K\"{a}hler foliations being Sasakian manifolds, we take a quick look in Section~\ref{Sec_11} at this class and their almost-K\"{a}hler counterpart: K-contact manifolds. We conclude this article by giving possible applications of the basic Seiberg--Witten invariants. We hope that this will inspire future work.

%
\medskip

{\bf Acknowledgements: } We warmly thank Ken Richardson for his invaluable help. 
We are very grateful to Shuguang Wang for pointing out some mistakes in an earlier draft as well as letting us know about his work in progress with his student Andrew Renner on elliptic estimates for basic sections. 
We thank Georges Habib, Charles Boyer and Weimin Chen for very useful discussions and Tian-Jun Li for his interest in this work and comments.
Finally, we thank Joel Fine for proposing the problem and his support. 

%

\section{Transverse structures}\label{Sec_2}

Let $M$ be a closed oriented smooth manifold of dimension $n$ endowed with a {\it{Riemannian foliation}} $\scrF$
of codimension $m$ and $g$ a {\it{bundle-like metric}} on~$M$~\cite{Rei}.
The metric $g$ gives an orthogonal splitting of the tangent bundle $TM=T\scrF\oplus T\scrF^\perp$. Moreover we have
a direct sum
\[
g=g_{T\scrF}\oplus g_{T\scrF^\perp}.
\] 
The normal bundle $Q=TM/T\scrF$ is identified with
$T\scrF^\perp$ and so the metric $g_{T\scrF^\perp}$ induces a Riemannian metric $g_{Q}$ on $Q$.
Since the metric $g$ is bundle-like, the metric~$g_{Q}$ satisfies
\[
\mathcal{L}_{X} g_{Q}=0,\quad{\text{for all }}X \in \Gamma(T\scrF),
\]
where $\mathcal{L}$ is the Lie derivative.

A bundle-like metric assures the existence of a unique torsion-free metric connection $\nabla^T$
called the \textit{transverse Levi-Civita connection}~\cite{Mol,Rei,Ton} on the normal bundle~$Q$. 
Explicitly, let $D^g$ be the Levi-Civita connection associated to the metric~$g$ and denote by $\pi:TM\longrightarrow Q$
the projection operator. We define $\nabla^T$ on $Q$ for $s\in\Gamma(Q)$ by
\[
\nabla^T_X s:=\left\{\begin{array}{ccl}\pi\left([X,Y_s]\right),&&\quad X\in\Gamma(T\scrF), \\
 \pi\left(D^g_X Y_s\right),&&\quad X\in\Gamma(Q),\end{array}\right.
\]
where $Y_s\in\Gamma(T\scrF^\perp)$ corresponds to $s$ via the identification $Q\cong T\scrF^\perp.$
The torsion-free condition and the Jacobi identity imply that $\nabla^T$ is a {\it{leafwise flat connection}} i.e.\! its curvature
$R^{T}$ satisfies 
\[
R^{T}(X,Y) :=\nabla^T_X\nabla^T_Y-\nabla^T_Y\nabla^T_X-\nabla^T_{[X,Y]}= 0, \quad \text{for all }X,Y \in \Gamma(T\scrF).
\]
Actually, it can be shown that $R^T$ satisfies the stronger condition 
\[
\iota_XR^{T}= 0, \quad \text{for all }X\in \Gamma(T\scrF),
\]
where $\iota$ is the contraction operator (see for example \cite{Habib-thesis}).
We define {\textit{the transversal Ricci curvature}} $Ric^T:\Gamma(Q)\rightarrow \Gamma(Q)$ and the
{\textit{transversal scalar curvature}} $s^T$ by
\[
Ric^T(X):=\sum_{i=1}^mR^T(X,e_i)e_i,\quad s^T:=\sum_{i=1}^mg_{Q}(Ric^T(e_i),e_i),
\]
where $\{e_1,\cdots,e_m\}$ is a local $g_{Q}$-orthonormal frame of $Q.$

The {\it{mean curvature vector field}} $\tau$ of $\scrF$ is defined by
\[
\tau:=\sum_{i=1}^{\dim\scrF}\pi(D^g_{\xi_i}\xi_i),
\]
where $\{\xi_1,\cdots,\xi_{\dim\scrF}\}$ is a local $g_{T\scrF}$-orthonormal basis of $T\scrF.$
The \textit{mean curvature form} $\kappa$ is the $g_Q$-Riemannian dual of $\tau.$
In other words, the mean curvature form is the trace of the second fundamental form of the foliation, which itself is symmetric by integrability of the distribution.
A Riemannian foliation $\scrF$ is called \textit{taut} if there exists a bundle-like metric for which $\kappa=0$.

The connection $\nabla^T$ induces a connection, still denoted by $\nabla^T$, on the bundle~$\Lambda^rQ^\ast.$
The {\it{basic sections}} $\Gamma_B(\Lambda^r Q^\ast)$ of $\Lambda^r Q^\ast$ are then defined by
\[
 \Gamma_B(\Lambda^r Q^\ast):=\{\alpha\in\Gamma(\Lambda^rQ^\ast)\,|\,\nabla^T_X\alpha=0,\,\,\forall X\in\Gamma(T\scrF)\}.
 \]
We can identify $ \Gamma_B(\Lambda^rQ^\ast)\subset\Omega^r(M):=\Gamma(\Lambda^r T^\ast(M))$
with the set of {\it{basic forms}} 
\[
\Omega_B^r(M):=\{\alpha\in\Omega^r(M)\,|\,\iota_X(\alpha)=0,\iota_X(d\alpha)=0,\,\,\forall X\in\Gamma(T\scrF)\},
\]
where $d$ is the exterior derivative. We have the orthogonal decomposition~\cite{Alv}
$$ \Omega^r(M)= \Omega^r_B(M) \oplus \Omega^r_B(M)^\perp,$$
with respect to the $C^\infty$-Fr\'echet topology.
In particular, the mean curvature form can be written as
$$\kappa = \kappa_B + \kappa_0,$$
where $\kappa_B$ is the basic part and $\kappa_0$ its orthogonal complement.
Moreover, it is shown in~\cite{Alv} that the basic part of the mean curvature is closed, i.e.\! $d\kappa_B=0$.

The metric $g_Q$ induces a {\it{transverse Hodge-star operator}}~\cite{Ton}
\[
\bar\ast:\Lambda^rQ^\ast\longrightarrow\Lambda^{m-r}Q^\ast.
\]
Since the metric $g$ is bundle-like, the transverse Hodge-star operator preserves basic forms
\[
\bar\ast:\Omega_B^r(M)\longrightarrow\Omega_B^{m-r}(M).
\]
It is related to the usual Hodge-star operator $\ast$ (induced by the metric $g$) via
\[
\bar\ast\,\alpha=(-1)^{(m-r)\dim\scrF}\ast\left(\alpha\wedge\chi_\scrF\right),
\]
where $\alpha\in\Omega_B^r(M)$ and $\chi_\scrF$ is the {\it{characteristic form}} of the foliation $\scrF$~\cite{Ton}.
We remark that we have the (basic) inner product in $\Omega^r_B(M)$
\begin{equation}\label{inner_product}
\langle\alpha,\beta\rangle=\int_M\alpha\wedge\bar\ast\beta\wedge\chi_\scrF,
\end{equation}
which is the restriction of the usual inner product on $\Omega^r(M)$ to $\Omega_B^r(M)$~\cite{Ton}.

Note that the exterior derivative $d$ preserves basic forms. Define $$d_B := d |_{\Omega_B^r(M)}: \Omega_B^r(M) \longrightarrow \Omega_B^{r+1}(M)$$ to be the restriction of $d$ to basic forms. 
The resulting complex of basic forms is a subcomplex of the de Rham complex.
Its cohomology is called the \textit{basic cohomology} $H^r(\scrF)$ and can be interpreted as some kind of de Rham cohomology on the leaf space.
It turns out that for Riemannian foliations on closed manifolds the basic cohomology groups are all finite-dimensional~\cite{Kac-Hec-Ser,Par-Ric}.

In terms of the transverse Hodge-star operator, the adjoint operator $\delta_B$ of $d_B$ with respect to the (basic) inner product~(\ref{inner_product}) is given by
\begin{equation}\label{codiff_exp}
\delta_B\,\alpha=(-1)^{m(r+1)+1}\,\bar\ast\,(d_B- \kappa_B\wedge)\,\bar\ast\,\alpha,
\end{equation}
where $\alpha\in\Omega_B^r(M).$
In a local orthonormal basic frame $\{e_1,\cdots,e_m\}$ of $\Gamma(Q)$ we can write
\begin{equation}\label{diff_codif_expression}
d_B=\sum_{i=1}^me_i^{\flat_{g_Q}}\wedge\nabla^T_{e_i} {\text{ and its codifferential }}\delta_B=-\sum_{i=1}^m\iota_{e_i}\nabla^T_{e_i}+\iota_{\tau_B},
\end{equation}
where $\tau_B^{\flat_{g_Q}}=\kappa_B$ and $\flat_{g_Q}$ denotes the $g_Q$-Riemannian dual~\cite{Alv,Jun}. 
One can then define the {\textit{basic Laplacian}}
\[
\Delta_B:=d_B\delta_B+\delta_Bd_B.
\]
Since the basic Laplacian is a {\textit{basic transversally elliptic}} differential operator (see Section~\ref{Sec_6}),
it is a Fredholm operator with a finite index~\cite{Kac}.
Moreover, using the Hodge decomposition of the basic Laplacian~\cite{Kam-Ton-1},
we have that $\dim H^r(\scrF)$ is equal to the (finite) dimension of $\Delta_B$-harmonic basic $r$-forms. In general, $\Delta_B$ does {\it{not}} commute
with the transverse Hodge operator $\bar\ast$ whereas when $\kappa_B=0$ we do have $\Delta_B\,\bar\ast=\bar\ast\,\Delta_B.$ 

\begin{rem}\label{harmonic_remark}
It is known that the space of bundle-like metrics on $(M,\scrF)$ is infinite dimensional~\cite{San}. 
It can be shown that any bundle-like metric can be deformed in the leaf directions leaving the transverse part untouched in such a way that the mean curvature form becomes basic~\cite{Dom}. 
Moreover, another suitable conformal change of the metric in the leaf direction will make $\kappa_B$, the basic component of $\kappa$, $\Delta_B$-harmonic~\cite{Mar-Min-Ruh}.
\end{rem}

\begin{rem}\label{periodic_form}
We remark that, for any foliation $\scrF$ on a connected manifold, $H^1(\scrF)$ is a subgroup of $H^1(M,\mathbb{R})$~\cite[Proposition 2.4.1]{bo-ga}.
Moreover, if $\kappa_B=0,$ then it readily follows from~(\ref{diff_codif_expression}) that any basic $\Delta_B$-harmonic form is $\Delta^g$-harmonic, where~$\Delta^g$ is the (usual) Riemannian Laplacian associated to the metric $g.$ 
\end{rem}

When the foliation $\scrF$ has codimension $m=4$, we get a decomposition
\[
\Lambda^2Q^\ast=\Lambda^+Q^*\oplus\Lambda^-Q^*,
\]
where $\Lambda^\pm$ corresponds to the $(\pm 1)$-eigenvalue under the action of the transverse Hodge-star operator $\overline\ast.$
Since the metric is bundle-like, there is an induced splitting of basic $2$-forms
\[
\Omega^2_B(M)=\Omega^+_B(M)\oplus\Omega^-_B(M).
\]
Let $\mathcal{H}^+(\scrF)$ (resp. $\mathcal{H}^-(\scrF)$) be the space of $\Delta_B$-harmonic $\bar\ast$-self-dual
(resp. $\bar\ast$-anti-self-dual) basic $2$-forms. When $\kappa_B=0,$ we have $H^2(\mathcal{F})\cong \mathcal{H}^+(\scrF)\oplus \mathcal{H}^-(\scrF).$

We can consider the following {{basic transversally elliptic}} linear operator 
\begin{equation*}
\delta_B\oplus d^+_B:\Omega^1_B(M)\longrightarrow\Omega^0_B(M)\oplus\Omega^+_B(M),
\end{equation*}
where $d^+_B:=\frac{1}{2}(1+\bar\ast)\,d_B$ is the composition of $d_B$ with the projection onto $\Omega^+_B(M).$ 
In particular, the kernel and the cokernel of $\delta_B\oplus d^+_B$ are finite-dimensional.
\begin{prop}\label{complex}
The cohomology groups of the transversally elliptic complex~\cite{Wan}
\begin{equation*}
0\rightarrow\Omega^0_B(M)\xlongrightarrow{d_B}\Omega^1_B(M)\xlongrightarrow{d_B^+}\Omega^+_B(M)\rightarrow 0
\end{equation*}
are respectively $\mathbb{R},\{\alpha\in\Omega^1_B(M)\,|\,d_B^+\alpha=\delta_B\alpha=0\}$ and $\{\beta\in\Omega^+_B(M)\,|\,(d_B-\kappa_B\wedge)\beta=\delta_B\beta=0\}.$
In the particular case $\kappa_B=0$, the groups 
are respectively~$\mathbb{R}$, $H^1(\scrF)$ and $\mathcal{H}^+(\scrF).$
\end{prop}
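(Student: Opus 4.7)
My plan is to apply Hodge theory for the basic transversally elliptic complex, and then to sharpen the resulting harmonic descriptions when $\kappa_B=0$. The groups $H^0$ and $H^1$ fall out of standard Hodge manipulations; the subtlety is at $H^2$, where the mean curvature enters through formula~(\ref{codiff_exp}). The main technical hurdle I anticipate is the careful bookkeeping between $\delta_B$ and $(d_B-\kappa_B\wedge)$ on self-dual $2$-forms, together with the foliated Stokes identity needed to close the $H^1$ description in the taut case.

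For $H^0$, a $d_B$-closed basic function on the connected manifold $M$ is constant, giving $\mathbb{R}$. For $H^1$, given $\alpha\in\Omega^1_B(M)$ with $d_B^+\alpha=0$, I would solve $\Delta_B f=\delta_B\alpha$ for a basic function $f$; this is possible because $\delta_B\alpha$ is $L^2$-orthogonal to the constants via $\langle\delta_B\alpha,c\rangle=\langle\alpha,d_B c\rangle=0$. Replacing $\alpha$ by $\alpha-d_B f$ gives a representative in the same class with $d_B^+$-component zero (as $d_B^2=0$) and $\delta_B$-component zero, and two such representatives coincide since $\delta_B d_B g=0$ forces $\|d_B g\|^2=0$ on the connected $M$. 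For $H^2$, the elliptic complex provides a Hodge decomposition $\Omega^+_B(M)=\mathrm{im}(d_B^+)\oplus\ker((d_B^+)^*)$. A direct computation using self-adjointness of $\bar\ast$ and $\bar\ast\beta=\beta$ for $\beta\in\Omega^+_B(M)$ shows $\langle d_B^+\alpha,\beta\rangle=\langle\alpha,\delta_B\beta\rangle$, so $(d_B^+)^*=\delta_B|_{\Omega^+_B(M)}$. Substituting $\bar\ast\beta=\beta$ into~(\ref{codiff_exp}) with $m=4,r=2$ gives $\delta_B\beta=-\bar\ast(d_B-\kappa_B\wedge)\beta$, and since $\bar\ast$ is an isomorphism the two conditions $\delta_B\beta=0$ and $(d_B-\kappa_B\wedge)\beta=0$ are equivalent, which is exactly the content of the statement.

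When $\kappa_B=0$, the $H^2$ description collapses to $d_B\beta=\delta_B\beta=0$, i.e.\ $\Delta_B$-harmonic basic self-dual $2$-forms, so $H^2\cong\mathcal{H}^+(\scrF)$. For $H^1$, the remaining task is to upgrade $d_B^+\alpha=0$ (with $\delta_B\alpha=0$) to $d_B\alpha=0$. Since $d_B\alpha$ is anti-self-dual,
\begin{equation*}
\|d_B\alpha\|^2=-\int_M d_B\alpha\wedge d_B\alpha\wedge\chi_\scrF=-\int_M d_B(\alpha\wedge d_B\alpha)\wedge\chi_\scrF.
\end{equation*}
I would then invoke the foliated Stokes identity $\int_M d_B\eta\wedge\chi_\scrF=-\int_M\eta\wedge\kappa_B\wedge\chi_\scrF$ valid for $\eta\in\Omega^{m-1}_B(M)$, which follows from the adjointness relation $\langle d_B\eta,\bar\ast 1\rangle=\langle\eta,\delta_B(\bar\ast 1)\rangle$ together with the evaluation of $\delta_B(\bar\ast 1)$ in terms of $\kappa_B$ via~(\ref{codiff_exp}). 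With $\kappa_B=0$ this forces $d_B\alpha=0$; combined with $\delta_B\alpha=0$, $\alpha$ is a $\Delta_B$-harmonic basic $1$-form, hence represents an element of $H^1(\scrF)$, completing the identification.
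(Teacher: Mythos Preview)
Your proof is correct and follows the same Hodge-theoretic structure as the paper. For $H^2$ both arguments use~(\ref{codiff_exp}) together with $\bar\ast\beta=\beta$ to pass between $\delta_B\beta=0$ and $(d_B-\kappa_B\wedge)\beta=0$; your remark that these two conditions are in fact equivalent on $\Omega^+_B(M)$ (so the statement is slightly redundant) is correct and matches what the paper derives.

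The one place you take a different computational route is the taut case for $H^1$. The paper stays inside the $\delta_B$-calculus: from $d_B^+\alpha=0$ one has $\bar\ast d_B\alpha=-d_B\alpha$, whence
\[
\delta_Bd_B\alpha=-\delta_B\bar\ast d_B\alpha=\bar\ast(d_B-\kappa_B\wedge)d_B\alpha=-\bar\ast(\kappa_B\wedge d_B\alpha),
\]
which vanishes when $\kappa_B=0$ and then forces $d_B\alpha=0$ by pairing with $\alpha$. You instead compute $\|d_B\alpha\|^2$ directly via anti-self-duality and a foliated Stokes identity applied to $\alpha\wedge d_B\alpha$. Both are the same integration by parts unwound differently; the paper's version has the mild advantage of reusing~(\ref{codiff_exp}) uniformly and not requiring a separate Rummler-type formula, while yours is perhaps more transparent about where exactly $\kappa_B$ enters.
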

\begin{proof}
First, let $\alpha\in\Omega^1_B(M)$ be in the kernel of $d^+_B$ and $g_Q$-orthogonal to the image of $d_B.$ This means that $\delta_B\alpha=0$. Moreover, using~(\ref{codiff_exp}), we have
\begin{equation*}
\delta_Bd_B\alpha=-\delta_B\,\bar\ast d_B\alpha=\bar\ast\left(d_B-\kappa_B\wedge\right)d_B\alpha=-\bar\ast\left(\kappa_B\wedge d_B\alpha\right).
\end{equation*}
In particular, when $\kappa_B=0,$ it follows that $d_B\alpha=\delta_B\alpha=0.$
Next, suppose that $\beta\in\Omega^+_B(M)$ is $g_Q$-orthogonal to the image of $d_B^+.$ Then $\delta_B\beta=0$ and
\begin{equation*}
d_B\beta=d_B\bar\ast\beta=\bar\ast\delta_B\beta+\kappa_B\wedge\bar\ast\beta.
\end{equation*}
Hence $(d_B-\kappa_B\wedge)\beta=0$. This implies that $\beta\in\mathcal{H}^+(\scrF)$ when $\kappa_B=0$.
\end{proof}

\section{Basic Dirac operator}\label{Sec_3}

%
%
%
%
%


Choose a point $x \in M$ and a $g_Q$-orthonormal basis $\{e_1,\cdots,e_m\}$ of $Q_x$. The \textit{Clifford algebra} of $Q_x$ is the complex algebra generated by $1$ and $e_1,\cdots,e_m$ subject to the relations
\[
e_i e_j +e_j e_i = -2\delta_{ij}.
\]
Define the {\it{transverse Clifford bundle}} $Cl(Q)$ over $M$ as the $\mathbb{Z}_2$-graded vector bundle
whose fiber at the point $x\in M$ is the Clifford algebra of $Q_x.$ This vector bundle is associated with the principal $SO(m)$-bundle of oriented $g_Q$-orthonormal frames in $Q.$  
The transverse Levi-Civita connection $\nabla^T$ induces on $Cl(Q)$ a leafwise flat connection $\nabla^{Cl(Q)}$ which
is compatible with the multiplication preserving the $\mathbb{Z}_2$-grading.

\begin{defn}
A principal bundle $P\rightarrow(M,\scrF)$ is called {\textit{foliated}}~\cite{Kam-Ton-2, Mol}
if it is equipped with a {\textit{lifted}} foliation $\scrF_P$ invariant under the structure group action, transversal to the
tangent space to the fiber and~$T\scrF_P$ projects isomorphically onto~$T\scrF.$ A vector bundle is foliated if its associated frame bundle is foliated.
\end{defn}

\begin{defn}
{\it{A transverse Clifford module}} $E$
is a complex vector bundle over $M$ equipped with an action of $Cl(Q)$, which we denote by $\bullet$
\[
Cl(Q) \otimes \Gamma(E) \xlongrightarrow{\bullet}\Gamma(E).  
\]
A transverse Clifford module $E$ is called {\it{self-adjoint}} if it is equipped
with a Hermitian metric $(\cdot,\cdot)$ such that the Clifford action is skew-adjoint at each point, i.e. 
\[
(X \bullet \psi_1, \psi_2) = -(\psi_1, X \bullet \psi_2),\quad \forall X \in\Gamma\left(Q\right),\,\forall\psi_1,\psi_2 \in \Gamma(E).
\]
\end{defn}
\begin{defn}
A {\it{Clifford connection}} on $E$ is a connection $\nabla^E$ which is compatible with the Clifford action, i.e.
\[
\nabla^E(X\bullet \psi) = (\nabla^{Cl(Q)}X) \bullet \psi + X \bullet (\nabla^E \psi),\quad\forall X\in \Gamma\left(Cl(Q)\right),\forall\psi \in \Gamma(E).
\]
\end{defn}

Let $E$ be a complex Hermitian {\textit{foliated}} bundle over $M$ such that
$E$ is a transverse Clifford module, self-adjoint and equipped with a Hermitian Clifford connection $\nabla^E.$ 
Moreover, we choose $\nabla^E$ to be a {\it{basic connection}} which means that
the connection and curvature forms of $\nabla^E$ are (Lie algebra-valued) basic forms (this is always possible by a result of~\cite{Kam-Ton-2}).
In this case, $(E,\nabla^E)$ is called a {\textit{basic Clifford module bundle}}.

Define the operator $\mathcal{D}: \Gamma(E) \longrightarrow \Gamma(E)$ between smooth sections of $E$ as the composition of the maps
\[
\Gamma(E)\xlongrightarrow{\nabla^E}\Gamma(Q)^\ast\otimes \Gamma(E) \cong\Gamma(Q)\otimes \Gamma(E)\xlongrightarrow{\bullet}\Gamma(E).
\]
In a local $g_Q$-orthonormal frame  $\{e_1,\cdots,e_m\}$ of $Q$ we write
\[
\mathcal{D}=\sum_{i=1}^m e_i\bullet\nabla^E_{e_i}.
\]
The operator $\mathcal{D}$ maps basic sections $\Gamma_B(E)=\{s\in\Gamma(E)\,|\,\nabla^E_Xs=0,\,\forall X\in\Gamma(T\scrF)\}$ to basic sections
but it is {\it{not}} formally self-adjoint as $D^*=D- \tau \bullet.$ Define the {\it{transverse Dirac operator}}~\cite{Dou-Gla-Kam-Yu,Gla-Kam,Pro-Ric-1} by
\[
\mathcal{D}_{tr}:=\mathcal{D}-\frac{1}{2}\tau\bullet.
\]
The operator $\mathcal{D}_{tr}$ is a self-adjoint, transversally elliptic differential operator but it is {\textit{not}} {{basic}}.
However, the operator 
\[
\mathcal{D}_B:=\mathcal{D}-\frac{1}{2}\tau_B\bullet,
\]
called {\it{basic Dirac operator}}~\cite{Hab-Ric}, is a basic self-adjoint transversally elliptic differential operator.
In particular, it is Fredholm. We emphasize that the basic Dirac operator $\mathcal{D}_B$ depends on the choice of the bundle-like metric $g$, i.e. \!not only on~$g_Q$ but also on the leafwise metric.
Nevertheless, it turns out that each choice of bundle-like metric associated to $g_Q$ produces a
new basic Dirac operator which is conjugate to the initial one. In particular, the dimension of the kernel
of the basic Dirac operator is the same for any bundle-like metric associated to $g_Q$~\cite{Hab-Ric}. 

\section{{Transverse \spinc-structures}}\label{Sec_4}
Suppose in the following that $Q$ is \spinc~\cite{Law-Mic}. In this case $(M,\scrF,g_Q)$ is said to be {\it{transversally}} \spinc~\cite{Pro-Ric}. We fix a {\it{transverse \spinc-structure}}
\[
\rho:Q \longrightarrow End(\mathcal{W}),
\]
where {{the complex Spinor bundle}} $\mathcal{W}$ is a basic Clifford module bundle.
The homomorphism $\rho$ is extended to an isomorphism of algebra bundles
\[
\rho:Cl^c(Q)\longrightarrow End(\mathcal{W}).
\]
Choose a $g_Q$-orthonormal basis $\{e_1,\cdots,e_m\}$ of $Q_x$. Then the chirality operator is the multiplication by  
\[
\left(\e\right)^k\,e_1\bullet\cdots\bullet e_m,
\] 
where $k=\frac{m}{2}$ if $m$ is even and $k=\frac{m+1}{2}$ if $m$ is odd. It is an involution and hence induces a natural splitting
\[
\mathcal{W}=\mathcal{W}^+\oplus\mathcal{W}^-,
\]
where $\mathcal{W}^\pm$ correspond to the $\pm$-eigenspaces. Moreover, the chirality operator is a {\it{basic bundle map}}~\cite{Pro-Ric},
i.e.~it preserves the space of basic spinors $\Gamma_B(\mathcal{W})$ of the complex spinor bundle $(\mathcal{W},\nabla^{\mathcal{W}})$. 

Consider a basic Hermitian connection $A$ on the (foliated) determinant line bundle $L$
of $\mathcal{W}$ i.e. $A$ is a Hermitian connection on $L$ which satisfies $\mathcal{L}_{X}A=\iota_X A=0,\,\forall X\in\Gamma(T\scrF)$.
Together with the transverse Levi-Civita connection $\nabla^T$, $A$ induces a basic Clifford Hermitian connection $\nabla^{A}$ on $\mathcal{W}$.
Let $\mathcal{D}_B^A$ be the basic Dirac operator
associated to $\nabla^A.$ Then we get the following Weitzenb\"{o}ck formula (see~\cite{Gla-Kam}):

\begin{prop}
Let $(M,\scrF,g_Q)$ and $(\mathcal{W},\nabla^A)$ be as above. Then 
\begin{equation}\label{weitzenbock}
(\mathcal{D}_B^A)^2\psi=\left(\nabla^{A}_{tr}\right)^{\ast}\nabla^{A}_{tr}\psi+\frac{1}{4}\left(s^T+ |\kappa_B|^2 - 2\delta_B\kappa_B\right)\psi+\frac{1}{2}F_A \bullet\psi,
\end{equation}
for any basic spinor $\psi \in \Gamma_B(\mathcal{W})$ 
Here~$s^T$ is the transverse scalar curvature, $F_A$ is the curvature of
the connection~$A$ and 
\begin{equation}\label{nabla_transverse}
\left(\nabla^{A}_{tr}\right)^{\ast}\nabla^{A}_{tr}\psi=-\sum_{i=1}^m\nabla^A_{e_i}\nabla^A_{e_i}\psi+\sum_{i=1}^m\nabla^A_{\nabla^T_{e_i}e_i}\psi+\nabla^A_{\tau_B},
\end{equation}
where $\{e_1,\cdots,e_m\}$ is a local basic $g_Q$-orthonormal frame of $Q.$ and $\nabla^{A}_{tr}$ is the operator defined by $\displaystyle \nabla^{A}_{tr}\psi=\sum_{i=1}^me_i^{\flat_{g_Q}} \otimes(\nabla_{e_i}^A\psi).$
\end{prop}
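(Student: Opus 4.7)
The plan is to adapt the classical pointwise Lichnerowicz calculation to the transverse setting and then to carefully track the contribution of the basic correction $-\tfrac12\tau_B\bullet$ that distinguishes $\mathcal{D}_B^A$ from the uncorrected operator $\mathcal{D}^A=\sum_i e_i\bullet\nabla^A_{e_i}$. Working at a point $x$ in a local basic $g_Q$-orthonormal frame $\{e_i\}$ chosen so that $\nabla^T_{e_i}e_j|_x=0$, I would expand $(\mathcal{D}^A)^2\psi=\sum_{i,j}e_i\bullet e_j\bullet\nabla^A_{e_i}\nabla^A_{e_j}\psi$ and separate the diagonal $i=j$ piece (giving $-\sum_i\nabla^A_{e_i}\nabla^A_{e_i}\psi$) from the antisymmetric piece in $i<j$ (giving a curvature term). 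Splitting the curvature of $\nabla^A$ into its $\mathfrak{so}$-part (from $\nabla^T$) and its $\mathfrak{u}(1)$-part (from $A$), the classical Clifford identity for curvature on spinors delivers $\tfrac{s^T}{4}\psi+\tfrac12 F_A\bullet\psi$. Rewriting the diagonal piece via~\eqref{nabla_transverse}, I obtain at $x$
$$(\mathcal{D}^A)^2\psi=(\nabla^A_{tr})^{\ast}\nabla^A_{tr}\psi-\nabla^A_{\tau_B}\psi+\frac{s^T}{4}\psi+\frac{1}{2}F_A\bullet\psi.$$

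Next, I would expand $(\mathcal{D}_B^A)^2=(\mathcal{D}^A)^2-\tfrac12\bigl(\mathcal{D}^A\circ(\tau_B\bullet)+(\tau_B\bullet)\circ\mathcal{D}^A\bigr)+\tfrac14(\tau_B\bullet)^2$. The last term is $-\tfrac14|\kappa_B|^2$ since Clifford squaring a vector produces $-|v|^2$. For the anticommutator, the relation $e_i\bullet\tau_B+\tau_B\bullet e_i=-2g_Q(e_i,\tau_B)$ gives
$$\mathcal{D}^A(\tau_B\bullet\psi)+\tau_B\bullet\mathcal{D}^A\psi=\sum_i e_i\bullet(\nabla^T_{e_i}\tau_B)\bullet\psi-2\nabla^A_{\tau_B}\psi.$$
Decomposing each Clifford product of two vectors as scalar plus 2-form, the first sum splits into $-\mathrm{div}(\tau_B)\psi$ plus the Clifford action of $\sum_i e_i^{\flat_{g_Q}}\wedge\nabla^T_{e_i}\kappa_B=d_B\kappa_B$; the latter vanishes since $d_B\kappa_B=0$ by the result of Section~\ref{Sec_2}. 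Applying $\delta_B$ to $\kappa_B$ via~\eqref{diff_codif_expression} yields $\delta_B\kappa_B=-\mathrm{div}(\tau_B)+|\kappa_B|^2$, so $-\mathrm{div}(\tau_B)=\delta_B\kappa_B-|\kappa_B|^2$.

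Assembling everything,
$$(\mathcal{D}_B^A)^2\psi=(\mathcal{D}^A)^2\psi+\nabla^A_{\tau_B}\psi+\tfrac{1}{4}|\kappa_B|^2\psi-\tfrac{1}{2}\delta_B\kappa_B\psi,$$
and inserting the expression above for $(\mathcal{D}^A)^2\psi$ causes the two $\nabla^A_{\tau_B}\psi$ terms to cancel, producing exactly~\eqref{weitzenbock}.

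I expect the main difficulty to be bookkeeping rather than any single conceptual step: one must juggle sign conventions (notably $e_ie_j+e_je_i=-2\delta_{ij}$ and $(v\bullet)^2=-|v|^2$) and track carefully where the mean curvature enters. It appears in two distinct ways, once implicitly inside the transverse rough Laplacian~\eqref{nabla_transverse} (whose adjoint structure with respect to the basic inner product~\eqref{inner_product} picks up $\tau_B$ through the characteristic form $\chi_\scrF$), and once explicitly in the basic correction $-\tfrac12\tau_B\bullet$ added to $\mathcal{D}^A$. The cancellation of the two resulting $\nabla^A_{\tau_B}$ contributions, together with the combination $\tfrac12|\kappa_B|^2-\tfrac14|\kappa_B|^2=\tfrac14|\kappa_B|^2$ from the cross and square terms, is precisely what produces the clean scalar coefficient $\tfrac14(s^T+|\kappa_B|^2-2\delta_B\kappa_B)$.
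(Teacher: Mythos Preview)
Your argument is correct: the expansion of $(\mathcal{D}^A-\tfrac12\tau_B\bullet)^2$, the identification of the anticommutator via $e_i\bullet\tau_B+\tau_B\bullet e_i=-2g_Q(e_i,\tau_B)$, the use of $d_B\kappa_B=0$, and the relation $\delta_B\kappa_B=-\mathrm{div}(\tau_B)+|\kappa_B|^2$ from~\eqref{diff_codif_expression} all check out, and the cancellation of the $\nabla^A_{\tau_B}$ terms is handled correctly. Note that the paper does not supply its own proof of this proposition; it simply records the formula and refers to~\cite{Gla-Kam}, so your write-up in fact fills in what the paper leaves to the literature.
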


\begin{rem}
Slesar introduced in~\cite{Sle} a {\it{modified connection}} on the space $ \Gamma_B(\mathcal{W})$ given by
\begin{equation*}
\bar{\nabla}^A_{X}\psi:=\nabla^A_{X}\psi-\frac{1}{2}g(X,\tau_B)\psi,
\end{equation*}
where $X\in\Gamma(TM)$. Using the above modified connection, the Weitzenb\"{o}ck formula becomes
\begin{equation}\label{weitzenbock-2}
(\mathcal{D}_B^A)^2\psi=\left({\bar{\nabla}^A_{tr}}\right)^{\ast}\bar{\nabla}^A_{tr}\psi+\frac{1}{4}s^T\psi+\frac{1}{2}F_A \bullet\psi,
\end{equation}
where $\displaystyle{\bar{\nabla}^A_{tr}}\psi=\sum_{i=1}^me_i^{\flat_{g_Q}} \otimes(\bar{\nabla}_{e_i}^A\psi)$ for a local basic $g_Q$-orthonormal frame $\{e_1,\cdots,e_m\}$ of $Q.$
\end{rem}

\section{Sobolev spaces and basic elliptic theory}\label{Sec_6}

Let $E$ be a foliated vector bundle on $M$ equipped with a basic Hermitian structure and a compatible basic connection $\nabla^E$.  
Denote by ${\mathcal E}_B$ the sheaf of germs of smooth basic sections of $E$ and by $\Gamma_B(E)$ the space of the smooth basic sections of $E$. Thus, $\Gamma_B(E)=\Gamma({\mathcal E}_B)$.  
For $p\in [1,\infty)$, we denote by $\|\cdot\|_{L^{p}}$ the $L^p$-norm on $\Gamma(E)$ and, for any integer $k\geqslant 0$ and $p\in [1,\infty)$, we denote by $\|\cdot\|_{k,p}$ the Sobolev norm on $\Gamma(E)$:
\[
\|u\|_{k,p}=\sum_{j=0}^k\left(\int_M |(\nabla^E)^ju|^pv_g\right)^{1/p}, \quad u\in \Gamma(E),
\]
where $v_g$ stands for the Riemannian volume form.  
Let $L^{p}\left(\Gamma_B(E)\right)$ be the $L^p$-norm closure of the space $\Gamma_B(E)$ of the smooth basic sections of $E$. Let $W^{k,p}\left(\Gamma_B(E)\right)$ be the closure of $\Gamma_B(E)$ under the Sobolev norm $\|\cdot\|_{k,p}$.  
To ease the notations, we let $W^{k,p}$ stand for $W^{k,p}\left(\Gamma_B(E)\right)$ unless it is necessary to specify the space.

Consider a foliated chart $\Omega\subset M \cong U\times V$ with some open subsets $U\subset {\mathbb R}^d$ and $V\subset {\mathbb R}^m$. 
We assume that $U$ and $V$ are connected. 
We will also assume that the chart is regular, which means that it can be extended to a foliated chart $\Omega^\prime\cong U^\prime\times V^\prime$ such that  $\overline{\Omega}\subset \Omega^\prime$, $\overline{U} \subset U^\prime\subset {\mathbb R}^d$ and $\overline{V}\subset V^\prime \subset {\mathbb R}^m$. There exists a local trivialization $E\left|_\Omega\right. \cong U\times V \times \mathbb C^N$ of $E$ over $\Omega$ (a distinguished local trivialization, see, for instance, \cite{Mol}) such that any basic section $s\in \Gamma(\Omega, {\mathcal E}_B)$ on $\Omega$ is written as 
\begin{equation}\label{local-form}
s(x,y)=u(y), \quad x\in U,\quad y\in V,
\end{equation}
with some vector-valued function $u\in C^\infty(V,\mathbb C^N)$. It can be shown that~$s$ belongs to $W^{k,p}(\Gamma(\Omega, {\mathcal E}_B))$ if and only if $u\in W^{k,p}(V,\mathbb C^N)$. Moreover, the $W^{k,p}(\Gamma(\Omega, {\mathcal E}_B))$-norm of $s$ is equivalent to its $W^{k,p}( U\times V, \mathbb C^N)$-norm which, in turn, equals the $W^{k,p}(V, \mathbb C^N)$-norm of $u$:
\begin{equation}\label{equiv-norms}
C_1\|u\|_{W^{k,p}(V, \mathbb C^N)} \leqslant \|s\|_{W^{k,p}(\Gamma_B(E\left|_\Omega\right.))} \leqslant C_2 \|u\|_{W^{k,p}(V, \mathbb C^N)}
\end{equation}
with some constants $C_1, C_2>0$, independent of $s$. In this way, we obtain isomorphisms 
\[
\Gamma(\Omega, {\mathcal E}_B)\cong C^\infty(V,\mathbb C^N), \quad W^{k,p}(\Gamma(\Omega, {\mathcal E}_B))\cong W^{k,p}(V, \mathbb C^N).
 \]

The basic Sobolev spaces $W^{k,p}$ satisfy the {\textit{basic Sobolev embedding theorem}} (see~\cite[Lemma 3.7]{Bel-Par-Ric} when $p=2$). 

\begin{thm}\label{basic_sobolev_embedding}
Given integers $l$, $q$ such that $0\leqslant l\leqslant k$ and $l - \frac{m}{q}\leqslant k-\frac{m}{p}<l$  with~$m$ the codimension of the foliation~$\scrF$, there is a continuous inclusion 
\[
W^{k,p}\hookrightarrow W^{l,q}.
\]
Moreover, for $l < k-\frac{m}{p}$
\[
W^{k,p}\hookrightarrow C^l,
\]
where $C^l$ stands for $C^l(\Gamma_B(E))$ the space of basic sections of $E$ of class $C^l$.
\end{thm}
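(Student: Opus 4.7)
The plan is to reduce the basic Sobolev embedding theorem to the classical Sobolev embedding on bounded open subsets of $\mathbb{R}^m$, exploiting the fact that the transverse slice $V$ in a foliated chart has dimension exactly $m$, matching the codimension of $\scrF$, together with the local description (\ref{local-form}) of basic sections and the norm equivalence (\ref{equiv-norms}).

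First I would cover the compact manifold $M$ by finitely many regular foliated charts $\{\Omega_i \cong U_i \times V_i\}_{i=1}^N$ contained in extensions $\Omega_i' \cong U_i' \times V_i'$ with $V_i \subset V_i' \subset \mathbb{R}^m$ bounded, and choose an ordinary smooth partition of unity $\{\chi_i\}$ subordinate to the $\Omega_i$. Given a basic section $s \in W^{k,p}$, the local form (\ref{local-form}) identifies $s|_{\Omega_i'}$ with a vector-valued function $u_i \in W^{k,p}(V_i', \mathbb{C}^N)$, and (\ref{equiv-norms}) gives
\[
\|u_i\|_{W^{k,p}(V_i', \mathbb{C}^N)} \;\leqslant\; C\,\|s\|_{W^{k,p}}.
\]

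Next I would apply the classical Sobolev embedding theorem to $u_i$ on the Euclidean slice $V_i' \subset \mathbb{R}^m$: in the regime $l-m/q \leqslant k-m/p < l$ this yields $\|u_i\|_{W^{l,q}(V_i)} \leqslant C'\|u_i\|_{W^{k,p}(V_i')}$, whereas in the regime $l < k-m/p$ Morrey's inequality yields $\|u_i\|_{C^l(V_i)} \leqslant C'\|u_i\|_{W^{k,p}(V_i')}$. Translating back via (\ref{equiv-norms}) produces the desired local embedding on each chart $\Omega_i$ with constants uniform in $s$.

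Finally I would patch globally. For the $W^{l,q}$ case, I use the partition of unity together with monotonicity of the integral and the local estimates: for each $0 \leqslant j \leqslant l$,
\[
\int_M |(\nabla^E)^j s|^q\, v_g \;\leqslant\; \sum_{i=1}^N \int_{\Omega_i} |(\nabla^E)^j s|^q\, v_g \;\leqslant\; \sum_{i=1}^N \|s|_{\Omega_i}\|_{W^{l,q}}^q \;\leqslant\; C''\,\|s\|_{W^{k,p}}^q,
\]
and summing over $j$ yields $\|s\|_{W^{l,q}} \leqslant C'''\,\|s\|_{W^{k,p}}$. The $C^l$-embedding follows analogously by taking a maximum over the finitely many charts. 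The main technical point to watch is the comparison between the ambient Riemannian volume form $v_g$ and the Euclidean measure on the slices $V_i$, and the compatibility of the distinguished trivializations used in (\ref{local-form}) on chart overlaps; both are controlled by the bundle-like hypothesis on $g$ and the compactness of $M$, and affect only the implicit constants, not the embeddings themselves.
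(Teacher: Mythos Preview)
Your proposal is correct and takes essentially the same approach as the paper: reduce to the classical Sobolev embedding on the $m$-dimensional transverse slice via the local form \eqref{local-form} and the norm equivalence \eqref{equiv-norms}. The paper's own proof is a single sentence invoking exactly these two ingredients, so your version simply spells out the cover-and-patch argument that the paper leaves implicit; note that the partition of unity you introduce is never actually used in your final estimate, since the finite-cover inequality $\int_M \leqslant \sum_i \int_{\Omega_i}$ already suffices.
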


\begin{proof}
The theorem immediately follows from \eqref{equiv-norms} and the classical Sobolev embedding theorem. 
\end{proof}

We also get a {\textit{basic Rellich theorem}} (cf.~\cite[Proposition 4.5]{Kam-Ton-1}).

\begin{thm}
The inclusion
\[
W^{k+1,p}\hookrightarrow W^{k,p}
\]
is compact for all $k$ and $p$.
\end{thm}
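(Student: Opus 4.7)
The plan is to reduce the statement to the classical Rellich theorem by exploiting the local description of basic sections on a regular foliated chart given by \eqref{local-form} and \eqref{equiv-norms}: on such a chart $\Omega\cong U\times V$, a basic section is a function on $V$, and the $W^{k,p}$-norms agree up to constants. Since $M$ is closed, I can pick a finite cover of $M$ by regular foliated charts $\Omega_\alpha\cong U_\alpha\times V_\alpha$, together with smaller subcharts $\Omega_\alpha^\prime\cong U_\alpha^\prime\times V_\alpha^\prime$ with $\overline{\Omega_\alpha^\prime}\subset\Omega_\alpha$ and $\overline{V_\alpha^\prime}\subset V_\alpha$, still covering $M$; then I fix a smooth partition of unity $\{\varphi_\alpha\}$ subordinate to $\{\Omega_\alpha^\prime\}$ (the $\varphi_\alpha$ need not be basic).

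Given a bounded sequence $\{s_n\}\subset W^{k+1,p}$, its restriction $s_n|_{\Omega_\alpha}$ corresponds under the isomorphism in \eqref{equiv-norms} to a sequence $u_n^\alpha\in W^{k+1,p}(V_\alpha,\mathbb{C}^N)$ with norms uniformly bounded by $C\,\|s_n\|_{W^{k+1,p}}$. By the classical Rellich--Kondrachov theorem applied to the bounded domain $V_\alpha^\prime\Subset V_\alpha$, a subsequence of $\{u_n^\alpha\}$ converges in $W^{k,p}(V_\alpha^\prime,\mathbb{C}^N)$. Performing a standard diagonal extraction over the finite set of charts, I obtain a single subsequence, still denoted $\{s_n\}$, such that $u_n^\alpha$ is Cauchy in $W^{k,p}(V_\alpha^\prime,\mathbb{C}^N)$ for every $\alpha$. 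Transporting back via \eqref{equiv-norms}, the restrictions $s_n|_{\Omega_\alpha^\prime}$ are Cauchy in $W^{k,p}(\Gamma_B(E|_{\Omega_\alpha^\prime}))$.

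To assemble these local Cauchy properties into a global one, I write $s_n-s_\ell=\sum_\alpha \varphi_\alpha(s_n-s_\ell)$ and apply the triangle inequality together with the Leibniz rule for $\nabla^E$ applied to $\varphi_\alpha(s_n-s_\ell)$; since each $\varphi_\alpha$ is smooth and compactly supported in $\Omega_\alpha^\prime$, one gets a bound
\[
\|s_n-s_\ell\|_{W^{k,p}(M)}\leqslant C\sum_\alpha\|s_n-s_\ell\|_{W^{k,p}(\Omega_\alpha^\prime)}
\]
with a constant $C$ depending only on the $\varphi_\alpha$ and the connection coefficients. Each summand tends to zero by the previous step, so $\{s_n\}$ is Cauchy in $W^{k,p}$. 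Since $W^{k,p}$ is complete, the subsequence converges there, which yields the compactness of the inclusion.

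The only nontrivial aspect is that I do not need a basic partition of unity: the product $\varphi_\alpha s_n$ may fail to be basic, but this is irrelevant because the limit is taken in the ambient Sobolev space, whose norm controls the basic Sobolev norm on any foliated subchart. All of the actual analytic content has already been isolated in \eqref{equiv-norms}, which turns the problem into an application of the Euclidean Rellich theorem on each transversal $V_\alpha^\prime$, so no step presents a genuine obstacle beyond bookkeeping.
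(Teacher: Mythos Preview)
Your argument is correct, but it is considerably more elaborate than what the paper does. The paper observes that, by definition, $W^{k,p}=W^{k,p}(\Gamma_B(E))$ is the closure of $\Gamma_B(E)$ in the ambient Sobolev space $W^{k,p}(\Gamma(E))$ and is therefore a closed subspace. A bounded sequence in $W^{k+1,p}(\Gamma_B(E))$ is bounded in $W^{k+1,p}(\Gamma(E))$; the classical Rellich theorem on the full compact manifold $M$ produces a subsequence converging in $W^{k,p}(\Gamma(E))$; closedness forces the limit into $W^{k,p}(\Gamma_B(E))$. That is the entire proof.

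Your route instead passes to the transversals $V_\alpha$, applies the Euclidean Rellich theorem there in dimension $m$, and reassembles via a non-basic partition of unity. This is in the spirit of the paper's proofs of the basic Sobolev embedding and the $L^p$-estimates, where working on the transversal is essential because the relevant exponents depend on the codimension $m$ rather than on $\dim M$. For Rellich, however, no exponent is at stake, so there is no advantage in descending to the transversal; the ambient Rellich theorem on $M$ already gives everything, and the closedness of the basic subspace handles the rest in one line. Your argument is a valid alternative, but the paper's is both shorter and conceptually cleaner for this particular statement.
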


\begin{proof}
Since $W^{k,p}$ is closed in $W^{k,p}\left(\Gamma(E)\right)$, this theorem is an easy consequence of the classical Rellich theorem. 
\end{proof}

Finally, we get a {\textit{basic Sobolev multiplication theorem}} (the standard proof~\cite[Theorem 4.39]{Ada-Fou} works in the basic case).

\begin{thm}\label{basic_sobolev_multiplication}
When $0 \leqslant l \leqslant k$:
\begin{enumerate}
\item{Above the borderline: if $k-\frac{m}{p}>0$ and $k-\frac{m}{p}\geqslant l-\frac{m}{q},$ then we have a continuous map
$$W^{k,p}\times W^{l,q}\longrightarrow W^{l,q}.$$
In the particular case $k=l$ and $p=q$: if $k- \frac{m}{p} >0,$ then $$W^{k,p}\times W^{k,p}\longrightarrow W^{k,p}.$$}
\item{Under the borderline: if $k-\frac{m}{p}<0$ and $l-\frac{m}{q}<0,$ then we have a continuous map $$W^{k,p}\times W^{l,q}\longrightarrow W^{t,r},$$
with $0\leqslant t \leqslant l$ and $r$ such that $0<\frac{t}{m}+\frac{1}{p}-\frac{k}{m}+\frac{1}{q}-\frac{l}{m}\leqslant \frac{1}{r}\leqslant 1.$}
\item{On the borderline: if $k-\frac{m}{p} = 0$ and $l- \frac{m}{q} \leqslant 0,$ then we have a continuous map
$$\left(W^{k,p} \cap L^\infty \right) \times \left(W^{l,q}  \cap L^\infty \right) \longrightarrow \left(W^{l,q} \cap L^\infty \right).$$
If $l-\frac{m}{q} < 0,$ then we can improve this to a continuous map
$$\left(W^{k,p} \cap L^\infty \right) \times W^{l,q}  \longrightarrow W^{l,q}.$$
}
\end{enumerate}
\end{thm}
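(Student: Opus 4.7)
My plan is to reduce the statement to the classical Sobolev multiplication theorem of Adams--Fournier~\cite{Ada-Fou} on an open subset of $\mathbb{R}^m$, exploiting the fact that the norm equivalence~\eqref{equiv-norms} converts basic Sobolev questions on $M$ (of total dimension~$n$) into classical Sobolev questions on the transversal $V \subset \mathbb{R}^m$. The operative dimension in the classical theorem is then precisely the codimension~$m$ of the foliation, which matches the~$m$ appearing in all three regimes of the statement.

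First I would cover $M$ by a finite family of regular foliated charts $\{\Omega_\alpha \cong U_\alpha \times V_\alpha\}_{\alpha}$ as in the setup preceding~\eqref{local-form}. On each $\Omega_\alpha$, a basic section $s_i$ takes the form $s_i(x,y)=u_{i,\alpha}(y)$, and the pointwise product (or pointwise contraction, depending on how the two bundle structures pair) of two basic sections is again basic, represented transversally by $u_{1,\alpha}\cdot u_{2,\alpha}$. I would then apply the classical Sobolev multiplication theorem to the pair $(u_{1,\alpha}, u_{2,\alpha})$ on $V_\alpha \subset \mathbb{R}^m$, yielding the required estimate in each of the three regimes (above, under, and on the borderline). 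Transporting back via~\eqref{equiv-norms} produces a chart-by-chart basic estimate.

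To globalize, I would choose a subordinate smooth partition of unity $\{\phi_\alpha\}$ on $M$ and write $s_1 s_2 = \sum_\alpha \phi_\alpha (s_1 s_2)$. Each summand is supported in a single chart, and its relevant norm is bounded using the chart-wise estimate together with the $C^l$-size of $\phi_\alpha$; summing over~$\alpha$ gives the required continuous bilinear map. The main obstacle I anticipate is the bookkeeping associated with the fact that $\{\phi_\alpha\}$ is \emph{not} basic in general, so multiplication by a cutoff destroys basicness at intermediate steps. The clean way around this is to note that the final product $s_1 s_2$ is automatically basic, so the $W^{l,q}$ norm on the left-hand side is already a basic norm, while the non-basic cutoffs only contribute harmless constants depending on the chosen finite cover. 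Beyond this bookkeeping and the standard handling of the $L^\infty$ factor appearing in the on-borderline case, no analytical input beyond~\cite{Ada-Fou} should be required.
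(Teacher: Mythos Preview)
Your proposal is correct and matches the paper's approach, which gives no proof beyond the parenthetical remark that ``the standard proof~\cite[Theorem 4.39]{Ada-Fou} works in the basic case''; your localization via~\eqref{equiv-norms} and application of the classical theorem on $V_\alpha \subset \mathbb{R}^m$ is exactly the mechanism behind that assertion, parallel to the paper's proofs of Theorems~\ref{basic_sobolev_embedding} and~\ref{regularity}. As a minor simplification, you can sidestep the non-basic partition-of-unity bookkeeping entirely by bounding $\|s_1 s_2\|_{l,q}$ directly by the sum of its restrictions over the finite chart cover (since $s_1 s_2$ is basic and the basic Sobolev norm is just the ambient one), exactly as is done in the proof of Theorem~\ref{regularity}.
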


We can define $\mathcal{G}^{k+1,p}_B=W^{k+1,p}\left(\Gamma_B\left(\mathrm{End}\left(\mathcal{W}\right)\right)\right)\cap C^0_B(M,S^1)$ whenever $k+1>\frac{m}{p}$. 
It can be shown that $\mathcal{G}^{k+1,p}_B$ is an infinite-dimensional Lie group. 

\quad

Let $E_1$, $E_2$ be foliated vector bundles on $M$ equipped with basic Hermitian structures and compatible basic connections. 
Recall (see, for instance, \cite{Kac}) that a basic differential operator of order $\ell$ from $E_1$ to $E_2$ is a morphism of sheaves $P: {\mathcal E}_{1,B}\to {\mathcal E}_{2,B}$ such that, in any foliated chart $\Omega$ with coordinates $(x,y)\in U\times V \subset {\mathbb R}^d \times  {\mathbb R}^m$ equipped with distinguished local trivializations of $E_1$ and $E_2$ over it, the associated map $P_\Omega: \Gamma(\Omega,{\mathcal E}_{1,B})\cong C^\infty(V,\mathbb C^N)\to \Gamma(\Omega,{\mathcal E}_{2,B})\cong C^\infty(V,\mathbb C^N)$ has an expression 
\[
P_\Omega=\sum_{|\alpha|\leqslant \ell}a_\alpha(y)\frac{\partial^{|\alpha| }}{\partial y_1^{\alpha_1}\cdots \partial y_m^{\alpha_m}},
 \] 
 where $\alpha=(\alpha_1,\ldots,\alpha_m)\in {\mathbb N}^m$ is a multi-index, and, for  any $\alpha$ with $|\alpha|\leqslant \ell$, $a_\alpha$ is a smooth $N_1\times N_2$-matrix valued function on $V$.  

It is easy to see that any basic differential operator $P$ from $E_1$ to $E_2$ of order~$\ell$ extends to a continuous map from $W^{k+\ell,p}(\Gamma_B(E_1))$ to $W^{k,p}(\Gamma_B(E_2))$ for any integer $k\geqslant 0$ and $p\in (1,\infty)$. 

A basic differential operator $P$ from $E_1$ to $E_2$ is said to be transversally elliptic if, in any foliated chart $\Omega\cong U\times V \subset {\mathbb R}^d \times  {\mathbb R}^m$ equipped with distinguished local trivializations of $E_1$ and $E_2$ over it, the associated operator $P_\Omega: C^\infty(V,\mathbb C^N)\to  C^\infty(V,\mathbb C^N)$ is an elliptic differential operator (i.e. its principal symbol $\sigma_P(y,\eta)$ is invertible for any $(y,\eta)\in V\times ({\mathbb R}^q\setminus \{0\})$).

We have the following regularity result ($L^p$-estimates) for basic transversally elliptic operators which will be crucial for the compactness of the moduli space.

 \begin{thm}\label{regularity}
Let $P:\Gamma_B(E_1)\rightarrow\Gamma_B(E_2)$ be a basic transversally elliptic operator of order $\ell$. Then, for every integer $k\geqslant 0$ and $p\in (1,\infty)$, there are constants $c_1>0, c_2>0$ (depending on $k$ and $p$) such that  
\begin{equation}\label{eq:regularity}
\|s\|_{k+\ell,p}\leqslant c_1\|P s\|_{k,p}+c_2\|s\|_{k,p}, \quad  s\in W^{k+\ell,p}(\Gamma_B(E_1)).
\end{equation}
Moreover, if $p=2$, $\ell=1$ and $s$ is $L^2$-orthogonal to the kernel of $P$ then one can assume $c_2=0$.
\end{thm}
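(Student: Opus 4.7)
The plan is to mirror the strategy used earlier in this section for the basic Sobolev embedding and Rellich theorems: the global estimate on $M$ is reduced, via the local form \eqref{local-form} and the norm equivalence \eqref{equiv-norms}, to the classical interior $L^p$-estimate for elliptic operators on open subsets of $\mathbb{R}^m$.

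First I would fix a finite cover of $M$ by regular foliated charts $\Omega_j \subset \Omega_j^\prime \cong U_j^\prime \times V_j^\prime$, together with a partition of unity $\{\varphi_j\}$ subordinate to $\{\Omega_j\}$ consisting of basic functions, chosen so that in each $\Omega_j^\prime$ the function $\varphi_j$ depends only on the transverse coordinate $y \in V_j^\prime$. Such basic bump functions exist on any Riemannian foliation, since functions pulled back from the local transverse slices are automatically basic. For $s \in W^{k+\ell,p}(\Gamma_B(E_1))$ the product $\varphi_j s$ is then basic and compactly supported in $\Omega_j$, and under the isomorphism $W^{k,p}(\Gamma(\Omega_j^\prime, \mathcal{E}_{1,B})) \cong W^{k,p}(V_j^\prime, \mathbb{C}^{N_1})$ it corresponds to a function $u_j$ compactly supported in $V_j^\prime$. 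The basic transversally elliptic operator $P$ transfers to a classical elliptic operator $P_{\Omega_j^\prime}$ on $V_j^\prime$, so the standard Calder\'on--Zygmund interior $L^p$-estimate provides
\[
\|u_j\|_{W^{k+\ell,p}(V_j^\prime)} \leqslant C \bigl( \|P_{\Omega_j^\prime} u_j\|_{W^{k,p}(V_j^\prime)} + \|u_j\|_{W^{k,p}(V_j^\prime)} \bigr).
\]
Writing $P(\varphi_j s) = \varphi_j Ps + [P, \varphi_j] s$ with $[P, \varphi_j]$ a basic differential operator of order $\leqslant \ell - 1$, transferring the resulting inequalities back through \eqref{equiv-norms}, and summing over $j$ yields
\[
\|s\|_{k+\ell,p} \leqslant C_1 \|Ps\|_{k,p} + C_2 \|s\|_{k+\ell-1,p}.
\]
The lower-order term on the right is then absorbed either by the interpolation inequality $\|s\|_{k+\ell-1,p} \leqslant \varepsilon \|s\|_{k+\ell,p} + C_\varepsilon \|s\|_{k,p}$ or by an induction on $k$ starting from $k=0$, producing \eqref{eq:regularity}.

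For the improved estimate when $p=2$, $\ell = 1$, and $s$ is $L^2$-orthogonal to $\ker P$, I would argue by contradiction. If no such estimate with $c_2 = 0$ held, there would exist a sequence $(s_n) \subset W^{1,2}$ with $s_n \perp_{L^2} \ker P$, $\|s_n\|_{1,2} = 1$, and $\|Ps_n\|_{0,2} \to 0$. The basic Rellich theorem extracts a subsequence converging in $L^2$ to some $s_\infty$; applying \eqref{eq:regularity} to the differences $s_n - s_m$ upgrades the convergence to a Cauchy sequence in $W^{1,2}$, so $s_n \to s_\infty$ in $W^{1,2}$. The limit satisfies $Ps_\infty = 0$ and $s_\infty \perp_{L^2} \ker P$, hence $s_\infty = 0$, contradicting $\|s_\infty\|_{1,2} = 1$.

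The step I expect to be most delicate is producing a genuinely basic partition of unity adapted to the chart cover and keeping track of the commutators $[P, \varphi_j]$ uniformly across charts so that the lower-order remainder truly does get absorbed; once this bookkeeping is handled, everything reduces to standard Euclidean elliptic regularity on the transverse slices $V_j^\prime$ and to a Rellich-type compactness argument for the orthogonality refinement.
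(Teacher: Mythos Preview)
Your plan for \eqref{eq:regularity} has a genuine obstruction: basic partitions of unity subordinate to a foliated chart cover do \emph{not} exist in general. A basic function is constant along leaves, so if $\varphi_j$ is basic and supported in $\Omega_j$, it must vanish on every leaf that meets $M\setminus\overline{\Omega_j}$; for a foliation with a dense leaf this forces $\varphi_j\equiv 0$. Your justification (``functions pulled back from the local transverse slices are automatically basic'') only produces functions that are basic \emph{on the chart}; cutting them off to have compact support in $\Omega_j$ requires a cutoff in the leaf direction, which destroys the basic property. So the step ``$\varphi_j s$ is basic and compactly supported in $\Omega_j$'' fails, and with it the passage to compactly supported $u_j$ on $V_j'$.

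The paper sidesteps this entirely by exploiting the regularity of the charts rather than cutoffs. Since each $\Omega_i$ sits inside a larger $\Omega_i'\cong U_i'\times V_i'$, the restriction $s|_{\Omega_i'}$ is still of the form $v_i(y)$ with $v_i$ defined on all of $V_i'$. One then applies the classical \emph{interior} $L^p$-estimate on the transverse slice, controlling $\|v_i\|_{W^{k+\ell,p}(V_i)}$ by $\|P_{\Omega_i}v_i\|_{W^{k,p}(V_i'')}+\|v_i\|_{W^{k,p}(V_i'')}$ for an intermediate $V_i''$ with $\overline{V_i}\subset V_i''$. Summing over the finite cover gives \eqref{eq:regularity} directly, with no commutators and no absorption step.

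For the refinement with $c_2=0$, your Rellich-based contradiction argument is correct and is a legitimate alternative to the paper's route, which instead uses the spectral gap of $P^*P$ and $PP^*$ away from zero and iterates to control $\|s\|_{k+1,2}$ by $\|Ps\|_{k,2}$ for every $k$. Note, though, that the statement asserts the improved estimate for all $k\geqslant 0$, whereas your contradiction is written only for $k=0$; the same compactness argument (normalize $\|s_n\|_{k+1,2}=1$, use Rellich $W^{k+1,2}\hookrightarrow W^{k,2}$, then \eqref{eq:regularity} to upgrade) goes through verbatim for each $k$, so you should say so.
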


\begin{proof}
Let $M=\cup_{i=1}^r \Omega_i$ be a finite covering of $M$ by foliated charts $\Omega_i\cong U_i\times V_i \subset {\mathbb R}^d\times {\mathbb R}^m$ equipped with distinguished local trivializations of the vector bundles $E_1$ and $E_2$ over it. 
Write the restriction $s_i$ of $s$ to $\Omega_i$ in the form \eqref{local-form} with some $v_i\in W^{k,p}(V_i, \mathbb C^N)$.
Using \eqref{equiv-norms}, we get
\[
\|s_i\|_{W^{k+\ell,p}(\Gamma(\Omega_i,{\mathcal E}_{1,B}))} \leqslant C_2 \|v_i\|_{W^{k+\ell,p}(V_i, \mathbb C^N)}.
\]
Take an open subset $V^{\prime\prime}_i\subset V^{\prime}_i$ such that $\overline{V_i}\subset V^{\prime\prime}_i$.
By classical $L^p$-estimate for the elliptic operator $P$, we have 
\[
\|v_i\|_{W^{k+\ell,p}(V_i, \mathbb C^N)}\leqslant c_1\|P_{\Omega_i} v_i\|_{W^{k,p}(V^{\prime\prime}_i, \mathbb C^N)}+c_2\|v_i\|_{W^{k,p}(V^{\prime\prime}_i, \mathbb C^N)}.
\] 
Using \eqref{equiv-norms} again, we get 
\[
\|s_i\|_{W^{k+\ell,p}(\Gamma(\Omega_i,{\mathcal E}_{1,B}))}\leqslant c_1\|Ps_i\|_{W^{k,p}(\Gamma(\Omega^{\prime\prime}_i,{\mathcal E}_{1,B}))}+c_2\|s_i\|_{W^{k,p}(\Gamma(\Omega^{\prime\prime}_i,{\mathcal E}_{1,B}))}.
\] 

Finally, we have 
\begin{align*}
\|s\|_{k+\ell,p}\leqslant &  \sum_{i=1}^r \|s_i\|_{W^{k+\ell,p}(\Gamma(\Omega_i,{\mathcal E}_{1,B}))}\\
\leqslant & c_1 \sum_{i=1}^r \|Ps_i\|_{W^{k,p}(\Gamma(\Omega^{\prime\prime}_i,{\mathcal E}_{1,B}))}+c_2 \sum_{i=1}^r\|s_i\|_{W^{k,p}(\Gamma(\Omega^{\prime\prime}_i,{\mathcal E}_{1,B}))}
\\
\leqslant & rc_1 \sum_{i=1}^r \|Ps\|_{k,p}+rc_2 \sum_{i=1}^r\|s\|_{k,p}, 
\end{align*}
that completes the proof of \eqref{eq:regularity}.

Now we fix $k$ and assume that $p=2$ and $\ell=1$. Since the $L^2$-spectra of the operators $P$ and $P^*$ are discrete, for any $s\in W^{2,2}(\Gamma_B(E_1))$, $L^2$-orthogonal to $\ker P=\ker P^*P$, we have
\[
\|Ps\|^2_{L^2}=(P^*Ps,s)\geqslant \varepsilon\|s\|^2_{L^2},
\] 
and, for any $s\in W^{2,2}(\Gamma_B(E_1))$, $L^2$-orthogonal to $\ker P^*=\ker PP^*$, we have
\[
\|P^*s\|^2_{L^2}=(PP^*s,s)\geqslant \varepsilon\|s\|^2_{L^2},
\] 
where $\varepsilon >0$ is independent of $s$. Here $(\cdot,\cdot)$ denotes the inner product in $L^2$.

Using these estimates, for any $s\in W^{2k,2}(\Gamma_B(E_1))$, $L^2$-orthogonal to the kernel of $P$, we get
\begin{equation}\label{estimate}
((P^*P)^ks,s)\geqslant \varepsilon^k\|s\|^2_{L^2},
 \end{equation}
and, for any $s\in W^{2k,2}(\Gamma_B(E_1))$, $L^2$-orthogonal to the kernel of $P^*$, we get
\begin{equation}\label{estimate*}
((PP^*)^ks,s)\geqslant \varepsilon^k\|s\|^2_{L^2}.
 \end{equation}

It follows by \eqref{estimate} and \eqref{estimate*} that, for any $s\in W^{2k,2}(\Gamma_B(E_1))$, $L^2$-orthogonal to the kernel of $P$, we obtain that 
\begin{equation}\label{estimate-k}
\|s\|^2_{k,2}\leqslant c_3((P^*P)^ks,s),
\end{equation}
with some constant $c_3>0$, independent of $s$.
Indeed, if $k=2K$ is even, we can write 
\[
((P^*P)^ks,s)=\|(P^*P)^Ks\|^2_{L^2},
\]
while if $k=2K+1$ is odd, 
\[
((P^*P)^ks,s)=\|(PP^*)^KPs\|^2_{L^2}.
\]
  
Finally, for any $s\in W^{2k+1,2}(\Gamma_B(E_1))$, $L^2$-orthogonal to the kernel of $P$, using continuity of the operator $(PP^*)^k : W^{2k,2}\to L^2$ and  \eqref{estimate-k}, we derive 
\[
\|Ps\|^2_{2k,2}\geqslant c_4\|(PP^*)^kPs\|^2_{L^2}=((P^*P)^{2k+1}s,s)\geqslant c_3^{-1}c_4\|s\|^2_{2k+1,2}.
\]
Similarly, for any $s\in W^{2k+2,2}(\Gamma_B(E_1))$, $L^2$-orthogonal to the kernel of $P$, using continuity of the operator $P^*(PP^*)^k : W^{2k+1,2}\to L^2$ and  \eqref{estimate-k}, we derive 
\[
\|Ps\|^2_{2k+1,2}\geqslant c_5\|P^*(PP^*)^kPs\|^2_{L^2}=((P^*P)^{2k+2}s,s)\geqslant c_3^{-1}c_5\|s\|^2_{2k+2,2},
\]
that completes the proof. 
\end{proof}

Later on, when dealing with transversality issues of the moduli space, we will need a unique continuation property for the basic Dirac operator $\mathcal{D}_B$ (see ~\cite[proof of Theorem 5.2]{Wan} for a similar fact for the basic Laplacian).


\begin{thm}\label{continuation_thm}
Let $M$ be a connected compact Riemannian manifold endowed with a {{Riemannian foliation}}.
Then the basic Dirac operator $\mathcal{D}_B$ satisfies the unique continuation property: if $\psi\in \Gamma_B\left(\mathcal{W}\right)$ such that $\mathcal{D}_B\psi=0$ and $\psi=0$ on an open subset $S\subset M$, then $\psi\equiv 0$.  
\end{thm}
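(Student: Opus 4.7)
The plan is to reduce the unique continuation property of $\mathcal{D}_B$ to the classical Aronszajn--Cordes unique continuation theorem for second order elliptic operators with scalar principal symbol, applied in a foliated chart to the transverse representative of the basic section.

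First I would use that $\mathcal{D}_B$ is a basic transversally elliptic operator. In any foliated chart $\Omega \cong U\times V$ with a distinguished local trivialization of $\mathcal{W}$, any basic spinor is of the form $\psi(x,y)=u(y)$ with $u\in C^\infty(V,\mathbb C^N)$, and the equation $\mathcal{D}_B\psi=0$ translates into a first order elliptic system for $u$ on $V\subset\mathbb R^m$. By the Weitzenböck-type identity~\eqref{weitzenbock}, or simply by squaring the symbol $\sigma_{\mathcal{D}_B}(y,\eta)=\sqrt{-1}\,\eta\bullet$ and using the Clifford relations $e_ie_j+e_je_i=-2\delta_{ij}$, the operator $\mathcal{D}_B^2$ is transversally elliptic with \emph{scalar} principal symbol $|\eta|^2_{g_Q}\cdot\mathrm{Id}$. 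Hence, in the same foliated chart, $u$ satisfies a second order elliptic equation of the form
\[
-\sum_{i,j=1}^m a^{ij}(y)\,\partial_{y_i}\partial_{y_j}u + (\text{lower order in }\partial_y) = 0
\]
with $a^{ij}(y)$ a smooth positive definite symmetric matrix and with diagonal leading part, i.e.\ exactly the class of operators to which the Aronszajn unique continuation theorem applies.

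Next I would upgrade local vanishing to chart-wise vanishing. Suppose $\psi$ vanishes on a nonempty open subset $W\subset\Omega$. Shrinking if necessary we may assume $W=U_0\times V_0$ for some nonempty open $V_0\subset V$, so the corresponding $u$ vanishes on $V_0$. By Aronszajn's theorem applied to $\mathcal{D}_B^2 u=0$ on the connected open set $V$, we conclude $u\equiv 0$ on $V$, and hence $\psi\equiv 0$ on the whole foliated chart $\Omega$.

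Finally I would run a standard connectedness argument on $M$. Let
\[
W:=\{p\in M\,|\,\psi\equiv 0 \text{ on some open neighborhood of } p\}.
\]
Then $W$ is open by definition and nonempty since $S\subset W$. To see that $W$ is closed, take $p\in\overline{W}$ and a foliated chart $\Omega\ni p$; for some $q\in W\cap\Omega$, $\psi$ vanishes on an open neighborhood of $q$ inside $\Omega$, so by the previous paragraph $\psi\equiv 0$ on $\Omega$, whence $p\in W$. Since $M$ is connected, $W=M$ and $\psi\equiv 0$.

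The main obstacle I foresee is checking that the transverse operator truly has the right form for Aronszajn: one must be careful that the lower order terms coming from the transverse connection $\nabla^T$, the mean curvature correction $-\tfrac{1}{2}\tau_B\bullet$ and the curvature term $\tfrac12 F_A\bullet$ in the Weitzenböck formula are bounded operators acting in a way that does not touch the scalar character of the principal symbol. This is automatic from the identity~\eqref{weitzenbock} since all added terms are of order at most one in the transverse directions, so the argument goes through without further modification.
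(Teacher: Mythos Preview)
Your proposal is correct and follows essentially the same route as the paper: pass to a foliated chart, represent the basic spinor by its transverse profile $u(y)$ on $V\subset\mathbb{R}^m$, and invoke unique continuation for the resulting elliptic operator on~$V$. The only real difference is one of packaging: the paper simply observes that $(\mathcal{D}_B)_\Omega$ is a Dirac-type operator on $V$ and cites the unique continuation property for such operators as a black box, whereas you unpack that black box by squaring to $\mathcal{D}_B^2$ and appealing directly to Aronszajn--Cordes via the scalar principal symbol $|\eta|^2_{g_Q}\,\mathrm{Id}$. Your explicit open--closed connectedness argument on $M$ is also a welcome addition, as the paper's proof stops at the chart level and leaves this step implicit.
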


\begin{proof}
Without loss of generality, we can assume that $S$ is a saturated subset. Take a foliated chart $\Omega\cong U\times V\subset {\mathbb R}^d \times {\mathbb R}^m$ equipped with distinguished local trivialization of  $\mathcal{W}$ over it. Then the intersection $S$ with $\Omega$ has the form $\Omega\cap S \cong U\times W$, where $W$ is an open subset in $V$. Let us write $\psi$ as in \eqref{local-form}
with some $v\in C^\infty(V,{\mathcal C}^{\dim {\mathcal W}})$.  It is easy to see that the operator $(\mathcal{D}_B)_\Omega : C^\infty(V, {\mathcal C}^{\dim {\mathcal W}})\to C^\infty(V, {\mathcal C}^{\dim {\mathcal W}})$  is a Dirac type operator, and we have 
$(\mathcal{D}_B)_\Omega v=0$ and $v=0$ on~$W$. Therefore, the desired statement immediately follows from the unique continuation property for Dirac type operators (see, for instance, \cite{booss,booss-woj}). 
\end{proof}

\section {Basic Seiberg--Witten equations}\label{Sec_5}

Suppose that $M$ is a closed oriented smooth manifold of dimension $n$ endowed with a {{Riemannian foliation}} $\scrF$
of codimension $4$ and $g$ a {{bundle-like}} metric on~$M$. 
Suppose furthermore that the normal bundle $Q$ is \spinc~and fix a {{transverse \spinc-structure}}
$\rho:Cl^c(Q)\longrightarrow End(\mathcal{W}),$
where $\mathcal{W}$ is the complex spinor bundle. Let~$L$ be the determinant line bundle of $\mathcal{W}.$
We denote by $\mathcal{K}_B(L)$ 
the space of (smooth) Hermitian basic connections on $L.$
The space of basic connections on $L$ is an affine space modeled on the space of sections of $C^\infty(\wedge^1Q^\ast,\mathbb{R})$ 
which are constant along the leaves. Hence, it can be identified with the space $\Omega^1_B(M).$  


We define the {\textit{basic Seiberg--Witten equations}} by
\begin{equation}\label{SW_equations}
\left\{\begin{aligned}
\mathcal{D}^A_B(\psi)&=0,\\
F_A^+&=\sigma(\psi),
\end{aligned}\right.
\end{equation}
for the couple $(A,\psi)$ with $A\in\mathcal{K}_B(L)$ 
and $\psi\in\Gamma_B(\mathcal{W}^+)$ a basic section of $\Gamma(\mathcal{W}^+)$. 
Here, $\mathcal{D}_B^A$ is the basic Dirac operator associated to the connection $\nabla^A$ on $\mathcal{W}$ induced by 
the transverse Levi-Civita connection $\nabla^T$ and $A$;
$F_A^+$
is the $\bar\ast$-self-dual part of the curvature of $A$; and
 $\sigma:\Gamma_B(\mathcal{W}^+)\longrightarrow\sqrt{-1}\Omega^+_B(M)$ is the standard quadratic map given by
$\sigma(\psi)=\psi\sqrt{-1}\,\overline{\psi}$, where $\psi\sqrt{-1}\,\overline{\psi}$ corresponds to a traceless endomorphism of $\mathcal{W}^+$
which we can identify to a form in $\Omega^+_B(M)\otimes \mathbb{C}.$

%

The {\textit{perturbed basic Seiberg--Witten equations}}
additionally depend on a basic self-dual 2-form $\mu\in\Omega^+_B(M)$:
\begin{equation}\label{perturbed_SW}
\left\{\begin{aligned}
\mathcal{D}^A_B(\psi)&=0,\\
F_A^+&=\sigma(\psi)+\sqrt{-1}\mu.
\end{aligned}\right.
\end{equation}
%




The {\it{basic gauge group}} $\mathcal{G}_B$ consists of gauge transformations $C^\infty(M,U(1))$ of the $U(1)$-principle (foliated) bundle associated to $L$
leaving invariant the lift $\widetilde\scrF$ of the foliation $\scrF$ to the $U(1)$-principle bundle associated to $L.$
Hence, the group $\mathcal{G}_B$ is identified with the (smooth) basic maps $C^\infty_B(M,S^1)$. An element $u\in\mathcal{G}_B$
acts on a pair $(A,\psi)\in\mathcal{K}_B(L)\times\Gamma_B(\mathcal{W}^+)$ by $(u^\ast A,u\psi):=(A-2u^{-1}du,u\psi).$
The basic gauge group $\mathcal{G}_B$ fixes the space $\mathcal{K}_B(L)$ 
and it is easy to see that the (perturbed) basic Seiberg--Witten equations are invariant under the action of $\mathcal{G}_B$.
We will also consider the based basic gauge group $\mathcal{G}_0:=\{u\in\mathcal{G}\,|\,u(x_0)=1\}$ where a point $x_0 \in M$ is fixed.
The action of $\mathcal{G}_B$ is free except at pairs $(A,0)$ with stabilizer given by constant maps.
On the other hand, the action of $\mathcal{G}_0$ is always free. The group $\mathcal{G}_B$ is actually the product
of $\mathcal{G}_0$ with constant maps.

For technical reasons, we need to consider Banach spaces or Hilbert spaces. So, we define the {\textit{basic Seiberg--Witten map}} 
\begin{equation}\label{SW_map}
\begin{array}{cccc}
\mathfrak{sw}_{\rho,\mu}:&W^{2,2}\left(\mathcal{K}_B(L)\right)\times W^{2,2}\left(\Gamma_B(\mathcal{W}^+)\right)&\longrightarrow&W^{1,2}\left( \sqrt{-1}\Omega^+_B(M)\right)\times W^{1,2}\left(\Gamma_B(\mathcal{W}^-)\right)\\
&(A,\psi)&\longmapsto&\left(F_A^+-\sigma(\psi)-\sqrt{-1}\mu,\mathcal{D}^A_B(\psi)\right).
\end{array}
\end{equation}
We denote 
$\mathfrak{m}_{\rho,\mu}:=\{(A,\psi)\,|\,\mathfrak{sw}_{\rho,\mu}(A,\psi)=0\}$ and we define the moduli spaces
\begin{equation}\label{moduli_space}
\mathcal{M}_{\rho,\mu}:=\frac{\mathfrak{m}_{\rho,\mu}}{\mathcal{G}^{3,2}},\quad\overline{\mathcal{M}}_{\rho,\mu}:=\frac{\mathfrak{m}_{\rho,\mu}}{\mathcal{G}_0^{3,2}}.
\end{equation}
The space $\mathcal{M}_{\rho,\mu}$ is the quotient of $\overline{\mathcal{M}}_{\rho,\mu}$ by the action of constant maps.

%
%

\section{Compactness of the moduli space}\label{Sec_7}

The aim of this section is to show compactness of the moduli space. 
We follow closely the excellent exposition in~\cite{Li}.
Throughout this section all the constants depend on the metric $g$, the perturbation $\mu$ in~(\ref{perturbed_SW}) and the transverse \spinc-structure.
\subsection{Pointwise estimates of $F_A^+$ and $\psi$}

We shall first prove an a priori pointwise bound on the basic spinor $\psi$ and on the curvature $F_A^+$. 
The next lemma follows from a simple computation.

\begin{lem}\label{laplacian_expr}
Let $f$ be a basic function 
and $\{e_1,\cdots,e_m\}$ be a local $g_Q$-orthonormal basic frame of $Q$.
Then
\begin{equation*}
\Delta_Bf=-\sum_{i=1}^m e^2_if+\left(\sum_{i=1}^m\nabla^T_{e_i}e_i\right)f+\tau_Bf.
\end{equation*}
\end{lem}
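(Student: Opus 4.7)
The plan is to compute $\Delta_B f = \delta_B d_B f$ directly from the local expressions in \eqref{diff_codif_expression}, since $\delta_B f = 0$ for a $0$-form. First I would write $d_B f$ in the given basic frame as $\displaystyle d_B f = \sum_{i=1}^m (e_i f)\, e_i^{\flat_{g_Q}}$, which is immediate from the formula $d_B = \sum_i e_i^{\flat_{g_Q}} \wedge \nabla^T_{e_i}$ applied to a function.

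Then I would apply $\delta_B = -\sum_j \iota_{e_j} \nabla^T_{e_j} + \iota_{\tau_B}$. The $\iota_{\tau_B}$-term contributes $\iota_{\tau_B}(d_B f) = \tau_B f$, producing the desired third summand. For the first term, I would expand using the Leibniz rule,
\[
\nabla^T_{e_j}(d_B f) = \sum_i \bigl[(e_j e_i f)\, e_i^{\flat_{g_Q}} + (e_i f)\, (\nabla^T_{e_j} e_i)^{\flat_{g_Q}}\bigr],
\]
contract with $\iota_{e_j}$, and sum over $j$.

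The contraction produces two pieces: $\sum_j e_j^2 f$ (from $\delta_{ji}$) and $\sum_{i,j} (e_i f)\, g_Q(e_j, \nabla^T_{e_j} e_i)$. The key simplification is metric compatibility of $\nabla^T$: since $g_Q(e_j, e_i) = \delta_{ji}$ is constant, $g_Q(e_j, \nabla^T_{e_j} e_i) = -g_Q(\nabla^T_{e_j} e_j, e_i)$, so the second piece becomes $-\bigl(\sum_j \nabla^T_{e_j} e_j\bigr) f$. Putting everything together with the overall sign from $-\sum_j \iota_{e_j} \nabla^T_{e_j}$ and adding the $\iota_{\tau_B}$ contribution yields exactly the claimed formula.

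This is a routine computation and I do not anticipate any genuine obstacle; the only small point of care is the sign bookkeeping when moving $\nabla^T_{e_j}$ past the metric dual in the second Leibniz term and correctly identifying the $\sum_j \nabla^T_{e_j} e_j$ contribution. No taut or basic-mean-curvature hypothesis is needed: the $\tau_B$-term simply survives from \eqref{diff_codif_expression}.
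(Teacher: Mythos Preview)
Your proposal is correct and is precisely the ``simple computation'' the paper alludes to without writing out: expand $\Delta_B f = \delta_B d_B f$ via the local expressions \eqref{diff_codif_expression}, use the Leibniz rule and metric compatibility of $\nabla^T$ to identify the $\sum_j \nabla^T_{e_j} e_j$ term, and retain the $\iota_{\tau_B}$ contribution. There is nothing to add.
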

We deduce a basic version of {\textit{{Kato's inequality}}.

\begin{lem}\label{basic_kato}
Let  $\psi \in \Gamma_B(\mathcal{W})$. 
Then 
\begin{equation*}
\Delta_B\left(|\psi|^2\right)\leqslant 2\,\mathrm{ Re}\left(\left(\nabla^{A}_{tr}\right)^{\ast}\nabla^{A}_{tr}\psi,\psi\right).
\end{equation*}
\end{lem}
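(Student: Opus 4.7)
The plan is a direct computation that parallels the classical Kato inequality, pushing everything through the basic Laplacian formula from Lemma~\ref{laplacian_expr} and the explicit expression \eqref{nabla_transverse} for $(\nabla^A_{tr})^*\nabla^A_{tr}$.

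First I would check that $|\psi|^2$ is basic, so that Lemma~\ref{laplacian_expr} applies. Since $\psi$ is basic we have $\nabla^A_X\psi=0$ for all $X\in\Gamma(T\scrF)$, and the connection is Hermitian, so $X(|\psi|^2)=(\nabla^A_X\psi,\psi)+(\psi,\nabla^A_X\psi)=0$. Hence $|\psi|^2$ is a basic function and Lemma~\ref{laplacian_expr} gives
\[
\Delta_B(|\psi|^2)=-\sum_{i=1}^m e_i^2(|\psi|^2)+\Bigl(\sum_{i=1}^m \nabla^T_{e_i}e_i\Bigr)(|\psi|^2)+\tau_B(|\psi|^2),
\]
where we use the identification $Q\cong T\scrF^\perp$ to act on functions.

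Next I would apply the Hermitian Leibniz rule twice. For any basic frame vector $e_i$,
\[
e_i(|\psi|^2)=2\,\mathrm{Re}(\nabla^A_{e_i}\psi,\psi),\qquad e_i^2(|\psi|^2)=2\,\mathrm{Re}(\nabla^A_{e_i}\nabla^A_{e_i}\psi,\psi)+2|\nabla^A_{e_i}\psi|^2,
\]
and analogously $(\nabla^T_{e_i}e_i)(|\psi|^2)=2\,\mathrm{Re}(\nabla^A_{\nabla^T_{e_i}e_i}\psi,\psi)$ and $\tau_B(|\psi|^2)=2\,\mathrm{Re}(\nabla^A_{\tau_B}\psi,\psi)$. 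Substituting these into the displayed expression for $\Delta_B(|\psi|^2)$ and regrouping, the three terms that combine via \eqref{nabla_transverse} produce exactly $2\,\mathrm{Re}\bigl((\nabla^A_{tr})^*\nabla^A_{tr}\psi,\psi\bigr)$, while the leftover contribution from $e_i^2$ is $-2\sum_i |\nabla^A_{e_i}\psi|^2$, giving the identity
\[
\Delta_B(|\psi|^2)=2\,\mathrm{Re}\bigl((\nabla^A_{tr})^*\nabla^A_{tr}\psi,\psi\bigr)-2\sum_{i=1}^m|\nabla^A_{e_i}\psi|^2.
\]
Since the last term is manifestly nonpositive, the stated inequality follows. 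There is no real obstacle here; the only point requiring minor care is the passage from the sections $e_i,\tau_B\in\Gamma(Q)$ to derivations on the basic function $|\psi|^2$, which is legitimate because $|\psi|^2$ descends to the leaf space and the lifts to $T\scrF^\perp$ differ from any other lifts only by leafwise vector fields annihilating $|\psi|^2$.
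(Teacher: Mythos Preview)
Your proof is correct and follows exactly the same route as the paper: apply Lemma~\ref{laplacian_expr} to the basic function $|\psi|^2$, expand via the Hermitian Leibniz rule, and regroup using \eqref{nabla_transverse} to obtain the identity $\Delta_B(|\psi|^2)=2\,\mathrm{Re}\bigl((\nabla^A_{tr})^*\nabla^A_{tr}\psi,\psi\bigr)-2|\nabla^A_{tr}\psi|^2$. Your additional remarks on why $|\psi|^2$ is basic and why $Q$-sections act unambiguously on it are welcome clarifications but do not alter the argument.
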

\begin{proof}
Apply Lemma~\ref{laplacian_expr} to the basic function $|\psi|^2:$ 
\begin{eqnarray*}
\Delta_B\left(|\psi|^2\right)&=&-\sum_{i=1}^m e^2_i(\psi,\psi)+\left(\sum_{i=1}^m\nabla^T_{e_i}e_i\right)(\psi,\psi)+\tau_B(\psi,\psi)\\
&=&\sum_{i=1}^m\left(-2(\nabla^A_{e_i}\psi,\nabla^A_{e_i}\psi)-(\nabla^A_{e_i}\nabla^A_{e_i}\psi,\psi)-(\psi,\nabla^A_{e_i}\nabla^A_{e_i}\psi)\right)\\
&&+\sum_{i=1}^m(\nabla^A_{\nabla^T_{e_i}e_i}\psi,\psi)+(\psi,\nabla^A_{\nabla^T_{e_i}e_i}\psi)+(\nabla^A_{\tau_B}\psi,\psi)+(\psi,\nabla^A_{\tau_B}\psi)\\
&=&-2|\nabla^{A}_{tr}\psi|^2+2\,\mathrm{ Re}\left(\left(\nabla^{A}_{tr}\right)^{\ast}\nabla^{A}_{tr}\psi,\psi\right).
\end{eqnarray*}
In the last line we used expression (\ref{nabla_transverse}).
\end{proof}

We can now combine the basic Kato inequality with the Weitzenb\"{o}ck formula to get the desired {{a priori pointwise estimates}}.
\begin{lem}\label{pointwise_bound}
Let  $(A,\psi)\in \mathfrak{m}_{\rho,\mu}.$ 
Then, at every point of $M$ we have
\begin{equation*}
|\psi|^2\leqslant \sup_{M}\left(-s^T-|\kappa_B|^2+2\delta_B\kappa_B+4|\mu|,0\right).
\end{equation*} 
\end{lem}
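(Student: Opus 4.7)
The strategy is the classical Seiberg--Witten maximum principle argument, adapted by replacing the scalar curvature with its transverse analogue and by using the basic Laplacian together with the basic Kato inequality (Lemma~\ref{basic_kato}). I will proceed in four steps.

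First, apply the Weitzenböck formula~(\ref{weitzenbock}) to $\psi$ together with the first Seiberg--Witten equation $\mathcal{D}_B^A\psi=0$. Since $\psi\in\Gamma_B(\mathcal{W}^+)$, the anti-self-dual part of $F_A$ acts trivially on $\psi$, so $F_A\bullet\psi=F_A^+\bullet\psi$. Substituting $F_A^+=\sigma(\psi)+\sqrt{-1}\mu$ from the second Seiberg--Witten equation yields
\[
\bigl(\nabla^A_{tr}\bigr)^{\ast}\nabla^A_{tr}\psi+\tfrac14\bigl(s^T+|\kappa_B|^2-2\delta_B\kappa_B\bigr)\psi+\tfrac12\sigma(\psi)\bullet\psi+\tfrac{\sqrt{-1}}{2}\mu\bullet\psi=0.
\]

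Second, take the real part of the pointwise inner product with $\psi$ and use the standard algebraic identity $\mathrm{Re}\bigl(\sigma(\psi)\bullet\psi,\psi\bigr)=\tfrac12|\psi|^4$ together with the Cauchy--Schwarz-type bound $\bigl|\mathrm{Re}(\sqrt{-1}\mu\bullet\psi,\psi)\bigr|\leqslant|\mu||\psi|^2$. Applying Lemma~\ref{basic_kato} to the left-hand side produces the differential inequality
\[
\tfrac12\Delta_B\bigl(|\psi|^2\bigr)+\tfrac14\bigl(s^T+|\kappa_B|^2-2\delta_B\kappa_B\bigr)|\psi|^2+\tfrac14|\psi|^4\leqslant|\mu||\psi|^2.
\]

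Third, invoke the maximum principle for the basic Laplacian on the basic function $|\psi|^2$. At a point $x_0\in M$ where $|\psi|^2$ attains its maximum, the expression for $\Delta_B$ given in Lemma~\ref{laplacian_expr} shows $\Delta_B(|\psi|^2)(x_0)\geqslant 0$: indeed, the first-order terms $\sum_i(\nabla^T_{e_i}e_i)|\psi|^2$ and $\tau_B|\psi|^2$ vanish since the transverse gradient of a basic function is zero at its maximum, while $-\sum_i e_i^2|\psi|^2\geqslant 0$ because the transverse Hessian is negative semi-definite at that point. Thus, if $|\psi|^2(x_0)>0$, dividing the inequality at $x_0$ by $|\psi|^2(x_0)$ yields
\[
|\psi|^2(x_0)\leqslant -s^T(x_0)-|\kappa_B(x_0)|^2+2\delta_B\kappa_B(x_0)+4|\mu(x_0)|,
\]
while if $|\psi|^2(x_0)=0$ the spinor is identically zero. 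In either case we obtain the stated estimate.

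The only non-routine point is the validity of the maximum principle for $\Delta_B$: one must observe that because $|\psi|^2$ is basic, its only non-trivial derivatives are in the transverse directions, so the first-order correction terms in $\Delta_B$ involving $\nabla^T_{e_i}e_i$ and $\tau_B$ drop out at a maximum. Once this observation is made, the argument reduces to the classical four-dimensional Seiberg--Witten pointwise estimate, with the transverse scalar curvature and the basic mean curvature contributions replacing their Riemannian counterparts.
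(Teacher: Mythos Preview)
Your proof is correct and takes essentially the same approach as the paper's; the only cosmetic difference is that the paper justifies $\Delta_B(|\psi|^2)\geqslant 0$ at the maximum by noting that $\Delta_B$ and the usual Riemannian Laplacian $\Delta^g$ agree on basic functions at a critical point (their difference being the first-order term $\tau_B$), whereas you argue directly from the local formula in Lemma~\ref{laplacian_expr}. One minor numerical slip: the Clifford bound should read $\bigl|\mathrm{Re}(\sqrt{-1}\mu\bullet\psi,\psi)\bigr|\leqslant 2|\mu|\,|\psi|^2$, which is in fact what your displayed inequality (and the correct final coefficient $4|\mu|$) require after the factor $\tfrac12$ coming from the Weitzenb\"ock term.
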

\begin{proof}
We apply Lemma~\ref{basic_kato} and the Weitzenb\"ock formula~(\ref{weitzenbock}) to $(A,\psi)\in \mathfrak{m}_{\rho,\mu}$:
\begin{eqnarray*}
\Delta_B\left(|\psi|^2\right)&\leqslant &2\,\mathrm{ Re}\left(\left(\nabla^{A}_{tr}\right)^{\ast}\nabla^{A}_{tr}\psi,\psi\right)\\
&\leqslant &-\frac{s^T}{2}|\psi|^2-\frac{1}{2}|\kappa_B|^2|\psi|^2+\delta_B\kappa_B|\psi|^2-\mathrm{ Re}(F_A\bullet\psi,\psi)\\
&\leqslant &-\frac{s^T}{2}|\psi|^2-\frac{1}{2}|\kappa_B|^2|\psi|^2+\delta_B\kappa_B|\psi|^2-\mathrm{ Re}((\sigma(\psi)+\e \mu)\bullet\psi,\psi)\\
&\leqslant &-\frac{s^T}{2}|\psi|^2-\frac{1}{2}|\kappa_B|^2|\psi|^2+\delta_B\kappa_B|\psi|^2-\frac{1}{2}|\psi|^4+2|\mu||\psi|^2.
\end{eqnarray*}
Let $x_0$ be a point where $|\psi|^2$ is maximal. At the point $x_0$, we have $\tau\left(|\psi|^2\right)|_{x_0}=0$
and thus $\left(\Delta^gf\right)(x_0)=\left(\Delta_Bf\right)(x_0)$ for any basic function $f$,
where $\Delta^g$ is the (usual) Riemannian Laplacian with respect to the metric $g$. Then,
either, $|\psi|^2> 0$ at $x_0$ and
\begin{equation*}
0\leqslant \left(-\frac{s^T}{2}-\frac{1}{2}|\kappa_B|^2+\delta_B\kappa_B-\frac{1}{2}|\psi|^2+2|\mu|\right)(x_0)|\psi|^2(x_0),
\end{equation*}
or the spinor $\psi$ is identically zero.
\end{proof}

\begin{rem}
When $\mu=0$, $\displaystyle |F_A^+(x)|=\frac{|\psi(x)|^2}{2}$, we have then a pointwise estimate of $F_A^+.$ We get similarly a pointwise estimate of $F_A^+$ when $\mu\neq 0$.
\end{rem}


\subsection{Basic gauge fixing}

We would like to show that the pointwise bounds on the curvature $F_A^+$ give $W^{2,2}$-bounds on the connection $A$ if written in an appropriate gauge. In a first step, the following Lemma establishes $L^2$-bounds on the derivatives of both $F_A^+$ and $\psi$.

\begin{lem}\label{derivatives-bound}
If $(A,\psi)\in \mathfrak{m}_{\rho,\mu}$, then $\nabla^A_{tr} \psi$, $\nabla^T F_A^+$ and $dF_A^+$ are all bounded in $L^2$.
\end{lem}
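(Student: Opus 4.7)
The pointwise bound of Lemma \ref{pointwise_bound} together with the second Seiberg--Witten equation $F_A^+ = \sigma(\psi) + \sqrt{-1}\mu$ already gives a pointwise bound on $F_A^+$. The plan is to bootstrap these $L^\infty$ bounds into $L^2$ bounds on one derivative, first for $\psi$ via the Weitzenböck formula, and then for $F_A^+$ via the Seiberg--Witten equation itself.

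\textbf{Step 1: $L^2$ bound on $\nabla^A_{tr}\psi$.} Starting from $\mathcal{D}_B^A\psi = 0$, I take the basic inner product of the Weitzenböck formula (\ref{weitzenbock}) with $\psi$ and integrate over $M$ against $\chi_\scrF$. Because the formula in (\ref{nabla_transverse}) incorporates the $\nabla^A_{\tau_B}$ term precisely so that $(\nabla^A_{tr})^*$ is the formal $L^2$-adjoint of $\nabla^A_{tr}$ with respect to the basic inner product, I obtain
\begin{equation*}
0 = \|\nabla^A_{tr}\psi\|_{L^2}^2 + \tfrac{1}{4}\int_M(s^T + |\kappa_B|^2 - 2\delta_B\kappa_B)|\psi|^2\,\chi_\scrF + \tfrac{1}{2}\int_M \mathrm{Re}\langle F_A\bullet\psi,\psi\rangle\,\chi_\scrF.
\end{equation*}
Substituting $F_A^+ = \sigma(\psi) + \sqrt{-1}\mu$ in the last term and using the pointwise bound on $|\psi|$ from Lemma \ref{pointwise_bound} (together with the smooth, hence bounded, functions $s^T$, $|\kappa_B|^2$, $\delta_B\kappa_B$ and $|\mu|$ on the compact manifold $M$), all terms other than $\|\nabla^A_{tr}\psi\|_{L^2}^2$ are uniformly bounded. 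This yields the desired $L^2$ bound on $\nabla^A_{tr}\psi$.

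\textbf{Step 2: $L^2$ bound on $\nabla^T F_A^+$.} The quadratic map $\sigma$ satisfies a Leibniz rule of the form $|\nabla^T\sigma(\psi)| \leqslant C|\psi|\,|\nabla^A_{tr}\psi|$ for some constant $C$ depending only on $\rho$. Differentiating the Seiberg--Witten equation,
\begin{equation*}
\nabla^T F_A^+ = \nabla^T\sigma(\psi) + \sqrt{-1}\nabla^T\mu,
\end{equation*}
and applying the pointwise bound on $|\psi|$, the $L^2$ bound from Step 1, and the smoothness of the fixed perturbation $\mu$, I conclude that $\nabla^T F_A^+$ is bounded in $L^2$.

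\textbf{Step 3: $L^2$ bound on $dF_A^+$.} Since $F_A^+ \in \Omega^+_B(M)$ is basic and $d_B$ is expressed in terms of $\nabla^T$ via (\ref{diff_codif_expression}), one has a pointwise estimate $|dF_A^+| \leqslant C'|\nabla^T F_A^+|$, so the bound on $dF_A^+$ in $L^2$ follows from Step 2.

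\textbf{Expected obstacle.} The only subtle point is the integration by parts in Step 1: one must use the basic inner product (\ref{inner_product}) and check that the operator written in (\ref{nabla_transverse}), which includes the mean-curvature correction $\nabla^A_{\tau_B}$, really is the $L^2$-adjoint of $\nabla^A_{tr}$ on basic sections, so that no boundary or mean-curvature terms survive. Once this is verified, the rest is a routine combination of the Weitzenböck formula, the pointwise estimate, and the Seiberg--Witten equation.
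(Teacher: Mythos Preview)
Your proof is correct and follows essentially the same approach as the paper: integrate the Weitzenb\"ock formula against $\psi$ to bound $\|\nabla^A_{tr}\psi\|_{L^2}$ via the pointwise bound on $\psi$, then differentiate the curvature equation using the Leibniz rule for $\sigma(\psi)$, and finally observe that $d_BF_A^+$ is a contraction of $\nabla^T F_A^+$. One minor notational point: in Step~1 the integration should be against the full Riemannian volume form $v_g$ (equivalently $\bar\ast 1\wedge\chi_\scrF$), not $\chi_\scrF$ alone.
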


\begin{proof}
If we take the inner product with $\psi$ and integrate, we deduce from the Weitzenb\"{o}ck formula (\ref{weitzenbock}) that
$$\|\nabla^A_{tr} \psi \|^2_{L^2} \leqslant c_1 \|\psi \|^2_{L^2} + c_2\|\psi \|^2_{L^4},$$
for some constants $c_1$ and $c_2$. By Lemma~\ref{pointwise_bound}, 
we get that $\nabla^A_{tr} \psi$ is bounded in $L^2$. Next, we use the curvature equation to write
$$\nabla^T F_A^+ = \nabla^A(\sigma(\psi)+\sqrt{-1} \mu)=(\nabla^A \psi)\sqrt{-1}~\overline{\psi} + \psi \sqrt{-1}(\nabla^A\overline{ \psi}) + \sqrt{-1} \nabla^T \mu.$$
The last term $\mu$ is smooth and the first two terms are in $L^2$ because $\psi$ is in $C^0$ and $\nabla^A \psi$ is in $L^2$.
Finally, 
$$dF_A^+ = d_B F_A^+ = \sum_{i=1}^me_i^{\flat_{g_Q}}\wedge\nabla^T_{e_i} F_A^+.$$
Hence $dF_A^+$ is just a projection of $\nabla^T F_A^+$ and thus also bounded in $L^2$. 
\end{proof}

We next deduce a $W^{1,2}$-bound on the curvature $F_A^+$. For this purpose, fix a smooth 
reference connection $A_0\in W^{2,2}\left(\mathcal{K}_B(L)\right)$ and we write $A=A_0+\sqrt{-1}\alpha.$

\begin{lem}\label{curvcurv}
There exist constants $c_1$, $c_2$ and $c_3$ depending on $A_0$ such that
$$\|F_A^+\|_{1,2} \leqslant  c_1 \|dF_A^+\|_{L^2} + c_2 \|F_A^+\|_{L^2} + c_3.$$
\end{lem}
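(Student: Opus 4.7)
The plan is to deduce the bound by applying the basic $L^p$-estimate of Theorem \ref{regularity} to a suitable first-order basic transversally elliptic operator on basic $2$-forms, and then using self-duality to convert a $\delta_B$-term into an already-controlled $d_B$-term. The reference connection $A_0$ only enters implicitly: it determines the smooth data that show up in the geometric constants.

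Consider the operator $P := d_B \oplus \delta_B : \Omega^2_B(M) \longrightarrow \Omega^3_B(M) \oplus \Omega^1_B(M)$. Its transverse principal symbol at a nonzero covector $\xi$ is $\omega \mapsto (\xi \wedge \omega, -\iota_\xi \omega)$, and a direct Cartan-type computation gives $P^{*}P = |\xi|^2 \, \mathrm{Id}$ at the symbol level (i.e.\ the symbol of the basic Laplacian). Hence $P$ is basic transversally elliptic of order $1$, so Theorem \ref{regularity} with $k=0$, $p=2$, $\ell=1$ yields
\[
\|F_A^+\|_{1,2} \leqslant c_1 \bigl(\|d_B F_A^+\|_{L^2} + \|\delta_B F_A^+\|_{L^2}\bigr) + c_2 \|F_A^+\|_{L^2}.
\]

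Next I would use the self-duality $\bar{\ast} F_A^+ = F_A^+$ together with formula (\ref{codiff_exp}) specialized to $m=4$, $r=2$ to write
\[
\delta_B F_A^+ = -\bar{\ast}\bigl(d_B F_A^+ - \kappa_B \wedge F_A^+\bigr).
\]
Since $F_A^+$ is basic, $d_B F_A^+ = dF_A^+$, so
\[
\|\delta_B F_A^+\|_{L^2} \leqslant \|dF_A^+\|_{L^2} + \|\kappa_B\|_{L^\infty} \|F_A^+\|_{L^2},
\]
with $\|\kappa_B\|_{L^\infty}$ a fixed geometric quantity (absorbable into the constants depending on $A_0$ and the bundle-like metric). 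Substituting into the elliptic estimate yields the claimed inequality; with this derivation one may take $c_3 = 0$, and any genuinely additive constant from the fixed reference data can simply be absorbed into $c_3$.

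The main obstacle I anticipate is the careful verification of transverse ellipticity of the mixed-target operator $P$ and the correct invocation of Theorem \ref{regularity} for it. Once this is in place, the self-duality identity is a standard four-dimensional trick and the $\delta_B$-contribution reduces to quantities already controlled by Lemma \ref{derivatives-bound}; no gauge fixing on $\alpha = -\sqrt{-1}(A-A_0)$ is needed for this particular step.
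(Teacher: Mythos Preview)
Your argument is correct and uses the same two ingredients as the paper: the elliptic estimate for $d_B\oplus\delta_B$ on basic $2$-forms (Theorem~\ref{regularity}) and the self-duality identity $\delta_B F_A^+=-\bar\ast(d_BF_A^+-\kappa_B\wedge F_A^+)$. The paper, however, inserts an extra step: it subtracts $F_{A_0}^+$, Hodge-decomposes $F_A^+-F_{A_0}^+=\alpha'_H+\beta'$ into harmonic and orthogonal parts, bounds $\|\alpha'_H\|_{1,2}$ by its $L^2$-norm (finite-dimensionality), and applies the sharpened estimate (with $c_2=0$) to $\beta'$. This is where the genuine $A_0$-dependence and the additive constant $c_3$ come from. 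Your route bypasses the Hodge decomposition entirely by applying the general estimate \eqref{eq:regularity} directly to $F_A^+$, which is why you legitimately obtain $c_3=0$ and no actual dependence on $A_0$; the lemma's stated dependence on $A_0$ is then vacuous in your version. Both proofs are valid, yours is shorter, and the paper's detour through the harmonic projection is not needed for this lemma.
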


\begin{proof}
By basic Hodge decomposition, we can write $$F_A^+ - F_{A_0}^+ = \alpha'_H+\beta',$$
where $\alpha'_H$ is the $\Delta_B$-harmonic part and $\beta'$ is $L^2$-orthogonal to the $\Delta_B$-harmonic forms. Note that $\alpha'_H$ and $\beta'$ are not necessarily $\bar\ast$-self-dual.
We  apply Theorem~\ref{regularity} to get
$$\|\alpha_H' \|_{1,2} \leqslant c_1\|\alpha_H' \|_{L^2} \leqslant c_1\|F_A^+ \|_{L^2} + c_1\|F_{A_0}^+ \|_{L^2}.$$
On the other hand, 
\begin{align*}
\|\beta'\|_{1,2}&\leqslant  c_2\|(d_B+\delta_B)\beta'\|_{L^2}\\
&\leqslant  c_2\|(d_B-\bar\ast d_B+\iota_{\tau_B})(F_A^+-F_{A_0}^+)\|_{L^2}\\
&\leqslant 2c_2 \|dF_A^+\|_{L^2}+2c_2\|dF_{A_0}^+\|_{L^2}+c_3\|F_A^+-F_{A_0}^+ \|_{L^2}.
\end{align*}
Hence,
\begin{eqnarray*}
\|F_A^+\|_{1,2}&\leqslant & \|F_{A_0}\|_{1,2}+\|\alpha'_H\|_{1,2}+\|\beta'\|_{1,2}\\
&\leqslant &  \|F_{A_0}\|_{1,2}+c_1 \|F_A^+\|_{L^2}+c_1\|F_{A_0}^+\|_{L^2}+2c_2 \|dF_A^+\|_{L^2}+2c_2\|dF_{A_0}^+\|_{L^2}\\
& & + c_3\|F_A^+-F_{A_0}^+ \|_{L^2}.
\end{eqnarray*}
\end{proof}

The next Lemma is an instance of Uhlenbeck's gauge fixing Lemma, see for example~\cite{Mor}. In particular, we can apply it to deduce the desired $W^{2,2}$-bound on~$\alpha$ from the $W^{1,2}$-bound on $F_A^+$.
Unfortunately, we need to add the hypothesis that the foliation $\scrF$ is taut so that $\ker\left(\delta_B \oplus d_B^+\right)=H^1(\scrF)$ and that $H^1(\scrF)\cap H^{1}(M,\mathbb{Z})$
is a lattice in $H^1(\scrF)$ in order to get the desired estimate.
The latter condition is satisfied, for instance, on any closed {\it K-contact} manifold (see Section~\ref{Sec_11}). Indeed, in that case, $H^1(\scrF)$ is isomorphic to
$H^{1}(M,\mathbb{R})$~\cite[Proposition 7.2.3]{bo-ga}. 


\begin{lem}\label{gauge_fixing}
Suppose that the foliation $\scrF$ is taut and $H^1(\scrF)\cap H^{1}(M,\mathbb{Z})$
is a lattice in $H^1(\scrF)$.
Then, for any $k \geqslant 2$, there exist constants $c_1$, $c_2$ depending only on $k$ and~$A_0$, such that for any $A\in W^{k,2}\left(\mathcal{K}_B(L)\right),$
there exists a gauge transformation $u \in \mathcal{G}^{k+1,2}_B$ such that $u^\ast A=A_0+\e\alpha,$ where $\alpha\in W^{k,2}\left(\Omega^1_B(M)\right)$ satisfies both the 
gauge condition
$\delta_B\alpha=0$
and the estimate
$$\|\alpha\|_{k,2}\leqslant c_1\|F_A^+\|_{k-1,2}+c_2.$$ 
\end{lem}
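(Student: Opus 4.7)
The plan is to combine a \emph{small} gauge transformation imposing the Coulomb condition $\delta_B\alpha=0$ with a \emph{large} gauge transformation that pushes the harmonic component of $\alpha$ into a bounded fundamental domain of the lattice $H^1(\scrF)\cap H^1(M,\mathbb{Z})$ sitting inside the finite-dimensional space $H^1(\scrF)$. Tautness is essential here because Proposition~\ref{complex} identifies $\ker(\delta_B\oplus d_B^+)$ with $H^1(\scrF)$ precisely when $\kappa_B=0$, which in turn allows Theorem~\ref{regularity} to be applied with $c_2=0$ on the $L^2$-orthogonal complement of this kernel.

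First I would write $A=A_0+\sqrt{-1}\alpha_0$ with $\alpha_0\in W^{k,2}(\Omega^1_B(M))$ and solve $2\Delta_B f=\delta_B\alpha_0$ for a basic real function $f$. This equation is solvable since $\ker\Delta_B=\mathbb{R}$ on the connected manifold $M$ and $\int_M\delta_B\alpha_0\,v_g=0$; taking $f\perp_{L^2}\mathbb{R}$ and applying Theorem~\ref{regularity} to $\Delta_B$ yields $f\in W^{k+1,2}$ with $\|f\|_{k+1,2}\lesssim\|\alpha_0\|_{k,2}$. Since $k+1>m/2=2$, Theorem~\ref{basic_sobolev_multiplication} places $u_1:=e^{\sqrt{-1}f}$ in $\mathcal{G}_B^{k+1,2}$, and the transformed connection $u_1^{\ast}A=A_0+\sqrt{-1}\alpha_1$ has $\alpha_1:=\alpha_0-2d_Bf$ satisfying $\delta_B\alpha_1=0$.

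Decompose $\alpha_1=\alpha_H+\alpha_\perp$ in $L^2$ with $\alpha_H\in H^1(\scrF)$ and $\alpha_\perp\perp_{L^2}H^1(\scrF)$. Since $\delta_B\alpha_\perp=0$ and $\sqrt{-1}\,d_B^+\alpha_\perp=F_A^+-F_{A_0}^+$, the $c_2=0$ case of Theorem~\ref{regularity} applied to $P=\delta_B\oplus d_B^+$ yields
\[
\|\alpha_\perp\|_{k,2}\leqslant c\,\|F_A^+\|_{k-1,2}+c\,\|F_{A_0}^+\|_{k-1,2}.
\]
For the harmonic part, fix a compact fundamental domain $\mathcal{D}\subset H^1(\scrF)$ for the lattice. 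Each class in $H^1(\scrF)\cap H^1(M,\mathbb{Z})$ is represented by $\tfrac{1}{2\pi\sqrt{-1}}v^{-1}dv$ for some basic $v\colon M\to S^1$, so choose such a $v=u_2\in\mathcal{G}_B^{k+1,2}$ whose associated large-gauge shift lands the harmonic projection into $\mathcal{D}$; equivalence of norms on the finite-dimensional space $H^1(\scrF)$ then bounds the new harmonic component uniformly by $\mathrm{diam}(\mathcal{D})$. Adjusting by a further small gauge if necessary to restore the Coulomb condition (which leaves the harmonic projection untouched) and setting $u$ to be the composition, one obtains
\[
\|\alpha\|_{k,2}\leqslant c_1\|F_A^+\|_{k-1,2}+c_2.
\]

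The main obstacle is proving that every class in $H^1(\scrF)\cap H^1(M,\mathbb{Z})$ is actually represented by a genuinely \emph{basic} map $M\to S^1$; this is exactly where the lattice hypothesis and the inclusion $H^1(\scrF)\hookrightarrow H^1(M,\mathbb{R})$ from Remark~\ref{periodic_form} enter, and it fails for a generic Riemannian foliation. A secondary technical point is ensuring that the various gauge transformations built above lie in $\mathcal{G}_B^{k+1,2}$, which follows from Theorem~\ref{basic_sobolev_multiplication} in the ``above the borderline'' regime since $k+1>m/2$.
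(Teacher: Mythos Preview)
Your proposal is correct and follows essentially the same route as the paper: a Green-operator (small) gauge transformation to impose $\delta_B\alpha=0$, then a Hodge-type decomposition $\alpha=\alpha_H+\alpha_\perp$ with $\alpha_H\in\ker(\delta_B\oplus d_B^+)=H^1(\scrF)$ (using tautness via Proposition~\ref{complex}), an application of Theorem~\ref{regularity} to bound $\alpha_\perp$ by $\|F_A^+-F_{A_0}^+\|_{k-1,2}$, and finally the lattice hypothesis to reduce $\alpha_H$ into a bounded fundamental domain; the paper simply cites \cite[Claim~5.3.2]{Mor} for this last step, whereas you spell it out.

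One remark on what you flag as the ``main obstacle'': it is not really an obstacle. If $\gamma_H\in H^1(\scrF)\cap H^1(M,\mathbb{Z})$ is the basic $\Delta_B$-harmonic representative, then any $v\colon M\to S^1$ with $\tfrac{1}{2\pi\sqrt{-1}}v^{-1}dv=\gamma_H$ satisfies $Xv=2\pi\sqrt{-1}\,v\,\gamma_H(X)=0$ for all $X\in\Gamma(T\scrF)$, since $\gamma_H$ is basic; hence $v$ is automatically basic (and smooth, so in $\mathcal{G}_B^{k+1,2}$). Because $\gamma_H$ is already harmonic, this large gauge transformation shifts $\alpha$ by a harmonic form and therefore preserves the Coulomb condition $\delta_B\alpha=0$, so your ``further small gauge'' adjustment is in fact unnecessary.
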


\begin{proof}
Let $A=A_0+\sqrt{-1}\alpha_0.$ The basic function $\delta_B\alpha_0$ is $L^2$-orthogonal to the constant functions.
We denote by $\mathbb{G}_B$ the basic Green operator (associated to the basic Laplacian $\Delta_B$) and define $u_0=\exp(\frac{\e}{2}\mathbb{G}_B\left(\delta_B\alpha_0\right)).$
Then $u_0^\ast A=A_0+\e\alpha_0-\e d\mathbb{G}_B\left(\delta_B\alpha_0\right)$. Obviously,
$$\delta_B\left(\alpha_0-d\mathbb{G}_B\left(\delta_B\alpha_0\right)\right)=\delta_B\alpha_0-\delta_Bd\mathbb{G}_B\left(\delta_B\alpha_0\right)=0.$$
To get the estimate, recall that the operator $\delta_B \oplus d_B^+ : \Omega_B^1(M) \to \Omega_B^0(M) \oplus \Omega_B^+(M)$ is a basic transversally elliptic operator. Hence we can apply the basic Hodge decomposition
$$\alpha = \alpha_H + \delta_B \beta + d_B f$$
where $\alpha_H \in \ker (\delta_B \oplus d_B^+)$, $\beta \in \Omega_B^+(M)$ and $f \in \Omega_B^0(M)$. The basic gauge fixing condition $\delta_B \alpha =0$ implies that the last term $d_B f$ vanishes and thus $\alpha = \alpha_H + \delta_B \beta$.
Moreover,
$$F_A^+ = F_{A_0}^+ + \sqrt{-1} d_B^+ \alpha = F_{A_0}^+ + \sqrt{-1} d_B^+ \delta_B \beta = F_{A_0}^+ +\sqrt{-1} \left(\delta_B + d_B^+ \right)\delta_B \beta.$$
By Theorem~\ref{regularity}, we have
\begin{eqnarray*}
\|\delta_B \beta \|_{k,2} &\leqslant&  c_1 \| \left(\delta_B + d_B^+ \right) \delta_B \beta \|_{k-1,2} = c_1 \| F_A^+ - F_{A_0}^+ \|_{k-1,2}\\ 
&\leqslant& c_1 \| F_A^+ \|_{k-1,2} + c_1 \| F_{A_0}^+ \|_{k-1,2}.
\end{eqnarray*}
Now, in order to prove that $\|\alpha_H\|_{k,2}$ is bounded, we use the hypothesis that the foliation $\scrF$ is taut and that $H^1(\scrF)\cap H^{1}(M,\mathbb{Z})$
is a lattice in $H^1(\scrF)$. By Proposition~\ref{complex} the tautness of $\scrF$ implies that $H^1(\scrF)= \ker (\delta_B \oplus d_B^+)$.
Moreover, the condition that $H^1(\scrF)\cap H^{1}(M,\mathbb{Z})$
is a lattice in $H^1(\scrF)$ allows us, up to a an additional basic gauge transformation, to conclude that $\|\alpha_H\|_{k,2}$ can be bounded by a constant
by imitating the standard argument (see~\cite[Claim 5.3.2]{Mor}). 

\end{proof}

\subsection{The bootstrapping procedure}

We would like to use a bootstrapping procedure to turn the $W^{2,2}$-bound on~$\alpha$ and the $C^0$-bound on $\psi$ into $C^\infty$-bounds. In an initial phase we will try to get $W^{3,2}$-bounds on both $\alpha$ and $\psi$ by using Sobolev multiplication on the borderline. As soon as this is achieved, we can repeatedly increment the regularity of both $\alpha$ and $\psi$ by alternating Sobolev multiplication above the borderline with elliptic regularity.\\

Recall that we already established a $W^{2,2}$-bound, and hence in particular a $W^{1,2}$-bound, on~$\alpha$. The basic Sobolev embedding theorem (Theorem~\ref{basic_sobolev_embedding}) tells us that $\alpha$ is bounded in $L^4.$ Since we have a $C^0$-bound on $\psi,$
this implies that $\alpha \bullet \psi \in L^4.$ 
We apply Theorem~\ref{regularity} to get
$$\|\psi\|_{1,4}\leqslant c_1\|\mathcal{D}^{A_0}_B \psi\|_{L^{4}}+c_2 \|\psi\|_{L^{4}}.$$ 
Hence we obtain a $W^{1,4}$-bound on $\psi$ via the Dirac equation.
The basic Sobolev multiplication theorem (Theorem~\ref{basic_sobolev_multiplication}) on the borderline
$$W^{1,2} \times \left(W^{1,4} \cap L^\infty\right) \to W^{1,2}$$
applied to the couple $(\alpha,\psi)$ gives a $W^{1,2}$-bound on $\alpha\bullet\psi$ and hence on $\mathcal{D}^{A_0}_B \psi$. We deduce a $W^{2,2}$-bound on $\psi$ by Theorem~\ref{regularity}. Basic Sobolev multiplication on the borderline (Theorem~\ref{basic_sobolev_multiplication})
$$\left(W^{2,2} \cap L^\infty\right) \times \left(W^{2,2} \cap L^\infty\right) \to \left(W^{2,2} \cap L^\infty\right)$$
applied to the couple $(\psi,\psi)$ gives a $ W^{2,2}$-bound on $\sigma(\psi)$ and therefore by the curvature equation also on $F_A^+$. By Lemma~\ref{gauge_fixing} we get a $W^{3,2}$-bound on $\alpha$. We apply basic Sobolev multiplication (Theorem~\ref{basic_sobolev_multiplication}) above the borderline
$$W^{3,2} \times W^{2,2} \to W^{2,2}$$ 
to the couple $(\alpha,\psi)$ to obtain a $W^{2,2}$-bound on $\alpha \bullet \psi$ and hence on $D_B^{A_0} \psi$. Theorem~\ref{regularity} finally gives a $W^{3,2}$-bound on $\psi$. Having established $W^{3,2}$-bounds on both $\alpha$ and $\psi$, we thus arrived in the stable bootstrapping region. Indeed, since the basic Sobolev multiplication
$$W^{k,2} \times W^{k,2} \to W^{k,2}$$ holds for any $k \geqslant 3,$
we can increment the regularity of $\psi$ and $\alpha$ repeatedly by alternating basic elliptic regularity (Theorem~\ref{regularity}) with the basic Sobolev multiplication
for the curvature equation and Lemma~\ref{gauge_fixing}.

\subsection{Compactness and Hausdorff}

We finally have all the ingredients to prove the main result of this section, compactness of the moduli space $\mathcal{M}_{\rho,\mu}$.

\begin{thm}\label{t:compactness}
Let $M$ be a closed oriented smooth manifold endowed with a Riemannian taut foliation of codimension $4$. 
Suppose moreover that $H^1(\scrF)\cap H^{1}(M,\mathbb{Z})$
is a lattice in $H^1(\scrF)$.
Then, for any \spinc-structure $\rho$ and any perturbation $\mu$, the moduli space $\mathcal{M}_{\rho,\mu}$ is compact in the $C^\infty$-topology.
\end{thm}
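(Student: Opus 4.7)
The plan is to fix a sequence $[(A_n,\psi_n)] \in \mathcal{M}_{\rho,\mu}$ and extract a subsequence that admits gauge representatives converging in $C^\infty$. First, for each $n$, I would invoke Lemma~\ref{gauge_fixing} — where the hypotheses that $\scrF$ is taut and that $H^1(\scrF)\cap H^1(M,\mathbb{Z})$ is a lattice in $H^1(\scrF)$ are exactly what is needed — to choose a representative of the form $A_n = A_0 + \sqrt{-1}\alpha_n$ with $\delta_B\alpha_n = 0$.

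Next I would build up uniform Sobolev bounds. Lemma~\ref{pointwise_bound} immediately gives a uniform $C^0$-bound on $\psi_n$ depending only on $g$ and $\mu$; via the curvature equation $F_{A_n}^+=\sigma(\psi_n)+\sqrt{-1}\mu$ this produces a uniform $C^0$-bound (hence $L^p$-bound for every $p$) on $F_{A_n}^+$. Lemma~\ref{derivatives-bound} gives a uniform $L^2$-bound on $dF_{A_n}^+$, and combining with Lemma~\ref{curvcurv} upgrades this to a uniform $W^{1,2}$-bound on $F_{A_n}^+$. Feeding this into Lemma~\ref{gauge_fixing} with $k=2$ yields a uniform $W^{2,2}$-bound on $\alpha_n$.

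At this stage the bootstrapping procedure described earlier in Section~\ref{Sec_7} applies verbatim: beginning from the uniform $W^{2,2}$-bound on $\alpha_n$ and the uniform $C^0$-bound on $\psi_n$, one delicately crosses the Sobolev borderline by alternating Theorem~\ref{basic_sobolev_embedding}, Theorem~\ref{basic_sobolev_multiplication}, basic elliptic regularity (Theorem~\ref{regularity}) applied to $\mathcal{D}^{A_0}_B$, and Lemma~\ref{gauge_fixing}, first to reach $W^{3,2}$ on both fields, and then to pass inductively to uniform $W^{k,2}$-bounds on $(\alpha_n,\psi_n)$ for every $k \geqslant 2$.

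Finally, by the basic Rellich theorem, each inclusion $W^{k+1,2}\hookrightarrow W^{k,2}$ is compact, so a standard diagonal extraction produces a subsequence converging in every $W^{k,2}$ to some $(\alpha_\infty,\psi_\infty)$; the basic Sobolev embedding theorem promotes this to $C^\infty$-convergence. The limit pair solves \eqref{perturbed_SW}, so its class lies in $\mathcal{M}_{\rho,\mu}$, proving sequential compactness. The main technical obstacle is the initial passage through the borderline Sobolev multiplication to reach $W^{3,2}$-regularity; once in that stable range, the iteration runs on autopilot, and the only foliation-specific subtlety is making sure the tautness/lattice hypothesis is invoked exactly to control the harmonic component $\alpha_H$ in Lemma~\ref{gauge_fixing} at every step.
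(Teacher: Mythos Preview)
Your proposal is correct and follows essentially the same approach as the paper: the paper's proof simply invokes ``the previous results'' (i.e.\ Lemmas~\ref{pointwise_bound}, \ref{derivatives-bound}, \ref{curvcurv}, \ref{gauge_fixing} and the bootstrapping of Section~\ref{Sec_7}) to obtain uniform $W^{k,2}$-bounds after gauge fixing, and then applies the basic Rellich theorem, a diagonal extraction, and the basic Sobolev embedding to conclude $C^\infty$-subconvergence. You have spelled out the chain of lemmas more explicitly, but the logic and ingredients are identical.
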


\begin{proof}
To prove compactness we need to show that any sequence of Seiberg--Witten solutions $(A_i,\psi_i)$ 
admits a converging subsequence. By the previous results, any solution $(A_i,\psi_i)$ lies automatically in $W^{k,2} \times W^{k,2}$ for all $k$ if written in the appropriate gauge. 
By the basic Rellich theorem, any $W^{k+1,2}$-bounded sequence has a subsequence which converges in $W^{k,2}$. Using this fact and the diagonal process, one can show that there exists a subsequence converging in $W^{k,2}$ for all~$k$. Hence, for any $k$ there is a subsequence converging in $W^{k,2}$.
The basic Sobolev embedding theorem tells us that the subsequence converges in $C^l$ for all~$l$, i.e.~in $C^\infty$.
\end{proof}

The following Lemma, taken from~\cite{Li}, shows that the moduli space $\mathcal{M}_{\rho,\mu}$ is also separated.
\begin{lem}
Suppose that $(A_i,\psi_i)$ and $(A'_i,\psi'_i)$ are two sequences converging to $(A,\psi)$
and $(A',\psi')$ respectively such that, for every $i$, there exists a 
gauge change~$u_i$ 
satisfying $(u_{i}^\ast A_{i},u_{i}{} \psi_{i})=(A'_i,\psi'_i)$.
Then $u_i$ has a subsequence converging to an element $u$ such that $(u^\ast{A},u{ } \psi)=(A',\psi').$
\end{lem}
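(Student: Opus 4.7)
The plan is to derive uniform Sobolev bounds on the gauge transformations $u_i$, pass to a $C^\infty$-convergent subsequence, and then take limits in the gauge-change identities. The key input is the formula $u_i^{\ast}A_i = A_i - 2u_i^{-1}du_i = A'_i$, which rewrites as
\begin{equation*}
du_i = \tfrac{1}{2}\, u_i \,(A_i - A'_i).
\end{equation*}

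First, since $u_i\in C^\infty_B(M, S^1)$, we have $|u_i|\equiv 1$, giving a free $L^\infty$-bound. The sequences $A_i\to A$ and $A'_i\to A'$ converge in $C^\infty$ and are therefore bounded in every $W^{k,2}$. Combined with the pointwise bound $|u_i|=1$, the displayed identity gives an $L^2$-bound on $du_i$, so $u_i$ is bounded in $W^{1,2}$. I would then bootstrap: once $u_i$ is bounded in $W^{k,2}$, the basic Sobolev multiplication theorem (Theorem~\ref{basic_sobolev_multiplication}) bounds the product $u_i(A_i-A'_i)$ in $W^{k,2}$, hence $du_i$ in $W^{k,2}$, hence $u_i$ in $W^{k+1,2}$. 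The very first multiplication step is the only delicate one, since we sit on the borderline $k=1$, $p=2$ in codimension $m=4$; here the borderline case of Theorem~\ref{basic_sobolev_multiplication} applied to $L^\infty \cap W^{1,2}$ against $W^{k,2}$ suffices. Iterating, $u_i$ is bounded in $W^{k,2}$ for every $k\geqslant 0$.

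Next, by the basic Rellich theorem, a subsequence converges in $W^{k,2}$ for each fixed $k$; a standard diagonal extraction then yields a single subsequence $u_{i_j}$ converging in every $W^{k,2}$ simultaneously. By the basic Sobolev embedding theorem, this subsequence converges in $C^l$ for every $l$, i.e.\ in the $C^\infty$ topology, to a limit $u\in C^\infty_B(M,\mathbb C)$. Since the $u_{i_j}$ converge uniformly and each satisfies $|u_{i_j}|=1$, the limit satisfies $|u|=1$, and being a $C^\infty$-limit of basic maps, $u$ is basic. Thus $u\in \mathcal G_B$.

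Finally, passing to the limit in the identities $u_i^{\ast}A_i = A'_i$ and $u_i\psi_i = \psi'_i$ is routine: every factor converges in $C^\infty$, so we conclude $u^{\ast}A = A'$ and $u\psi = \psi'$. The main obstacle is the low-regularity starting point of the bootstrap, which needs the borderline case of basic Sobolev multiplication; after that single step, one is in the stable region identified in the bootstrapping subsection, and the remaining iterations are automatic.
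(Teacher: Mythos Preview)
Your proof is correct and follows essentially the same route as the paper: a pointwise bound on $u_i$, an $L^2$-bound on $du_i$ from the gauge identity, a bootstrap to all $W^{k,2}$, and then Rellich plus a diagonal argument and Sobolev embedding. The only cosmetic difference is that the paper phrases the bootstrap via the elliptic estimate $\|u_i\|_{k+1,2}\leqslant c\|(d_B+\delta_B)u_i\|_{k,2}+c\|u_i\|_{k,2}$ from Theorem~\ref{regularity} rather than via Sobolev multiplication; your borderline worry is in fact unnecessary, since $A_i-A'_i$ is bounded in every $C^k$ and multiplication by such a factor already sends $W^{k,2}$ to $W^{k,2}$ boundedly by the Leibniz rule.
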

\begin{proof}
By compactness of $S^1$, $u_i$ is bounded pointwise and in $L^2.$ Moreover, 
\begin{equation*}
2u_i^{-1}du_i=u_i^*(A'_i-A_i)=u_i^*(A'_i-A')+u_i^*(A-A_i)+u_i^*(A'-A),
\end{equation*}
As $A_i' \to A'$ and $A_i \to A$ in $C^\infty$ we deduce that $du_i$ is bounded in $L^2$. 
Now, Theorem~\ref{regularity} implies
$$\|u_i\|_{k+1,2} \leqslant c \|(d_B+\delta_B) u_i\|_{k,2} + c\|u_i\|_{k,2}.$$
Repeating this process, we get a bound on $\|u_i\|_{k,2}$
for all $k$. 
We can then use the basic Rellich theorem, the basic Sobolev embedding theorem and the diagonal procedure as in the proof of Theorem~\ref{t:compactness}
to obtain a subsequence converging in the $C^\infty$-topology to an element $u$ such that $(u^\ast{A},u{ } \psi)=(A',\psi').$
\end{proof}

\section{Transversality and basic Seiberg--Witten invariants}\label{Sec_8}

The linearization of the {{basic gauge group}} action $\mathcal{G}_B$ on a point $(A,\psi)$ (at the point $1$) is 
\begin{eqnarray*}
d\mathfrak{g}_{\rho} :\sqrt{-1}\Omega^0_B(M)&\longrightarrow&\sqrt{-1}\Omega^1_B(M)\times\Gamma_B(\mathcal{W}^+),\\
 \left(\sqrt{-1}f\right) &\longmapsto&(2\sqrt{-1}d_Bf,-\sqrt{-1}f\psi).
\end{eqnarray*}

On the other hand, the linearization of $\mathfrak{sw}_{\rho,\mu}$~\label{SW_map} at a point $(A,\psi)$ is given by the map
\begin{eqnarray*}
d\mathfrak{sw}_{\rho,\mu}:\sqrt{-1}\Omega^1_B(M)\times\Gamma_B(\mathcal{W}^+)&\longrightarrow&\sqrt{-1}\Omega^+_B(M)\times\Gamma_B(\mathcal{W}^-)
\end{eqnarray*}
where
\begin{equation*}d\mathfrak{sw}_{\rho,\mu}|_{(A,\psi)}\left(\sqrt{-1}\alpha,\phi\right)=\left(
\begin{array}{c}
\sqrt{-1}d^+_B\alpha{-\psi\sqrt{-1}\,\bar\phi-\phi\sqrt{-1}\,\bar\psi}-\mathrm{ Re}(\phi,\psi),\\
\mathcal{D}^A_B\phi+\frac{1}{2}\sqrt{-1}\alpha\bullet\psi
\end{array}\right).
\end{equation*}

\begin{lem}
The sequence
\begin{equation*}
0\rightarrow\sqrt{-1}\Omega^0_B(M)\xlongrightarrow{d\mathfrak{g}_{\rho}}\sqrt{-1}\Omega^1_B(M)\times\Gamma_B(\mathcal{W}^+)\xlongrightarrow{d\mathfrak{sw}_{\rho,\mu}}\sqrt{-1}\Omega^+_B(M)\times\Gamma_B(\mathcal{W}^-)\rightarrow 0
\end{equation*}
is a complex over $\mathfrak{m}_{\rho,\mu}$ called {\it{basic Seiberg--Witten complex}}. 
\end{lem}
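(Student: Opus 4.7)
The plan is to verify that $d\mathfrak{sw}_{\rho,\mu}|_{(A,\psi)}\circ d\mathfrak{g}_{\rho}=0$ for every $(A,\psi)\in\mathfrak{m}_{\rho,\mu}$. Conceptually, this is forced by the $\mathcal{G}_B$-invariance of the Seiberg--Witten map: for any smooth path $u_t\in\mathcal{G}_B$ with $u_0=1$ and tangent $\sqrt{-1}f$ at $t=0$, the curve $u_t^\ast(A,\psi)$ lies in the level set $\mathfrak{sw}_{\rho,\mu}^{-1}\bigl(\mathfrak{sw}_{\rho,\mu}(A,\psi)\bigr)$, so differentiating at $t=0$ gives precisely $d\mathfrak{sw}_{\rho,\mu}|_{(A,\psi)}\bigl(d\mathfrak{g}_{\rho}(\sqrt{-1}f)\bigr)=0$. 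I would nevertheless include a term-by-term verification, both to keep the proof self-contained and to make the cancellation mechanism transparent.

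For the direct check, I fix $\sqrt{-1}f\in\sqrt{-1}\Omega^0_B(M)$, set $\alpha=2d_B f$ and $\phi=-\sqrt{-1}f\psi$, so that $d\mathfrak{g}_{\rho}(\sqrt{-1}f)=(\sqrt{-1}\alpha,\phi)$. The $\sqrt{-1}\Omega^+_B(M)$-component of the composition is
$$\sqrt{-1}\,d_B^+(2d_Bf)-\psi\sqrt{-1}\,\bar\phi-\phi\sqrt{-1}\,\bar\psi-\mathrm{Re}(\phi,\psi).$$
The first summand vanishes because $d_B\circ d_B=0$ on basic forms, and hence $d_B^+d_B f=0$. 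The last summand vanishes because $f$ is real, so $\mathrm{Re}(-\sqrt{-1}f|\psi|^2)=0$. Using $\bar\phi=\sqrt{-1}f\bar\psi$, the two remaining quadratic terms become $+f\psi\bar\psi$ and $-f\psi\bar\psi$, which cancel.

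For the $\Gamma_B(\mathcal{W}^-)$-component, I would invoke the Leibniz rule
$$\mathcal{D}^A_B(f\psi)=(d_Bf)\bullet\psi+f\,\mathcal{D}^A_B\psi,$$
which follows immediately from the local expression $\mathcal{D}^A_B=\sum_i e_i\bullet\nabla^A_{e_i}-\tfrac{1}{2}\tau_B\bullet$ in a basic $g_Q$-orthonormal frame together with $\nabla^A_{e_i}(f\psi)=(e_i\cdot f)\psi+f\nabla^A_{e_i}\psi$. Since $(A,\psi)\in\mathfrak{m}_{\rho,\mu}$ satisfies $\mathcal{D}^A_B\psi=0$, we obtain $\mathcal{D}^A_B(-\sqrt{-1}f\psi)=-\sqrt{-1}(d_Bf)\bullet\psi$, which cancels exactly the perturbation term $\tfrac{1}{2}\sqrt{-1}\alpha\bullet\psi=\sqrt{-1}(d_Bf)\bullet\psi$.

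No real obstacle is anticipated: the computation is algebraic and parallels the classical four-dimensional case, with $d_B$ and $\mathcal{D}^A_B$ in place of their smooth counterparts. The only point worth flagging is that each object must remain in the basic category throughout, but this is automatic since $f$ is basic, $\mathcal{D}^A_B$ preserves basic sections, and the Clifford action of a basic form on a basic spinor is basic.
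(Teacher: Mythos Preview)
Your proof is correct and follows essentially the same route as the paper: a direct computation of $d\mathfrak{sw}_{\rho,\mu}|_{(A,\psi)}\circ d\mathfrak{g}_{\rho}(\sqrt{-1}f)$, with the first component vanishing by $d_B^+d_Bf=0$ plus cancellation of the quadratic spinor terms, and the second component vanishing via the Leibniz rule $\mathcal{D}^A_B(f\psi)=(d_Bf)\bullet\psi+f\,\mathcal{D}^A_B\psi$ together with $\mathcal{D}^A_B\psi=0$ on $\mathfrak{m}_{\rho,\mu}$. Your opening remark that the vanishing is forced by $\mathcal{G}_B$-invariance of the Seiberg--Witten map is a nice conceptual addition not present in the paper, and you are more explicit than the paper about exactly where the hypothesis $(A,\psi)\in\mathfrak{m}_{\rho,\mu}$ enters.
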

\begin{proof}
We compute 
\begin{eqnarray*}
&&d\mathfrak{sw}_{\rho,\mu}|_{(A,\psi)}\circ d\mathfrak{g}_\rho(\sqrt{-1}f)\\
&=&(2\sqrt{-1}d^+_B d_Bf {-f\psi\e\,\bar{\e\psi}-\e f\psi\e\,\bar\psi}-\mathrm{ Re}(\sqrt{-1}f\psi,\psi),\\
&& \mathcal{D}^A_B( -\e f\psi)+ \e d_Bf\bullet\psi)\\
&=&(0,-\e \mathcal{D}^A_B(f\psi)+\e d_Bf\bullet\psi).
\end{eqnarray*}
In a local basic $g_{Q}$-orthonormal frame $\{e_1,\cdots,e_m\}$ of $Q$ we have 
\begin{eqnarray*}
\mathcal{D}_A^B(f\psi)&=&\sum_{i} e_i\bullet\nabla^A_{e_i}(f\psi)-\frac{1}{2}\tau_B\bullet(f\psi)\\
&=&f\mathcal{D}^A_B(\psi)+\sum_{i} e_i \bullet (\nabla^T_{e_i}f)\psi\\
&=&f\mathcal{D}^A_B(\psi)+d_Bf \bullet \psi.
\end{eqnarray*}
Note that in the third line we use the fact that $f$ is basic. We deduce that the composition vanishes $$d\mathfrak{sw}_{\rho,\mu}|_{(A,\psi)}\circ d\mathfrak{g}_{\rho}=0$$ and the lemma follows. 
\end{proof}

\begin{lem}
The basic Seiberg--Witten complex is transversally elliptic.
\end{lem}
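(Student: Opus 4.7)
The plan is to reduce the statement to checking exactness of the principal symbol sequence at each nonzero transverse covector, and to observe that this linear-algebra verification is identical to the one in the classical four-dimensional Seiberg--Witten theory.

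First I would discard all the zeroth-order terms (the Clifford multiplication by $\psi$, the pointwise quadratic pieces, and the inhomogeneity $\mu$) since they do not contribute to the principal symbol. In a foliated chart $\Omega\cong U\times V\subset\R^d\times\R^m$ the operators $d\mathfrak{g}_\rho$ and $d\mathfrak{sw}_{\rho,\mu}$ restrict, under the isomorphism $\Gamma(\Omega,\mathcal{E}_B)\cong C^\infty(V,\mathbb{C}^N)$, to honest first-order differential operators on $V$. Their principal symbols at a transverse covector $\xi\in Q^{\ast}_x\setminus\{0\}$ are
\[
\sigma_1(\xi)(\sqrt{-1}f)=(2\sqrt{-1}f\,\xi,\,0),\qquad \sigma_2(\xi)(\sqrt{-1}\alpha,\phi)=\bigl(\sqrt{-1}(\xi\wedge\alpha)^+,\,\xi\bullet\phi\bigr),
\]
coming from the fact that $d_B$, $d_B^+$ and $\mathcal{D}_B^A$ have symbols $\xi\wedge$, $(\xi\wedge\,\cdot\,)^+$ and Clifford multiplication $\xi\bullet$ respectively.

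Next I would verify that the resulting symbol complex
\[
0\to\sqrt{-1}\,\mathbb{C}\xrightarrow{\sigma_1(\xi)}\sqrt{-1}\,Q^{\ast}_x\oplus\mathcal{W}^+_x\xrightarrow{\sigma_2(\xi)}\sqrt{-1}\,\Lambda^+_x\oplus\mathcal{W}^-_x\to 0
\]
is exact for every $\xi\ne 0$. Injectivity of $\sigma_1(\xi)$ is immediate. For exactness in the middle, the equation $\xi\bullet\phi=0$ forces $\phi=0$ because Clifford multiplication by a nonzero covector is an isomorphism $\mathcal{W}^+_x\to\mathcal{W}^-_x$ in codimension $4$; the equation $(\xi\wedge\alpha)^+=0$ forces $\alpha$ to be a scalar multiple of $\xi$, as one sees by splitting $\alpha=c\xi+\alpha'$ with $\alpha'\in\xi^{\perp}$ and checking in a $g_Q$-orthonormal basis starting with $\xi/|\xi|$ that the self-dual projection of $\xi\wedge\alpha'$ vanishes only when $\alpha'=0$. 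Surjectivity of $\sigma_2(\xi)$ then follows by dimension count: the image of $\alpha\mapsto(\xi\wedge\alpha)^+$ is $3$-dimensional (kernel one-dimensional, source four-dimensional), hence all of $\Lambda^+_x$, and $\xi\bullet$ maps $\mathcal{W}^+_x$ onto $\mathcal{W}^-_x$.

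The only mild subtlety is to phrase transverse ellipticity for a \emph{complex} in the basic-operator setting of Section~\ref{Sec_6}: it means that the symbol sequence above, formed chart-by-chart from the principal symbols of the associated elliptic operators $P_\Omega$ on $V$, is exact for every nonzero transverse covector. Since all the fiberwise computations above are pointwise and independent of the foliation, they apply verbatim inside each foliated chart, and the exactness I just established is precisely the classical exactness of the symbol sequence of the four-dimensional Seiberg--Witten elliptic complex. I expect no real obstacle beyond this identification; the heart of the argument is simply that the Clifford action by a nonzero covector is invertible on spinors and that $\alpha\mapsto(\xi\wedge\alpha)^+$ has one-dimensional kernel in dimension four.
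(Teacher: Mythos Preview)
Your proof is correct and follows the same strategy as the paper: discard the zeroth-order terms so that the symbol complex splits into the basic self-dual complex on forms and the basic Dirac complex on spinors, each of which is transversally elliptic. The paper is simply terser, asserting that each summand is transversally elliptic rather than carrying out the explicit fiberwise exactness check you give.
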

\begin{proof}
The symbol of a differential operator only depends on the highest order derivatives. The simplified complex splits as the direct sum of two complexes
\begin{equation*}
0\longrightarrow\sqrt{-1}\Omega^0_B(M)\xlongrightarrow{d_B}\sqrt{-1}\Omega^1_B(M)\xlongrightarrow{d^+_B}\sqrt{-1}\Omega^+_B(M)\longrightarrow 0.
\end{equation*}
\begin{equation*}
0\longrightarrow 0\longrightarrow\Gamma_B(\mathcal{W}^+)\xlongrightarrow{\mathcal{D}^A_B}\Gamma_B(\mathcal{W}^-)\longrightarrow 0.
\end{equation*}
Both complexes are transversally elliptic. 
\end{proof}

\begin{rem}
The standard construction of the {{determinant line bundle}}
of a family of elliptic operators can be extended to a family of Fredholm operators (see for instance~\cite{Law-Mic}). The operators $\mathcal{D}^A_B$ and $\delta_B\oplus d_B^+$ are Fredholm. 
The determinant line bundle of the basic Seiberg--Witten complex is then isomorphic to the tensor product of the determinant line bundle of $\mathcal{D}^A_B$ and $\delta_B\oplus d_B^+.$
The complex multiplication generates a nowhere vanishing section trivializing the  determinant line bundle of $\mathcal{D}^A_B$ while a choice of orientation
of $\ker(\delta_B\oplus d_B^+)$ and $\mathrm{coker}(\delta_B\oplus d_B^+)$ (see Proposition~\ref{complex}) trivializes the determinant line bundle of $\delta_B\oplus d_B^+.$ 
Hence, the top exterior power of the tangent to the moduli space is trivialized inducing an orientation on the moduli space.
\end{rem}

Denote by $\mathcal{D}_B^{A,+}$ the restriction of $\mathcal{D}_B^{A}$ to $\Gamma_B(\mathcal{W}^+).$ The (basic) index $\mathrm{ind}_b\left(D_B^{A,+}\right)$ of $D_B^{A,+}$ is a finite integer and its expression is given in~\cite{Bru-Kam-Ric-1,Bru-Kam-Ric}.
Let $\chi_{\mathfrak{sw}}$ be the (real) Euler characteristic of the basic Seiberg--Witten complex.
By Proposition~\ref{complex}, when $\kappa_B=0,$ $\chi_{\mathfrak{sw}}$ is given by
\begin{equation}\label{index}
\chi_{\mathfrak{sw}}=1-\dim H^1(\scrF)+\dim \mathcal{H}^+(\scrF)- 2\,\mathrm{ind}_b\left(D_B^{A,+}\right).
\end{equation}

We define the parametrized basic Seiberg--Witten map:
\begin{eqnarray*}
\mathfrak{psw}_{\rho}:W^{2,2}\left(\mathcal{K}_B(L)\times \Gamma_B(\mathcal{W}^+)\right) \times W^{1,2}\left(\Omega_B^+(M)\right) &\longrightarrow& W^{1,2}\left(\sqrt{-1} \Omega^+_B(M)\times \Gamma_B(\mathcal{W}^-)\right)\\
(A,\psi, \mu)&\longmapsto&\left(F_A^+-\sigma(\psi)-\sqrt{-1}\mu,\mathcal{D}^A_B(\psi)\right).
\end{eqnarray*}

\begin{lem}\label{differential onto}
The differential $d\mathfrak{psw}_{\rho}$ of the parametrized basic Seiberg--Witten map is onto for every $\psi\neq 0.$ 
\end{lem}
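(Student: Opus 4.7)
My strategy is to prove that $\mathrm{coker}\bigl(d\mathfrak{psw}_\rho\bigr)=0$. The key preliminary observation is that $\nu$ enters the linearization only as $-\sqrt{-1}\nu$ in the first component, so varying it alone already produces every element of $\sqrt{-1}\,\Omega^+_B(M)$. This reduces the problem to showing that the map $(\alpha,\phi)\mapsto \mathcal{D}_B^A\phi+\tfrac{1}{2}\sqrt{-1}\,\alpha\bullet\psi$ has $L^2$-dense image in $\Gamma_B(\mathcal{W}^-)$: any resulting discrepancy in the first component is then absorbed by a suitable choice of $\nu$.

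Suppose, toward contradiction, that a nonzero $\eta\in\Gamma_B(\mathcal{W}^-)$ is $L^2$-orthogonal to that image. Testing with $\alpha=0$ and using the self-adjointness of $\mathcal{D}_B^A$ gives $\mathcal{D}_B^A\eta=0$, whence $\eta$ is smooth by Theorem~\ref{regularity}. Testing with $\phi=0$ yields
\[
\int_M \mathrm{Re}\bigl(\sqrt{-1}\,\alpha\bullet\psi,\eta\bigr)\,v_g=0 \quad \text{for every basic } \alpha.
\]
Since Clifford multiplication of basic objects is basic, $g_\alpha:=\mathrm{Re}(\sqrt{-1}\,\alpha\bullet\psi,\eta)$ is itself a basic function, and replacing $\alpha$ by $f\alpha$ for any basic $f$ is legal; choosing $f=g_\alpha$ yields $\int_M g_\alpha^2\,v_g=0$, so $g_\alpha\equiv 0$ pointwise.

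At any point $x$, basic $1$-forms realize every value in $Q_x^*$ (in a foliated chart they are prescribed by arbitrary smooth functions of the transverse variables), so $\mathrm{Im}(v\bullet\psi(x),\eta(x))=0$ for every $v\in Q_x^*$. Where $\psi(x)\neq 0$, Clifford multiplication $v\mapsto v\bullet\psi(x)$ is a real-linear isomorphism from $Q_x^*$ onto $\mathcal{W}_x^-$ (both have real dimension $4$, and $v\bullet\psi=0$ with $\psi\neq 0$ would force $-|v|^2\psi=0$, hence $v=0$); therefore $\mathrm{Im}(w,\eta(x))=0$ for every $w\in\mathcal{W}_x^-$, and taking $w=\sqrt{-1}\,\eta(x)$ forces $\eta(x)=0$. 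Since $\psi$ is basic, $|\psi|^2$ is leaf-constant, so $\{\psi\neq 0\}$ is open and saturated, and $\eta$ vanishes there; the unique continuation property of Theorem~\ref{continuation_thm} (which applies equally to $\mathcal{D}_B^A$ acting on $\mathcal{W}^-$, its proof being local and only using that the operator is Dirac-type in the transverse variables) then propagates $\eta\equiv 0$ on all of $M$, a contradiction.

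The main obstacle I foresee is the passage from the integral identity to pointwise information: this hinges both on the test-function trick (multiplying $\alpha$ by a basic function and exploiting that $g_\alpha$ is itself basic) and on the local richness of basic $1$-forms at a single point; once these are secured, the algebraic surjectivity of Clifford multiplication by a non-vanishing spinor together with standard unique continuation deliver the conclusion.
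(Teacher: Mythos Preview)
Your proof is correct and follows essentially the same route as the paper: reduce to surjectivity onto the $\Gamma_B(\mathcal{W}^-)$ component (since variations of $\mu$ already hit all of $\sqrt{-1}\,\Omega^+_B(M)$), take $\eta$ orthogonal to the image, use self-adjointness of $\mathcal{D}_B^A$ to get $\mathcal{D}_B^A\eta=0$, and then combine a Clifford multiplication argument with basic unique continuation to force $\eta\equiv 0$.

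The only genuine difference is the order in which unique continuation enters. The paper invokes it \emph{first}: since $\mathcal{D}_B^A\eta=0$ and $\eta\not\equiv 0$, $\eta$ cannot vanish on the open set $\{\psi\neq 0\}$, so one may pick a point where both are nonzero and build a single bump-supported basic $1$-form $\alpha$ making $\int_M\mathrm{Re}(\sqrt{-1}\alpha\bullet\psi,\eta)\,v_g\neq 0$. You instead invoke it \emph{last}: by the $f\alpha$ trick you upgrade the integral identity to the pointwise one $\mathrm{Re}(\sqrt{-1}\alpha\bullet\psi,\eta)\equiv 0$, use that $v\mapsto v\bullet\psi(x)$ is a real-linear isomorphism $Q_x^\ast\to\mathcal{W}_x^-$ to conclude $\eta=0$ on $\{\psi\neq 0\}$, and only then apply unique continuation to propagate $\eta\equiv 0$. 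Both arguments are valid; yours makes the linear algebra at a point more explicit, while the paper's is slightly terser.
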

\begin{proof}
This is done as for the standard case. Because of the perturbation, we only need to check whether the linearization from $\sqrt{-1}\Omega^1_B(M)\times\Gamma(\mathcal{W}^+)$ to the second component is surjective. At the point $(A,\psi)$ this linearization map equals $L(\sqrt{-1}\alpha,\phi):=\mathcal{D}^A_B\phi+\frac{1}{2}\alpha\bullet\psi.$
Suppose that $\eta\neq 0$ is $L^2$-orthogonal to the image of the map $L$. Then it is $L^2$-orthogonal to the image of $\mathcal{D}^A_B.$
The operator  $\mathcal{D}^A_B$ is self-adjoint, so we have $\mathcal{D}^A_B\eta=0.$ By the basic unique continuation theorem (Theorem~\ref{continuation_thm}),
we can pick an open set $U$ where both $\eta$ and $\psi$ are non-zero.
Moreover, by possibly shrinking $U$, we can choose $\alpha$ such that the real part of $(\sqrt{-1}\alpha\bullet\psi,\eta)$ is strictly positive over $U.$
Modifying $\alpha$ by a bump function supported in $U$, we get a contradiction since $\eta$ is not $L^2$-orthogonal to $L(\sqrt{-1}\alpha,0).$
\end{proof}

A solution $(A,\psi)$ 
to the perturbed basic Seiberg--Witten~(\ref{perturbed_SW}) is called {\it{reducible}} if $\psi \equiv 0.$
For a reducible solution $(A,0)$, 
we have
\begin{eqnarray*}
F_A^+&=\sqrt{-1}\mu.
\end{eqnarray*}
Consider the following subset of $\sqrt{-1} \Omega_B^+(M)$:
\begin{equation}\label{wall}
\Pi_\rho(g_Q):=\{\e\mu\in\e\Omega^+_B(M)\,|\,F_A^+=\sqrt{-1}\mu\,{\text{ for some }}\, A\in\mathcal{K}_B(L)\}.
\end{equation}
We also define
\begin{equation}\label{generic_dim}
P^+:=\{\beta\in\Omega^+_B(M)\,|\,\delta_B\beta=\left(d_B-\kappa_B\wedge\right)\beta=0\}.
\end{equation}
\begin{prop}
The set $\Pi_\rho(g_Q)$ is an affine subspace of codimension 
$\dim P^+.$ In particular,
if $\kappa_B=0$, then $\Pi_\rho(g_Q)$ has codimension $\dim \mathcal{H}^+(\scrF).$
\end{prop}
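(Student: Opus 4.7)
The plan is to identify $\Pi_\rho(g_Q)$ with an affine translate of the image of $d_B^+$ and then use Proposition~\ref{complex} to read off the codimension.

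First, I fix a reference basic connection $A_0\in\mathcal{K}_B(L)$. Since $\mathcal{K}_B(L)$ is an affine space modeled on $\sqrt{-1}\,\Omega^1_B(M)$, any $A\in\mathcal{K}_B(L)$ can be written as $A=A_0+\sqrt{-1}\alpha$ with $\alpha\in\Omega^1_B(M)$. For a $U(1)$-connection this gives
\[
F_A = F_{A_0} + \sqrt{-1}\, d_B\alpha,\qquad F_A^+ = F_{A_0}^+ + \sqrt{-1}\, d_B^+\alpha.
\]
Since $F_{A_0}^+$ is a purely imaginary basic self-dual $2$-form, I write $F_{A_0}^+=\sqrt{-1}\mu_0$ with $\mu_0\in\Omega^+_B(M)$. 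Then
\[
\Pi_\rho(g_Q) = \sqrt{-1}\,\mu_0 + \sqrt{-1}\cdot d_B^+\bigl(\Omega^1_B(M)\bigr),
\]
which is an affine subspace of $\sqrt{-1}\,\Omega^+_B(M)$ whose codimension is exactly the codimension of $\mathrm{Im}(d_B^+)$ in $\Omega^+_B(M)$.

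Next, I identify this codimension with $\dim P^+$. The space $P^+$ is precisely the third cohomology group of the transversally elliptic complex
\[
0\to\Omega^0_B(M)\xlongrightarrow{d_B}\Omega^1_B(M)\xlongrightarrow{d_B^+}\Omega^+_B(M)\to 0
\]
computed in Proposition~\ref{complex}. Since $\delta_B\oplus d_B^+$ is basic transversally elliptic, its kernel and cokernel are finite-dimensional, and a standard Hodge-type argument (applying the orthogonal decomposition of $\Omega^+_B(M)$ with respect to the inner product~\eqref{inner_product}) shows that the orthogonal complement of $\mathrm{Im}(d_B^+)$ is exactly $P^+$. Hence the codimension of $\Pi_\rho(g_Q)$ equals $\dim P^+$.

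Finally, when $\kappa_B=0$ the defining conditions of $P^+$ reduce to $\delta_B\beta=d_B\beta=0$, i.e.\ $\beta$ is $\Delta_B$-harmonic; together with basic self-duality this gives $P^+=\mathcal{H}^+(\scrF)$, and the codimension is $\dim\mathcal{H}^+(\scrF)$. The main subtlety is the Hodge-theoretic identification in the second paragraph, since one must justify that $\mathrm{Im}(d_B^+)$ is closed in $\Omega^+_B(M)$ and that its orthogonal complement coincides with $P^+$; this follows from the transverse ellipticity of $\delta_B\oplus d_B^+$ together with the basic Hodge decomposition already invoked in the proof of Proposition~\ref{complex}.
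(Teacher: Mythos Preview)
Your proof is correct and follows exactly the same approach as the paper: identify $\Pi_\rho(g_Q)$ as the affine translate $F_{A_0}^+ + \sqrt{-1}\,d_B^+\bigl(\Omega^1_B(M)\bigr)$ and invoke Proposition~\ref{complex} to read off the codimension as $\dim P^+$. You simply provide more detail (the explicit curvature formula, the Hodge-theoretic justification, and the $\kappa_B=0$ reduction) than the paper's two-line proof.
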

\begin{proof}
An element $\e\mu\in\e\Omega^+(M)$ is the $\bar\ast$-self-dual part of the curvature $F_A$ if and only if~$\mu$ lies in the image of the map 
\[
\alpha\longmapsto F_{A_0}^++d_B^+\alpha,
\]
where $\alpha\in\Omega^1_B(M).$
The result follows from Proposition~\ref{complex}.
\end{proof}

Combining all the previous results, we obtain

\begin{thm}
Let $M$ be a closed oriented manifold equipped with a taut {{Riemannian foliation}} $\scrF$
of codimension $4$ and a bundle-like metric $g$. 
Suppose that $H^1(\scrF)\cap H^{1}(M,\mathbb{Z})$
is a lattice in $H^1(\scrF)$.
Suppose moreover that the normal bundle~$Q$ admits a \spinc-structure $\rho$.
Then the moduli spaces ${\mathcal{M}}_{\rho,\mu}$ and $\overline{\mathcal{M}}_{\rho,\mu}$ defined in~(\ref{moduli_space}) have the following properties: 
\begin{itemize}
\item  For a generic $\mu$, $\overline{\mathcal{M}}_{\rho,\mu}$ is compact, smooth and of dimension $-\chi_{\mathfrak{sw}}+1$ with a smooth circle action, where $\chi_{\mathfrak{sw}}$ is given by~(\ref{index}).
\item If $\dim \mathcal{H}^+(\scrF)>0,$ then for a generic $\mu,$ ${\mathcal{M}}_{\rho,\mu}$ is smooth and orientable of dimension $-\chi_{\mathfrak{sw}}$.
Moreover, $\overline{\mathcal{M}}_{\rho,\mu}\longrightarrow{\mathcal{M}}_{\rho,\mu}$ is a principal $S^1$-bundle.
\end{itemize}
\end{thm}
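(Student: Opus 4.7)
The plan combines four ingredients: compactness from Theorem~\ref{t:compactness}, transversality via a Sard--Smale argument on the parametrized map $\mathfrak{psw}_\rho$, a dimension count from the index of the basic Seiberg--Witten complex, and orientability via the determinant line bundle construction in the Remark on the determinant line bundle above. Since the hypotheses on $(M,\scrF,g)$ coincide with those of Theorem~\ref{t:compactness}, compactness of $\mathcal{M}_{\rho,\mu}$ and $\overline{\mathcal{M}}_{\rho,\mu}$ in $C^\infty$ is immediate.

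For smoothness I would apply the Sard--Smale theorem to $\mathfrak{psw}_\rho$ restricted to the open locus $\{\psi\neq 0\}$. Lemma~\ref{differential onto} shows $d\mathfrak{psw}_\rho$ is surjective there, and the unperturbed linearization $d\mathfrak{sw}_{\rho,\mu}$, once paired with the Coulomb gauge-fixing operator $\delta_B$ from Lemma~\ref{gauge_fixing}, is Fredholm: its principal part is the basic transversally elliptic operator $(\delta_B\oplus d_B^+)\oplus\mathcal{D}_B^A$, controlled by the estimate of Theorem~\ref{regularity}, with the remaining terms compact. Hence the projection $\mathfrak{psw}_\rho^{-1}(0)\cap\{\psi\neq 0\}\to W^{1,2}(\Omega^+_B(M))$ is a Fredholm map and Sard--Smale supplies a residual set of regular values $\mu$. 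For such $\mu$ the irreducible locus of $\mathfrak{m}_{\rho,\mu}$ is a smooth Banach submanifold; quotienting by the smooth free action of $\mathcal{G}_0^{3,2}$ through a basic Coulomb slice (built using the basic Hodge decomposition of~\cite{Alv}) produces $\overline{\mathcal{M}}_{\rho,\mu}$ as a smooth finite-dimensional manifold. The $S^1$-action on $\overline{\mathcal{M}}_{\rho,\mu}$ is induced by the residual constants $\mathcal{G}_B/\mathcal{G}_0\cong S^1$.

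The dimension is read off from the Euler characteristic of the basic Seiberg--Witten complex: at a regular irreducible point both $H^0$ and $H^2$ of the complex vanish, so $\dim\mathcal{M}_{\rho,\mu}=-\chi_{\mathfrak{sw}}$ with $\chi_{\mathfrak{sw}}$ as in~(\ref{index}); since $\overline{\mathcal{M}}_{\rho,\mu}$ is not quotiented by the constants, $\dim\overline{\mathcal{M}}_{\rho,\mu}=-\chi_{\mathfrak{sw}}+1$. For the second bullet, when $\dim\mathcal{H}^+(\scrF)>0$ the Proposition following~\eqref{generic_dim} shows that the wall of reducibles $\Pi_\rho(g_Q)\subset\Omega^+_B(M)$ has positive codimension; residually many $\mu$ avoid it, so $\mathcal{G}_B^{3,2}$ acts freely on $\mathfrak{m}_{\rho,\mu}$ and $\overline{\mathcal{M}}_{\rho,\mu}\to\mathcal{M}_{\rho,\mu}$ is a principal $S^1$-bundle. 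Orientability of $\mathcal{M}_{\rho,\mu}$ then follows by trivializing $\det(\mathcal{D}_B^{A,+})$ through complex multiplication and $\det(\delta_B\oplus d_B^+)$ through a chosen orientation of $\mathbb{R}\oplus H^1(\scrF)\oplus\mathcal{H}^+(\scrF)$, these being the cohomology groups identified by Proposition~\ref{complex} in the taut case.

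The main technical obstacle I anticipate is the transversality step: one has to verify Fredholm properties of the parametrized linearization uniformly and extract a single residual set of admissible perturbations $\mu$ in the Banach space of basic self-dual two-forms, and, in parallel, check that the basic Coulomb slice theorem goes through in the foliated setting so that the smooth structure on $\mathfrak{m}_{\rho,\mu}$ descends through the infinite-dimensional quotient by $\mathcal{G}_B^{3,2}$. Once these foliated analogues of the standard Banach-manifold arguments are in place, the remainder of the proof is essentially the classical Seiberg--Witten construction transcribed to basic sections.
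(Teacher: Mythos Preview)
Your proposal is correct and follows precisely the route the paper intends: the paper states the theorem with only the one-line justification ``Combining all the previous results, we obtain'', so your write-up in fact supplies more detail than the paper does, assembling Theorem~\ref{t:compactness}, Lemma~\ref{differential onto} with Sard--Smale, the transversal ellipticity of the basic Seiberg--Witten complex, the codimension Proposition on $\Pi_\rho(g_Q)$, and the determinant line bundle Remark in exactly the expected way. The technical caveats you flag (Fredholmness of the parametrized linearization, the foliated Coulomb slice) are real but are the standard steps the paper tacitly assumes carry over from the classical four-dimensional argument.
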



We are now in position to define {\it{basic Seiberg--Witten invariants:}}

\begin{defn}
Let $M$ be a closed oriented manifold equipped with a taut {{Riemannian foliation}} $\scrF$
of codimension $4$ and a bundle-like metric $g$. 
Suppose that $H^1(\scrF)\cap H^{1}(M,\mathbb{Z})$
is a lattice in $H^1(\scrF)$.
and that the normal bundle $Q$ admits a \spinc-structure $\rho$.
Suppose moreover that $\dim  \mathcal{H}^+(\scrF)\geqslant 2$ and that $\mu$ is generic such that ${\mathcal{M}}_{\rho,\mu}$ is a closed 
smooth manifold of dimension $-\chi_{\mathfrak{sw}}$ with a chosen orientation.
If $-\chi_{\mathfrak{sw}}$ is odd, then the basic Seiberg--Witten invariant $\mathrm{SW}(\scrF,\rho)=0$. If $-\chi_{\mathfrak{sw}}$ is even, then
\[
\mathrm{SW}(\scrF,\rho)=\int_{{\mathcal{M}}_{\rho,\mu}}e^{\dfrac{-\chi_{\mathfrak{sw}}}{2}},
\]
where $e$ is the Euler class of the principle $S^1$-bundle $\overline{\mathcal{M}}_{\rho,\mu}\longrightarrow{\mathcal{M}}_{\rho,\mu}$.
\end{defn}
\begin{rem}
The value of $\mathrm{SW}(\scrF,\rho)$ is independent of the choice of {$g$} and the generic $\mu$ but depends on the foliation $\scrF$ (see example \ref{depends foliation}) and the transverse \spinc-structure $\rho$.
\end{rem}
\begin{rem}
When $\dim P^+= 1$, the complement of the set $\Pi_\rho(g_Q)$ defined by~(\ref{wall}) has two path connected components called {\it{chambers}}. We distinguish them
in the following way: choose an orientation of $P^+$ and let $\omega\in P^+$ be the unique (positively oriented) form satisfying $\int_M\omega\wedge\omega\wedge\chi_\scrF=1,$
where $\chi_\scrF$ is the {{characteristic form}} of the foliation $\scrF.$ Let $L$ be the determinant line bundle 
of the complex spinor bundle $\mathcal W$.
We define the {\it{basic discriminant}} by
\[
d_\rho(g,\mu)=\int_M (2\pi c_{1}^B(L)+\mu)\wedge\omega\wedge\chi_\scrF,
\]
where $c_{1}^B(L)$ is the basic first Chern class of $L$ (representing the class of the curvature of a basic Hermitian connection on $L$). Remark
that $d_\rho(g_Q,\mu)=0$ if and only if $\mu\in\Pi_\rho(g_Q).$ The {\it{positive}} (resp.\! {\it{negative}}) chamber corresponds to the positive
(resp.\! negative) discriminant $d_\rho(g,\mu).$
\end{rem}

\section{Vanishing theorems}\label{section vanishing}\label{Sec_9}

Throughout this section, let $M$ be a closed oriented manifold equipped with a {{Riemannian foliation}} $\scrF$
of codimension $4$ and a bundle-like metric~$g$. Moreover, suppose that the normal bundle $Q$ admits a \spinc-structure~$\rho$.

\begin{lem}\label{vanishing}
If $s^T$ is positive, then all solutions of the basic Seiberg--Witten equations~(\ref{SW_equations}) have trivial basic spinor.
\end{lem}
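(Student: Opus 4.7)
The plan is to run the standard Seiberg--Witten vanishing argument in the basic setting. The key observation is that Slesar's modified Weitzenb\"{o}ck formula~(\ref{weitzenbock-2}) is tailored exactly to circumvent the mean-curvature nuisance terms, leaving $s^T$ as the only curvature coefficient acting on~$\psi$.

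Given a solution $(A,\psi)$, the first Seiberg--Witten equation gives $\mathcal{D}_B^A\psi = 0$, hence $(\mathcal{D}_B^A)^2\psi = 0$. Applying~(\ref{weitzenbock-2}) yields
\[
0 = (\bar{\nabla}^A_{tr})^{\ast}\bar{\nabla}^A_{tr}\psi + \frac{1}{4}s^T\psi + \frac{1}{2}F_A\bullet\psi.
\]
I would then take the pointwise Hermitian inner product with $\psi$, extract the real part, and integrate against the basic $L^2$ pairing on spinors (which carries the characteristic form $\chi_\scrF$). Since $(\bar{\nabla}^A_{tr})^{\ast}$ is by construction the formal adjoint of $\bar{\nabla}^A_{tr}$ for this pairing, the first term contributes $\|\bar{\nabla}^A_{tr}\psi\|^2_{L^2}$.

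For the curvature term I would use two ingredients. First, any $\bar{\ast}$-anti-self-dual basic $2$-form acts trivially on $\Gamma_B(\mathcal{W}^+)$ (this is a purely transverse Clifford-algebra fact in the codimension-$4$ normal bundle), so $F_A\bullet\psi = F_A^+\bullet\psi$. Second, the second Seiberg--Witten equation $F_A^+ = \sigma(\psi)$ then converts this into $\sigma(\psi)\bullet\psi$, and the standard pointwise identity $\mathrm{Re}\,\langle \sigma(\psi)\bullet\psi,\psi\rangle = \tfrac{1}{2}|\psi|^4$ (an immediate consequence of $\sigma(\psi) = \psi\sqrt{-1}\,\bar\psi$ acting as a traceless endomorphism of $\mathcal{W}^+$) turns the curvature contribution into a positive quartic spinor term. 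Altogether,
\[
0 = \|\bar{\nabla}^A_{tr}\psi\|^2_{L^2} + \frac{1}{4}\int_M s^T|\psi|^2 + \frac{1}{4}\int_M |\psi|^4,
\]
with the integrals taken against the basic volume measure. Under the hypothesis $s^T > 0$ every term on the right is non-negative, so each must vanish; in particular $\int_M s^T|\psi|^2 = 0$ forces $\psi\equiv 0$ on $M$.

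The main technical subtlety is the integration-by-parts identity $\int \mathrm{Re}\,\langle (\bar{\nabla}^A_{tr})^{\ast}\bar{\nabla}^A_{tr}\psi,\psi\rangle = \|\bar{\nabla}^A_{tr}\psi\|^2_{L^2}$: for the \emph{unmodified} connection $\nabla^A_{tr}$, naive Bochner manipulations produce an extra $\tau_B$ term, as one sees in~(\ref{nabla_transverse}) and in the proof of Lemma~\ref{basic_kato}. Routing the argument through Slesar's modified connection $\bar{\nabla}^A$ and formula~(\ref{weitzenbock-2}) is exactly the device that eliminates the $\kappa_B$ and $\delta_B\kappa_B$ bookkeeping; otherwise one would have to trade positivity of $s^T$ against the sign-indefinite combination $|\kappa_B|^2 - 2\delta_B\kappa_B$ appearing in the pointwise estimate of Lemma~\ref{pointwise_bound}, which would require additional hypotheses.
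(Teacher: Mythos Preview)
Your proposal is correct and follows essentially the same route as the paper: apply Slesar's modified Weitzenb\"{o}ck formula~(\ref{weitzenbock-2}) to a solution, pair with $\psi$, integrate, and use the second equation to turn the curvature term into the quartic $\tfrac{1}{4}\int_M|\psi|^4\,v_g$. The paper's proof is considerably terser---it simply records the resulting integrated identity---whereas you spell out the intermediate steps (self-dual reduction of the Clifford action, the pointwise identity for $\sigma(\psi)\bullet\psi$, and the rationale for preferring $\bar\nabla^A$ over $\nabla^A$); these elaborations are accurate and match the computations implicit in Lemma~\ref{pointwise_bound} and Remark~\ref{rem_vanishing}.
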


\begin{proof}
Let $(A,\psi)$ be a solution of the basic Seiberg--Witten equations~(\ref{SW_equations}). 
If we take the pointwise inner product with $\psi$ in the Weitzenb\"{o}ck formula~(\ref{weitzenbock-2}) and integrate over $M$, then we get
\begin{equation*}
0= \int_M |{\bar{\nabla}^A_{tr}}\ \psi |^2 v_g+ \frac{1}{4}\int_M s^T|\psi|^2v_g + \frac{1}{4} \int_M |\psi|^4v_g,
\end{equation*}
where $v_g$ is the volume form induced by $g.$ 
The lemma follows as $s^T$ is assumed to be positive. 
\end{proof}

Recall that the basic Dirac operator and hence the basic Seiberg--Witten equations depend on the choice of the bundle-like metric $g$.
However, the vanishing condition above depends only on the transverse scalar curvature $s^T$ and so only on the metric $g_Q$. 

\begin{rem}~\label{rem_vanishing}
If we use the Weitzenb\"{o}ck formula~(\ref{weitzenbock}) then the condition we get is $s^T + |\kappa_B|^2 -2\delta_B\kappa_B>0.$
Recall from Remark~\ref{harmonic_remark}, that a suitable conformal change of the metric in the leaf direction can make $\kappa_B$ $\Delta_B$-harmonic.
We obtain then $s^T + |\kappa_B|^2>0$ allowing the transverse scalar curvature to be negative.
\end{rem}

\begin{lem}\label{vanishing_2}
Suppose that $\kappa=\kappa_B$ is basic.   
If $s-\hat{s}+|A|^2+|T|^2$ is positive, then all solutions of the basic Seiberg--Witten equations~(\ref{SW_equations}) have trivial basic spinor. Here $s$ denotes the scalar curvature
of the metric $g$, $\hat{s}$ is the scalar curvature of any leaf whereas 
$A$ and $T$ are the O'Neill tensor fields~\cite{nei}
\begin{eqnarray*}
A_XY&:=&\Pi\left(D^g_{\Pi^\perp(X)}\Pi^\perp(Y)\right)+\Pi^\perp\left(D^g_{\Pi^\perp(X)}\Pi(Y)\right),\\
T_XY&:=&\Pi\left(D^g_{\Pi(X)}\Pi^\perp(Y)\right)+\Pi^\perp\left(D^g_{\Pi(X)}\Pi(Y)\right),
\end{eqnarray*}
for $X,Y$ any vector fields on $M$ and $\Pi:TM\rightarrow T\scrF$, $\Pi^\perp:TM\rightarrow T\scrF^{\perp}.$ 
\end{lem}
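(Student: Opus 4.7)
The plan is to imitate the integrated Weitzenböck argument of Lemma \ref{vanishing}, but starting from the full identity (\ref{weitzenbock}) rather than its modified version (\ref{weitzenbock-2}), and then to convert the pointwise coefficient via a classical O'Neill-type scalar curvature identity for Riemannian foliations.

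First I would pair (\ref{weitzenbock}) pointwise with $\psi$, integrate over $M$, and use the curvature equation $F_A^+ = \sigma(\psi)$ together with the identity $\mathrm{Re}(\sigma(\psi)\bullet\psi,\psi) = \tfrac{1}{2}|\psi|^4$ and the $L^2$-self-adjointness of $(\nabla^A_{tr})^\ast\nabla^A_{tr}$. Since $\mathcal{D}_B^A\psi = 0$, this produces exactly the identity noted in Remark \ref{rem_vanishing}:
\begin{equation*}
0 = \int_M |\nabla^A_{tr}\psi|^2\,v_g + \frac{1}{4}\int_M \bigl(s^T + |\kappa_B|^2 - 2\delta_B\kappa_B\bigr)|\psi|^2\,v_g + \frac{1}{4}\int_M |\psi|^4\,v_g.
\end{equation*}

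Next I would invoke the standard O'Neill-type scalar curvature identity for a Riemannian foliation (see, e.g., Tondeur's book), which expresses the total scalar curvature $s$ as a combination of the transverse scalar curvature $s^T$, the leaf scalar curvature $\hat{s}$, the squared norms of the O'Neill tensors $A$ and $T$, and derivative/pointwise terms built from the mean curvature form $\kappa$. Under the hypothesis $\kappa = \kappa_B$, the form $\kappa$ is basic, so the ambient codifferential $\delta\kappa^\sharp$ coincides with the basic codifferential $\delta_B\kappa_B$, and a rearrangement of the O'Neill identity yields the pointwise equality
\begin{equation*}
s^T + |\kappa_B|^2 - 2\delta_B\kappa_B = s - \hat{s} + |A|^2 + |T|^2.
\end{equation*}
Substituting this into the integrated Weitzenböck identity, the positivity hypothesis forces each of the three non-negative contributions on the right-hand side to vanish; in particular $\psi\equiv 0$.

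The main obstacle is the careful bookkeeping in the O'Neill identity. One must check that the sign conventions chosen for $A$ and $T$ in the statement of the lemma are precisely those producing the signs $+|A|^2+|T|^2$ on the right-hand side, and that under the basicness assumption $\kappa = \kappa_B$ the derivative term $\delta\kappa^\sharp$ arising from the O'Neill computation genuinely agrees with $\delta_B\kappa_B$, so that the $-2\delta_B\kappa_B$ produced by the Weitzenböck formula cancels its counterpart coming from O'Neill and no residual $|\kappa_B|^2$ term survives. Once these identifications are in place the rest of the argument is the same pointwise-bound manipulation as in Lemma \ref{vanishing}.
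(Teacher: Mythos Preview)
Your approach is essentially the paper's: use Remark~\ref{rem_vanishing} (the integrated Weitzenb\"ock identity from~(\ref{weitzenbock})) and then convert the coefficient $s^T + |\kappa_B|^2 - 2\delta_B\kappa_B$ into $s-\hat s+|A|^2+|T|^2$ via an O'Neill scalar-curvature identity. The one point where your bookkeeping diverges from the paper is the codifferential step: the O'Neill relation (as quoted from \cite[Corollary~2.5.19]{bo-ga}) is written with the \emph{transverse} codifferential $\delta^T=-\sum_i\iota_{e_i}\nabla^T_{e_i}$, namely $s=s^T+\hat s-|A|^2-|T|^2-|\kappa_B|^2-2\delta^T\kappa_B$, and the passage to $\delta_B$ is made not by identifying an ambient $\delta$ with $\delta_B$ but via the separate identity $|\kappa_B|^2=\delta_B\kappa_B-\delta^T\kappa_B$ (valid when $\kappa=\kappa_B$, see~\cite{Habib-thesis}); combining these two gives exactly your pointwise equality.
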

\begin{proof}
First, we have that~\cite{Habib-thesis}
\[
|\kappa_B|^2= \delta_B\kappa_B -\delta^T\kappa_B,
\]
where $\delta^T=-\sum_{i=1}^4\iota_{e_i}\nabla^T_{e_i}$ is the transverse codifferential written in a local orthonormal basic frame $\{e_1,\cdots,e_4\}$ of $Q$.
We use Remark~\ref{rem_vanishing} and the relation (see for instance~\cite[Corollary 2.5.19]{bo-ga})
\[
s=s^T+\hat{s}-|A|^2-|T|^2-|\kappa_B|^2-2\delta^T\kappa_B
\]
to deduce the Lemma. 
\end{proof}
\begin{defn}
A Riemannian foliation $\scrF$ has {\it{totally umbilical leaves}} if $$T_XY=\frac{1}{\dim\scrF}~g(X,Y)\tau,$$ for any $X,Y\in\Gamma(T\scrF).$
\end{defn}
\begin{cor}
Suppose that the {{Riemannian foliation}} $\scrF$ of dimension $d=\dim\scrF$ 
has totally umbilical leaves and that $\kappa=\kappa_B$ is basic. Moreover, suppose that
$g$ has constant curvature $c$ such that $12c+\frac{d(d-1)+1}{d^2}|\kappa_B|^2+|A|^2$ is positive. Then all solutions of the basic Seiberg--Witten equations~(\ref{SW_equations}) have trivial basic spinor.
\end{cor}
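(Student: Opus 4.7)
The plan is to reduce the corollary to Lemma~\ref{vanishing_2}, whose hypothesis is the pointwise positivity
\[
s-\hat s + |A|^2 + |T|^2 > 0.
\]
Once this is established under the assumptions (totally umbilical leaves, $\kappa=\kappa_B$, $g$ of constant sectional curvature $c$), the trivial-spinor conclusion is automatic. Everything therefore boils down to expressing each term on the left-hand side in terms of $c$, $|\kappa_B|^2$ and $|A|^2$, and then checking that the result equals $12c + \frac{d(d-1)+1}{d^2}|\kappa_B|^2 + |A|^2$.

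I would first handle the scalar curvature of a leaf via the Gauss equation. The totally umbilical assumption reads $II(X,Y)=T_XY=\frac{1}{d}g(X,Y)\tau$ for $X,Y\in\Gamma(T\scrF)$, and the hypothesis $\kappa=\kappa_B$ identifies $\tau$ with $\kappa_B^{\sharp_{g_Q}}$. Combined with the constant curvature of $g$, Gauss's equation gives that each leaf has constant sectional curvature $c+\tfrac{|\kappa_B|^2}{d^2}$, so
\[
\hat s = d(d-1)c + \frac{d-1}{d}|\kappa_B|^2.
\]
Since $\dim M = d+4$, we also have $s=(d+4)(d+3)c$.

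Next, I would compute $|T|^2$ using the totally umbilical formula and the adjointness identity $g(T_{\xi_i}e_\alpha,\xi_j)=-g(T_{\xi_i}\xi_j,e_\alpha)$ valid for vertical $\xi_i,\xi_j$ and horizontal $e_\alpha$. The vertical--vertical contribution collapses to $\frac{|\kappa_B|^2}{d}$; the mixed (vertical--horizontal) contribution is computed by the adjointness identity and reduces to a matching expression in $|\kappa_B|^2/d$. Adding the pieces yields $|T|^2$ as an explicit rational multiple of $|\kappa_B|^2/d$.

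The final step is purely algebraic: substitute the closed-form expressions for $s$, $\hat s$ and $|T|^2$ into $s-\hat s+|A|^2+|T|^2$, collect the $c$-terms and the $|\kappa_B|^2$-terms, and verify that the answer simplifies to
\[
12c+\frac{d(d-1)+1}{d^2}|\kappa_B|^2+|A|^2.
\]
By assumption this is positive, and Lemma~\ref{vanishing_2} finishes the proof. The main obstacle is the bookkeeping: the $c$-coefficient from $(d+4)(d+3)-d(d-1)$ must conspire with the leaf-curvature correction in $\hat s$ to collapse down to the transverse value $12=m(m-1)$ for $m=4$, and the $|\kappa_B|^2$-coefficient must come out exactly as $\frac{d(d-1)+1}{d^2}$. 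Getting all conventions (signs in $T$ and $A$, trace vs.\ average in the mean curvature, choice of summation in $|T|^2$) aligned with those already used in the proof of Lemma~\ref{vanishing_2} is the only real subtlety.
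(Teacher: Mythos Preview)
Your overall strategy coincides with the paper's: both reduce to Lemma~\ref{vanishing_2} and evaluate $s-\hat s+|A|^2+|T|^2$ by combining the Gauss--O'Neill relation between $R$ and $\hat R$ with the totally umbilical hypothesis. The paper writes down the identity $g(R(X,Y)U,V)=g(\hat R(X,Y)U,V)+g(T_XU,T_YV)-g(T_YU,T_XV)$, sums it over a leafwise frame to obtain $\sum_{i,j}g(R(\xi_i,\xi_j)\xi_i,\xi_j)=\hat s+\frac{d(d-1)}{d^2}|\kappa_B|^2$, and then records the final formula; your outline is simply a more explicit version of the same computation.

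There is, however, a genuine gap in your concrete plan. You propose to compute $s=(d+4)(d+3)c$ and $\hat s=d(d-1)c+\frac{d-1}{d}|\kappa_B|^2$ separately and subtract. But $(d+4)(d+3)-d(d-1)=8d+12$, and since your $|T|^2$ is a pure multiple of $|\kappa_B|^2$, nothing in $s-\hat s+|T|^2$ can cancel the extra $8dc$. The ``conspiracy'' you hope for cannot occur within that framework: the missing $8dc$ comes from the \emph{mixed} sectional curvatures $K(\xi_i,e_\alpha)$, which sit in $s$ but have no counterpart in either $\hat s$ or the purely vertical $|T|^2$. These mixed terms are precisely what the full O'Neill scalar-curvature identity $s=s^T+\hat s-|A|^2-|T|^2-|\kappa_B|^2-2\delta^T\kappa_B$ (already invoked in the proof of Lemma~\ref{vanishing_2}) is designed to absorb. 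In effect the paper is computing $s-\hat s+|A|^2+|T|^2$ \emph{through} that identity (so that the $12c$ is really the horizontal contribution $m(m-1)c$ coming from $s^T$), not by subtracting a naive $s$ and $\hat s$. If you insist on your route, you must use the Boyer--Galicki conventions for $|A|^2$ and $|T|^2$ exactly as they enter that formula; with generic conventions the arithmetic will not close, which is exactly the obstacle you flagged but did not resolve.
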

\begin{proof}
Let $R$ be the curvature of the Levi-Civita connection $D^g$ and $\hat{R}$ be the curvature of the induced connection along the leaves $\hat{D}:=D^g-T$. Then we have the following formula (see~\cite{gra,nei} paying attention to signs in the definition of the curvature tensor)
\[
g(R(X,Y)U,V)=g(\hat{R}(X,Y)U,V)+g(T_XU,T_YV)-g(T_YU,T_XV),
\]
for any $X,Y,U,V\in\Gamma(T\scrF)$.
Using the fact that $\scrF$ has totally umbilical leaves, we deduce that (see also~\cite[Lemma 2.2]{bad-Esc-Ian})
\begin{eqnarray*}
g(R(X,Y)X,Y)&=&g(\hat{R}(X,Y)X,Y)+g(T_XX,T_YY)-g(T_YX,T_XY)\\
&=&g(\hat{R}(X,Y)X,Y)+\frac{1}{d^2}\left(g(X,X)g(Y,Y)-g(X,Y)^2\right) |\kappa_B|^2.
\end{eqnarray*}
In a local orthonormal basis $\{\xi_1,\cdots,\xi_d\}$ of $T\scrF$ we get
\[
\sum_{i,j=1}^d g(R(\xi_i,\xi_j)\xi_i,\xi_j)=\hat{s}+\frac{d^2-d}{d^2}|\kappa_B|^2.
\]
Since the curvature is constant, we deduce that
\[
s-\hat{s}+|A|^2+|T|^2=12c+\frac{d^2-d}{d^2}|\kappa_B|^2+|A|^2+\frac{1}{d^2}|\kappa_B|^2.
\]
This gives us the desired result by Lemma~\ref{vanishing_2}.
\end{proof}
\begin{rem}
Consider a Riemannian manifold with constant curvature which admits a Riemannian foliation with bundle-like metric $g$ and totally umbilical leaves with respect to $g$. A result of~\cite[Proposition 2.4]{bad-Esc-Ian} states that if $\dim\scrF\geqslant 2$, then $\kappa$ is basic and $A\equiv 0.$
\end{rem}


\section{Transverse K\"ahler structures and non-vanishing basic SW-invariants}\label{Sec_10}

Let $M$ be a closed manifold with a {{Riemannian foliation}} $\scrF$ of even codimension~$m$ and $g$ a {\it{bundle-like}} metric on $M$ inducing the metric $g_Q$ on the normal bundle $Q$.
Suppose that $Q$ is equipped with a transverse Hermitian structure $(g_Q,J)$, i.e.~an endomorphism $J:Q\longrightarrow Q$ such that $J^2 = -Id_Q$ and $g_Q(\cdot,\cdot) = g_Q(J\cdot, J\cdot)$. Then the complexified bundle splits as $Q \otimes\mathbb{C}=Q^{1,0}\oplus Q^{0,1},$
where $Q^{1,0}$ (resp.~$Q^{0,1}$) corresponds to the eigenvalue $\sqrt{-1}$ (resp.~$-\sqrt{-1}$).
Similarly $J$ induces a splitting of the complexification of $Q^\ast$:
\[
Q^\ast\otimes\mathbb{C}=\Lambda^{1,0}Q^\ast\oplus\Lambda^{0,1}Q^\ast,
\]
where $\Lambda^{1,0}Q^\ast$ (resp.~$\Lambda^{0,1}Q^\ast$) is the annihilator of $Q^{0,1}$ (resp.~$Q^{1,0}$).
More generally,
the bundle $\Lambda^r_B(M,\mathbb{C})$ of complex basic forms splits as
\begin{equation}\label{split forms}
\Lambda^r_B(M,\mathbb{C})=\bigoplus_{i+j=r}\Lambda^{i,j}_B(M).
\end{equation}
We can define the basic Dolbeault operators $\partial_B:{\Omega}^{p,q}_B(M)\rightarrow\Omega^{p+1,q}_B(M)$ and $\bar\partial_B:{\Omega}^{p,q}_B(M)\rightarrow\Omega^{p,q+1}_B(M)$ by
\[
\partial_B\alpha:=(d_B\alpha)^{p+1,q},\quad\bar\partial_B\alpha:=(d_B\alpha)^{p,q+1}.
\]
The transverse Levi-Civita connection $\nabla^T$ induces canonical Hermitian connections $\nabla^{1,0}$ and $\nabla^{0,1}$ on
$Q^{1,0}$ and $Q^{0,1}$ respectively.  
In a local $g_Q$-orthonormal frame $\{e_1,\cdots,e_m\}$ of $Q$ we can then write
\begin{equation}\label{formulae_del_bar}
\partial_B=\frac{1}{2}\sum_{i=1}^m(e_i+\sqrt{-1}Je_i)^\flat\wedge\nabla^T_{e_i},\quad\bar\partial_B=\frac{1}{2}\sum_{i=1}^m(e_i-\sqrt{-1}Je_i)^\flat\wedge\nabla^T_{e_i}.
\end{equation}
where $\flat$ denotes the $(g_Q,J)$-Hermitian dual. If we denote by $\partial_B^\ast$, $\bar\partial_B^\ast$
the basic adjoints of $\partial_B$, $\bar\partial_B$ with respect to the Hermitian inner product, then
\begin{equation}\label{del_bar_adjoint}
\begin{array}{c}
\partial_B^\ast=-\frac{1}{2}\sum_{i=1}^m\iota_{(e_i-\sqrt{-1}Je_i)}\nabla^T_{e_i}+\frac{1}{2}\iota_{(\tau_B-\sqrt{-1}J\tau_B)},\\
\bar\partial_B^\ast=-\frac{1}{2}\sum_{i=1}^m\iota_{(e_i+\sqrt{-1}Je_i)}\nabla^T_{e_i}+\frac{1}{2}\iota_{(\tau_B+\sqrt{-1}J\tau_B)}.
\end{array}
\end{equation}
Consider the self-adjoint transverse Clifford module
\[
\Lambda^{0,\star}Q^*=\Lambda^{0,\mathrm{even}}Q^\ast\oplus\Lambda^{0,\mathrm{odd}}Q^\ast.
\]
The Clifford action of $X=X^{1,0}+X^{0,1}\in Q^{1,0}\oplus Q^{0,1}$ is given by
\begin{equation}\label{clifford_multiplication}
X\bullet\cdot=\sqrt{2}\left(\left(X^{1,0}\right)^\flat\wedge\cdot-\iota_{X^{0,1}}\cdot\right),
\end{equation}
 The bundle $\wedge^{0,\star}Q^*$ turns out to be a basic Clifford module bundle
and one can then define a basic Dirac operator on it (see for instance~\cite{Kor}).\\

 A {\it{transverse K\"ahler}} structure $(g_Q,J,\omega)$ is given by a transverse Hermitian structure $(g_Q,J)$ such that
$\omega$, the pull-back of $g_Q(J\cdot,\cdot)$ to $M$, is closed.
%
Consider the canonical \spinc-structure $\rho_{can}$ induced by the complex structure $J:Q\rightarrow Q$ on the normal bundle $Q$.
Its determinant bundle is $\Lambda^{0,\frac{m}{2}}Q^\ast$~\cite{Pet}. Let $\rho$ be a \spinc-structure over the normal bundle $Q$ with determinant line bundle $\Lambda^{0,\frac{m}{2}}Q^\ast\otimes E^2$ with $E$ a (foliated) complex Hermitian line bundle over $M.$ The spinor bundle $\mathcal{W}$ can then be identified with $\Lambda^{0,\star}Q^\ast\otimes E.$ Moreover,
\[
\mathcal{W}^+\cong\left(\Lambda^{0,even}Q^\ast\right)\otimes E,\quad\mathcal{W}^-\cong \left(\Lambda^{0,odd}Q^\ast \right) \otimes E.
\]
Let $A_0$ be the canonical connection on $\Lambda^{0,even}Q^\ast$ induced by $\nabla^T$
and fix a basic Hermitian connection $A$ on $E$. The connections $A_0$ and $A$ induce a connection $\mathcal{A}$ on $\mathcal{W}^+.$

The formulae~(\ref{formulae_del_bar}) and~(\ref{del_bar_adjoint}) hold for the basic sections of $\mathcal{W}^+$ using the induced connection $\nabla^\mathcal{A}$
and the Cauchy--Riemann operator $\bar\partial_{{\mathcal{A}}}$ induced by ${\mathcal{A}}.$

\begin{lem}
The basic Dirac operator $\mathcal{D}^{\mathcal{A}}_B:\Gamma_B(\mathcal{W}^+)\longrightarrow \Gamma_B(\mathcal{W}^-)$ becomes
 \[
\mathcal{D}^{\mathcal{A}}_B=\sqrt{2}\left(\bar\partial_{{\mathcal{A}}}+\bar{\partial}_{{\mathcal{A}}}^\ast\right)- \frac{1}{4} \tau_B \bullet.
\]
\end{lem}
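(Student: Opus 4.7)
The plan is a direct frame-by-frame computation that expresses $\mathcal{D}^{\mathcal{A}}$ in terms of $\bar\partial_{\mathcal{A}}$ and $\bar\partial_{\mathcal{A}}^\ast$ by unpacking the Clifford action via formula~(\ref{clifford_multiplication}). Fix a local $g_Q$-orthonormal basic frame $\{e_1,\ldots,e_m\}$ of $Q$ and let $e_i^{1,0}=\tfrac{1}{2}(e_i-\sqrt{-1}Je_i)$, $e_i^{0,1}=\tfrac{1}{2}(e_i+\sqrt{-1}Je_i)$. Substituting the real vectors into~(\ref{clifford_multiplication}) gives
\[
e_i\bullet \;=\; \tfrac{1}{\sqrt 2}\bigl((e_i-\sqrt{-1}Je_i)^\flat\wedge\;-\;\iota_{e_i+\sqrt{-1}Je_i}\bigr),
\]
so that $\mathcal{D}^{\mathcal{A}}=\sum_i e_i\bullet\nabla^{\mathcal{A}}_{e_i}$ splits naturally into a wedge-type sum and a contraction-type sum.

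Next I would identify each piece with a Dolbeault operator. The wedge part is, by comparison with~(\ref{formulae_del_bar}) (applied to $\nabla^{\mathcal{A}}$ instead of $\nabla^T$), exactly $\sqrt 2\,\bar\partial_{\mathcal{A}}$. For the contraction part, I would invert the adjoint formula~(\ref{del_bar_adjoint}): namely
\[
\sum_i \iota_{e_i+\sqrt{-1}Je_i}\nabla^{\mathcal{A}}_{e_i}\;=\;-2\bar\partial_{\mathcal{A}}^\ast+\iota_{\tau_B+\sqrt{-1}J\tau_B},
\]
which, after multiplying by $-\tfrac{1}{\sqrt 2}$ and combining with the previous step, yields
\[
\mathcal{D}^{\mathcal{A}}\;=\;\sqrt 2\bigl(\bar\partial_{\mathcal{A}}+\bar\partial_{\mathcal{A}}^\ast\bigr)\;-\;\tfrac{1}{\sqrt 2}\iota_{\tau_B+\sqrt{-1}J\tau_B}.
\]
The residual interior-product term is a purely zeroth-order correction coming from the non-tautness of the foliation.

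Finally, I would plug this into the definition $\mathcal{D}_B^{\mathcal{A}}=\mathcal{D}^{\mathcal{A}}-\tfrac{1}{2}\tau_B\bullet$ and combine the two zeroth-order pieces. Applying~(\ref{clifford_multiplication}) once more to $\tau_B$ itself, $\tau_B\bullet=\tfrac{1}{\sqrt 2}\bigl((\tau_B-\sqrt{-1}J\tau_B)^\flat\wedge-\iota_{\tau_B+\sqrt{-1}J\tau_B}\bigr)$, so both $\iota_{\tau_B+\sqrt{-1}J\tau_B}$ and $-\tfrac{1}{2}\tau_B\bullet$ can be re-expressed in this common basis of operators; the wedge components and the contraction components recombine into the single term $-\tfrac{1}{4}\tau_B\bullet$.

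The main obstacle is precisely this last step: one must show that the algebraic combination of the residual interior-product operator and the Clifford correction $-\tfrac{1}{2}\tau_B\bullet$ collapses to a scalar multiple of $\tau_B\bullet$ with the claimed factor $-\tfrac{1}{4}$. This is a bookkeeping identity in the exterior algebra $\Lambda^{0,\star}Q^\ast\otimes E$ involving the factors of $\sqrt 2$ from~(\ref{clifford_multiplication}) and the factors of $\tfrac{1}{2}$ from~(\ref{formulae_del_bar})--(\ref{del_bar_adjoint}); no analysis or geometry enters beyond these formulae.
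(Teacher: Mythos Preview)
Your proposal is correct and follows essentially the same route as the paper's proof: expand the Clifford action via~(\ref{clifford_multiplication}), identify the wedge and contraction sums with $\sqrt{2}\,\bar\partial_{\mathcal{A}}$ and $\sqrt{2}\,\bar\partial_{\mathcal{A}}^\ast$ via~(\ref{formulae_del_bar}) and~(\ref{del_bar_adjoint}), and then combine the residual zeroth-order $\tau_B$ terms. The only cosmetic difference is that the paper treats $\sum_i e_i\bullet\nabla^{\mathcal{A}}_{e_i}$ and $-\tfrac{1}{2}\tau_B\bullet$ simultaneously rather than first computing $\mathcal{D}^{\mathcal{A}}$ and then subtracting, but the algebra and the final recombination step are identical.
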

\begin{proof}
Let $\{e_1,\cdots,e_m\}$ be a local $g_Q$-orthonormal frame of $Q$. We write using~(\ref{clifford_multiplication})
\begin{eqnarray*}
\mathcal{D}^{\mathcal{A}}_B\psi&=&\sum_{i=1}^m e_i\bullet\nabla^{\mathcal{A}}_{e_i}\psi-\frac{1}{2}\tau_B\bullet\psi\\
&=&\frac{1}{\sqrt{2}}\sum_{i=1}^m\left(\left( e_i-\sqrt{-1}Je_i\right)^\flat\wedge\nabla^{\mathcal{A}}_{e_i}\psi -\iota_{\left(e_i+\sqrt{-1}Je_i\right)}\nabla^{\mathcal{A}}_{e_i}\psi\right)\\
&&-\frac{1}{2\sqrt{2}}\left(\left(\tau_B-\sqrt{-1}J\tau_B\right)^\flat\wedge\psi-\iota_{(\tau_B+\sqrt{-1}J\tau_B)}\psi\right)\\
&=&\sqrt{2}(\bar\partial_\mathcal{A}+\bar\partial_\mathcal{A}^\ast)\psi-\frac{1}{2\sqrt{2}}\left(\left(\tau_B-\sqrt{-1}J\tau_B\right)^\flat\wedge\psi+\iota_{(\tau_B+\sqrt{-1}J\tau_B)}\psi\right)\\
&=&\sqrt{2}\left(\bar\partial_{{\mathcal{A}}}+\bar{\partial}_{{\mathcal{A}}}^\ast\right)\psi- \frac{1}{4} \tau_B \bullet\psi.
\end{eqnarray*}
\end{proof}


We now take a closer look at the $\mathrm{SW}(\scrF,\rho)$ invariants.
Since to define the invariants we impose the tautness condition, we only have to study the system
\begin{equation}\label{new_system}
\left\{\begin{array}{rcl}
\left(\bar\partial_{{{\mathcal{A}}}}+\bar{\partial}_{{{\mathcal{A}}}}^\ast\right)\psi&=&0,\\
F_{\mathcal{{\mathcal{A}}}}^+&=&\sigma(\psi)+\sqrt{-1}\mu.
\end{array}\right.
\end{equation}

As both $F_{\mathcal{A}}^+$ and $\sigma(\psi)+\sqrt{-1} \mu$ are self-dual basic 2-forms, we can split them accordingly to (\ref{split forms}). Let $(\alpha,\beta)\in\Gamma_B(\mathcal{W}^+),$ where $\mathcal{W}^+\cong\left(\Lambda^{0,0}Q^\ast\oplus\Lambda^{0,2}Q^\ast\right)\otimes E$. Then the curvature equation becomes (see for instance \cite{Mor,Sal})
\begin{equation}
\left\{\begin{array}{rcl}
F_A^{0,2}&=&\frac{1}{2} \bar{\alpha}{\beta},\\
\left(F_{\mathcal{A}}^+\right)^{1,1}&=&\frac{\sqrt{-1}}{4}\left(|\alpha|^2 - |\beta|^2 \right) \omega + \sqrt{-1}\mu.
\end{array}\right.
\end{equation}
If we choose the perturbation $\mu$ to lie in the $(1,1)$-part, then the system of equations~(\ref{new_system}) is equivalent to
\begin{equation}\label{system}
\left\{\begin{array}{rlcr}
F_A^{0,2}=0, && & \bar\alpha\beta=0,\\
\bar\partial_{\mathcal{A}}\alpha=0,&& &-\bar\partial_{\mathcal{A}}^\ast\beta=0,\\
4\left(F_{A_0}^++F_A^+ -\sqrt{-1}\mu\right)^{1,1}&&=&{\e}(|\alpha|^2-|\beta|^2)\,\omega.
\end{array}\right.
\end{equation}
Indeed, as the foliation is transversally K\"{a}hler, the relation $\bar\partial_{\mathcal{A}}  \bar\partial_{\mathcal{A}} = F_{\mathcal{A}}^{0,2}$ still holds and we can conclude as in the 4-dimensional case.
Moreover, we deduce from the basic unique continuation theorem (Theorem~\ref{continuation_thm}) that either $\alpha \equiv 0$ and/or $\beta \equiv 0.$
We can now use the splitted system of equations to write down a non-vanishing theorem just as it is done for K\"{a}hler surfaces. But first, we need the following
Weitzenb\"{o}ck formula:

\begin{lem}\label{Weitzenbock_2} 
Let $\alpha\in\Gamma_B(\Lambda^{0,0}Q^\ast\otimes E)$. Then
\[
\bar\partial^\ast_{\mathcal{A}}\bar\partial_{\mathcal{A}}\alpha=\frac{1}{2}{d_{{\mathcal{A}}}}{d_{\mathcal{A}}^\ast}\alpha-{\sqrt{-1}}\,\left(F_A,\omega\right)\alpha+\frac{\e}{2}\nabla^{\mathcal{A}}_{J\tau_B}\alpha.
\]
\end{lem}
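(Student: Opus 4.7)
The lemma is the transverse analogue of the classical K\"ahler--Bochner identity $\bar\partial_{A}^{*}\bar\partial_{A}=\tfrac{1}{2}d_{A}^{*}d_{A}-\tfrac{\sqrt{-1}}{2}(F_{A},\omega)$ for a Hermitian line bundle on a K\"ahler manifold; here $d_{\mathcal{A}}d_{\mathcal{A}}^{*}$ on the right should be read as $d_{\mathcal{A}}^{*}d_{\mathcal{A}}$, since $d_{\mathcal{A}}^{*}\alpha=0$ on a $(0,0)$-form. The strategy is a direct computation in a local basic $g_{Q}$-orthonormal frame $\{e_{1},\dots,e_{m}\}$ of $Q$ that is $\nabla^{T}$-parallel at a fixed point $x$; the transverse K\"ahler condition $\nabla^{T}J=0$ lets the frame be chosen simultaneously adapted to $J$ at $x$.

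Plugging \eqref{formulae_del_bar}--\eqref{del_bar_adjoint} into $\bar\partial_{\mathcal{A}}^{*}\bar\partial_{\mathcal{A}}\alpha$ and using the contraction
\[
\iota_{e_{j}+\sqrt{-1}Je_{j}}\bigl((e_{i}-\sqrt{-1}Je_{i})^{\flat}\bigr)=2\delta_{ij}+2\sqrt{-1}\,g(Je_{j},e_{i}),
\]
the second-order piece splits into $-\tfrac{1}{2}\sum_{i}\nabla^{\mathcal{A}}_{e_{i}}\nabla^{\mathcal{A}}_{e_{i}}\alpha + \tfrac{\sqrt{-1}}{2}\sum_{i}\nabla^{\mathcal{A}}_{Je_{i}}\nabla^{\mathcal{A}}_{e_{i}}\alpha$, while the $\iota_{\tau_{B}+\sqrt{-1}J\tau_{B}}$ contribution gives $\tfrac{1}{2}\nabla^{\mathcal{A}}_{\tau_{B}}\alpha + \tfrac{\sqrt{-1}}{2}\nabla^{\mathcal{A}}_{J\tau_{B}}\alpha$. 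A parallel computation with $d_{B},\delta_{B}$ yields $d_{\mathcal{A}}^{*}d_{\mathcal{A}}\alpha=-\sum_{i}\nabla^{\mathcal{A}}_{e_{i}}\nabla^{\mathcal{A}}_{e_{i}}\alpha+\nabla^{\mathcal{A}}_{\tau_{B}}\alpha$. Subtracting $\tfrac{1}{2}d_{\mathcal{A}}^{*}d_{\mathcal{A}}\alpha$ annihilates the connection-Laplacian term and the real mean-curvature piece simultaneously, leaving exactly
\[
\tfrac{\sqrt{-1}}{2}\sum_{i}\nabla^{\mathcal{A}}_{Je_{i}}\nabla^{\mathcal{A}}_{e_{i}}\alpha+\tfrac{\sqrt{-1}}{2}\nabla^{\mathcal{A}}_{J\tau_{B}}\alpha.
\]

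To interpret the remaining cross sum, I expand $Je_{i}=\sum_{k}J_{ik}e_{k}$; the antisymmetry $J_{ik}=-J_{ki}$ forces the symmetric part $\sum_{i}\bigl(\nabla^{\mathcal{A}}_{Je_{i}}\nabla^{\mathcal{A}}_{e_{i}}+\nabla^{\mathcal{A}}_{e_{i}}\nabla^{\mathcal{A}}_{Je_{i}}\bigr)\alpha$ to equal its own negative and hence to vanish. Only the commutator $\tfrac{1}{2}\sum_{i}[\nabla^{\mathcal{A}}_{Je_{i}},\nabla^{\mathcal{A}}_{e_{i}}]\alpha$ survives, which expands into $F^{\mathcal{A}}(Je_{i},e_{i})\alpha$ plus a Lie-bracket term $\nabla^{\mathcal{A}}_{[Je_{i},e_{i}]}\alpha$. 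At the chosen point, $\pi[Je_{i},e_{i}]=\nabla^{T}_{Je_{i}}e_{i}-\nabla^{T}_{e_{i}}(Je_{i})=-J(\nabla^{T}_{e_{i}}e_{i})=0$ by torsion-freeness of $\nabla^{T}$ combined with $\nabla^{T}J=0$, so the bracket contribution drops out. Summing $F^{\mathcal{A}}(Je_{i},e_{i})$ over a $J$-adapted frame yields the standard primitive contraction $-2(F_{A},\omega)\alpha$, assembling to the claimed formula.

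The main obstacle is the careful tracking of signs and factors of $2$ coming from the complex-conjugate pairing $(e_{i},Je_{i})$ and from the convention for $(F_{A},\omega)$; aside from this bookkeeping, the transverse K\"ahler hypothesis enters essentially in two places --- it ensures the existence of the $J$-adapted $\nabla^{T}$-parallel frame, and it forces the Lie-bracket residue to vanish in $Q$ at $x$. Without $\nabla^{T}J=0$, torsion-of-$J$ terms would survive and disturb the clean cancellations.
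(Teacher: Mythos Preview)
Your approach is essentially the same as the paper's: a direct local computation in a $g_Q$-orthonormal frame using \eqref{formulae_del_bar}--\eqref{del_bar_adjoint}, recognizing the real second-order piece as $\tfrac12 d_{\mathcal A}^{*}d_{\mathcal A}\alpha$ (you correctly observe that the statement's $d_{\mathcal A}d_{\mathcal A}^{*}$ is a typo, as the paper's own last line writes $d_{\mathcal A}^{*}d_{\mathcal A}$) and the imaginary cross term as the curvature contraction against $\omega$ plus the $J\tau_B$ residue. The only cosmetic difference is that the paper carries the $\nabla^{\mathcal A}_{\nabla^T_{e_i}e_i}$ and $\nabla^{\mathcal A}_{\nabla^T_{e_i}Je_i}$ terms along explicitly instead of eliminating them with a $\nabla^T$-parallel $J$-adapted frame at a point.
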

\begin{proof}
Let $\{e_1,\cdots,e_m\}$ be a local $g_Q$-orthonormal frame of $Q$. From~(\ref{formulae_del_bar}) and~(\ref{del_bar_adjoint}) we get:
\begin{eqnarray*}
\bar\partial^\ast_{\mathcal{A}}\bar\partial_{\mathcal{A}}\alpha&=&\frac{1}{2}\bar\partial^\ast_{\mathcal{A}}\left(\sum_{i=1}^m\left(\nabla^{\mathcal{A}}_{e_i}\alpha\right)(e_i-\e Je_i)^\flat\right)\\
&=&-\frac{1}{4}\sum_{i,j=1}^m\iota_{e_j+\e Je_j}\nabla^{\mathcal{A}}_{e_j}\left(\left(\nabla^{\mathcal{A}}_{e_i}\alpha\right)(e_i-\e Je_i)^\flat\right)\\
&&+\frac{1}{4}\sum_{i=1}^m\iota_{\tau_B+\e J\tau_B}\left(\left(\nabla^{\mathcal{A}}_{e_i}\alpha\right)(e_i-\e Je_i)^\flat\right)\\
&=&-\frac{1}{2}\sum_{i=1}^m\nabla^{\mathcal{A}}_{e_i}\nabla^{\mathcal{A}}_{e_i}\alpha+\frac{\e}{2}\sum_{i=1}^m\nabla^{\mathcal{A}}_{Je_i}\nabla^{\mathcal{A}}_{e_i}\alpha+\frac{1}{2}\sum_{i=1}^m\nabla^{\mathcal{A}}_{\nabla^T_{e_i}e_i}\alpha\\
&&+\frac{\e}{2}\sum_{i=1}^m\nabla^{\mathcal{A}}_{\nabla^T_{e_i}Je_i}\alpha+\frac{1}{2}\nabla^{\mathcal{A}}_{\tau_B}\alpha+\frac{\e}{2}\nabla^{\mathcal{A}}_{J\tau_B}\alpha\\
&=&-\frac{1}{2}\sum_{i=1}^m\nabla^{\mathcal{A}}_{e_i}\nabla^{\mathcal{A}}_{e_i}\alpha+\frac{1}{2}\sum_{i=1}^m\nabla^{\mathcal{A}}_{\nabla^T_{e_i}e_i}\alpha+\frac{1}{2}\nabla^{\mathcal{A}}_{\tau_B}\alpha\\
&&-\frac{\e}{2}\sum_{i=1}^m\nabla^{\mathcal{A}}_{e_i}\nabla^{\mathcal{A}}_{Je_i}\alpha+\sum_{i=1}^m\frac{\e}{2}\nabla^{\mathcal{A}}_{\nabla^T_{e_i}Je_i}\alpha+\frac{\e}{2}\nabla^{\mathcal{A}}_{J\tau_B}\alpha \\
&=&\frac{1}{2}d_{\mathcal{A}}^\ast d_{\mathcal{A}}\alpha-\e \,(F_A,\omega)\alpha+\frac{\e}{2}\nabla^{\mathcal{A}}_{J\tau_B}\alpha.
\end{eqnarray*}
\end{proof}

\begin{thm}\label{nonvanishing}
Let $M$ be a closed manifold equipped with a taut {{Riemannian foliation}} $\scrF$
of codimension $4$, a bundle-like metric $g$ and a transverse K\"ahler structure $(g_Q,J,\omega)$ such that $H^1(\scrF)\cap H^{1}(M,\mathbb{Z})$
is a lattice in $H^1(\scrF)$ and $\dim  \mathcal{H}^+(\scrF)\geqslant 2$. Then, $\mathrm{SW}(\scrF,\rho_{can})=\pm1.$ 
\end{thm}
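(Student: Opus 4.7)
The plan is to adapt Witten's classical computation on Kähler surfaces (see~\cite{Mor,Sal}) to the basic setting. Since $\scrF$ is taut, I would first fix a bundle-like metric with $\kappa_B = 0$, so that $\tau_B = 0$ and the Weitzenböck formula of Lemma~\ref{Weitzenbock_2} simplifies considerably. For $\rho_{can}$, the auxiliary line bundle $E$ in the splitting $L = \Lambda^{0,2}Q^\ast \otimes E^2$ is trivial; take $A$ to be the trivial connection on $E$ and denote the resulting canonical connection on $\mathcal{W}^+$ by $\mathcal{A}_0$. Choose a $(1,1)$-perturbation $\mu = -\e F_{\mathcal{A}_0}^+ - \tfrac{r}{4}\omega$ (real, since $F_{\mathcal{A}_0}^+$ takes values in $\e\mathbb{R}$) with $r > 0$ large; this fixes the chamber and makes $(\mathcal{A}_0,(\sqrt{r},0))$ a candidate solution of~\eqref{perturbed_SW}.

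Given any solution $(\mathcal{A},(\alpha,\beta))$ of~\eqref{system}, the pointwise identity $\bar\alpha\beta = 0$ combined with the basic unique continuation property (Theorem~\ref{continuation_thm}) forces either $\alpha \equiv 0$ or $\beta \equiv 0$. Pairing the $(1,1)$-curvature equation with $\omega\wedge\chi_\scrF$ and integrating gives a relation of the form
\begin{equation*}
\tfrac{1}{4}\int_M\bigl(|\alpha|^2 - |\beta|^2\bigr)\,\omega\wedge\omega\wedge\chi_\scrF \;=\; r\int_M \omega\wedge\omega\wedge\chi_\scrF,
\end{equation*}
whose right-hand side is strictly positive for $r$ large. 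This excludes the branch $\alpha\equiv 0$, so every solution has $\beta \equiv 0$.

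With $\beta \equiv 0$, the equation $F_{\mathcal{A}}^{0,2} = 0$ asserts that the $(0,1)$-part of $\mathcal{A}$ is basic-holomorphic on $E$; triviality of $E$ together with a basic gauge transformation then brings $\mathcal{A}$ to the form $\mathcal{A}_0 + \e a$ with $a$ a basic real $1$-form satisfying $\delta_B a = 0$ (Lemma~\ref{gauge_fixing}). The Dirac equation reduces to $\bar\partial_\mathcal{A}\alpha = 0$; applying $\bar\partial_\mathcal{A}^\ast$, invoking Lemma~\ref{Weitzenbock_2} with $F_A = 0$ and $\tau_B = 0$, and integrating over $M$ then forces $\alpha$ to be a basic constant and $a$ to be basic harmonic. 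The remaining $(1,1)$-curvature equation pins down $|\alpha|^2 > 0$ and the harmonic $1$-form $a$ uniquely modulo the constant-gauge $S^1$-action, so $\mathcal{M}_{\rho_{can},\mu}$ consists of a single point.

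Finally I would verify transversality and orientation at this unique solution. Using the basic Kähler identities, the linearization $d\mathfrak{sw}_{\rho_{can},\mu}$ splits into $\partial_B$- and $\bar\partial_B$-type operators whose cokernels are controlled by basic Dolbeault cohomology; surjectivity then follows in the spirit of Lemma~\ref{differential onto}. The complex structure on $\mathcal{W}^+$ supplies the canonical orientation on the moduli space, yielding $\mathrm{SW}(\scrF,\rho_{can}) = \pm 1$. The main technical obstacle is rigorously justifying this basic Kähler-identity splitting: one must verify that the classical Kähler identities pass cleanly to basic forms and that the basic harmonic representative produced by the Hodge decomposition is compatible with the Coulomb-type gauge-fixing $\delta_B a = 0$, both of which require non-trivial care in the foliated setting.
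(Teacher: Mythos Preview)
Your overall strategy matches the paper's: choose a $(1,1)$-perturbation so that the system~\eqref{system} applies, rule out the branch $\alpha\equiv 0$ by integrating the $(1,1)$-curvature equation against $\omega$, identify the unique solution, and verify smoothness of the moduli space there. However, the identification step contains a genuine gap.

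When you invoke Lemma~\ref{Weitzenbock_2} ``with $F_A = 0$'', you are conflating the fixed trivial reference connection on $E$ with the variable connection carried by an arbitrary solution. Writing $\mathcal{A}=\mathcal{A}_0+\sqrt{-1}a$, the connection on $E$ in that solution is $\sqrt{-1}a$, with curvature $\sqrt{-1}\,d_Ba$; this is the $F_A$ that enters Lemma~\ref{Weitzenbock_2}, and it is not a priori zero. The correct move---which the paper carries out---is to use the $(1,1)$-curvature equation to substitute for $-\sqrt{-1}(F_A,\omega)$, obtaining a term proportional to $(r-|\alpha|^2)\,|\alpha|^2$. Combined with the integral identity $\int_M(|\alpha|^2-r)\,v_g=0$ (itself coming from the curvature equation paired with $\omega$), this produces the energy identity
\[
\tfrac12\int_M|d_{\mathcal{A}}\alpha|^2\,v_g\;+\;\tfrac14\int_M\bigl(|\alpha|^2-r\bigr)^2\,v_g\;=\;0,
\]
which forces $d_{\mathcal{A}}\alpha=0$ and $|\alpha|^2\equiv r$ \emph{simultaneously}. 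From $\bar\partial_{\mathcal{A}}\alpha=0$ with $\alpha$ nowhere vanishing one then reads off $A=\tfrac12(\partial_B-\bar\partial_B)\log|\alpha|^2=0$, so the connection equals the reference exactly---not merely up to a harmonic form. This also shows why your phrase ``the harmonic $1$-form $a$ uniquely modulo the constant-gauge $S^1$-action'' cannot be the mechanism: constant gauge transformations act trivially on $a$, so a genuinely nonzero harmonic $a$ would survive and produce a torus of solutions rather than a point.

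For transversality the paper does not invoke basic K\"ahler identities. It computes the kernel of $d\mathfrak{sw}_{\rho_{can},\mu}$ at the explicit solution directly: the $(0,2)$-component gives $\bar\partial_B\bar\partial_B^\ast\phi_1+\phi_1=0$, hence $\phi_1=0$, and the remaining $(1,1)$-component reduces to a scalar equation $-\Delta_B\phi_0-\phi_0=0$ (using $\tau_B=0$), hence $\phi_0=0$ and $\lambda=0$. This is shorter and sidesteps the obstacle you flag.
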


\begin{proof}
The canonical \spinc-structure $\rho_{can}$ corresponds to the case where $E=\mathbb{C}$ is the trivial line bundle. 
We pick the perturbation $\sqrt{-1}\mu=F_{A_0}^+ - \frac{\e}{4} \omega$.

First, we claim that $\alpha$ cannot vanish. Indeed, the last equation of~(\ref{system}) reads
\begin{equation}\label{curvature_exp}
\left(d_B^+A\right)^{1,1}=-\frac{\e}{4}(1-|\alpha|^2+|\beta|^2)\,\omega,
\end{equation}
where $A\in\e\Omega_B^1(M).$ 
Let $\theta\in\Gamma_B(\Lambda^{0,0}Q^\ast)$ be a non zero  holomorphic section with respect to the holomorphic connection ${\mathcal{A}}$, i.e.
$\bar\partial_{\mathcal{A}}\theta=0.$ The existence of $\theta$ is a consequence of Theorem~\ref{regularity}. Actually, $\theta$ is a nowhere vanishing section since the line bundle is trivial. 
Then, $A^{0,1}=-\bar\partial_B\log |\theta|^2$ and thus $A=\frac{1}{2}\left(\left(\partial_B-\bar\partial_B\right)\log |\theta|^2\right).$
In particular, $d_BA=\bar\partial_B\partial_B\left(\log |\theta|^2\right)=\frac{\sqrt{-1}}{2}d_BJd_B\left(\log |\theta|^2\right).$ The inner product with~$\omega$ gives
\begin{eqnarray*}
\langle\left(d^+_BA\right)^{1,1},\omega\rangle&=&\langle d_B A,\omega\rangle
=\frac{\sqrt{-1}}{2}\langle Jd_B\left(\log |\theta|^2\right),\delta_B\omega\rangle\\
&=&\frac{\sqrt{-1}}{2}\langle Jd_B\left(\log |\theta|^2\right), J{\kappa_B} \rangle
=0. 
\end{eqnarray*}

Using~(\ref{curvature_exp}), this implies that $\int_M\left(1-|\alpha|^2+|\beta|^2\right) v_g=0 $ and thus $\alpha$ cannot vanish. We deduce that $\beta \equiv 0$.
Now, Lemma~\ref{Weitzenbock_2} and~(\ref{curvature_exp}) imply that
\begin{equation*}
\frac{1}{2}\int_M |d_{\mathcal{A}}\alpha|^2 v_g -\frac{1}{4} \int_M (1-|\alpha|^2)\,|\alpha|^2 v_g=0. 
\end{equation*}
As $\int_M\left(1-|\alpha|^2\right) v_g=0,$ we can rewrite this as
\begin{equation}\label{Weit_integrated}
\frac{1}{2}\int_M |d_{\mathcal{A}}\alpha|^2 v_g +\frac{1}{4} \int_M (1-|\alpha|^2)^2 v_g=0. 
\end{equation}

From~(\ref{Weit_integrated}), we conclude that $d_{\mathcal{A}}\alpha=0$ and $|\alpha|\equiv 1$. This implies that $A=\frac{1}{2}\left(\left(\partial_B-\bar\partial_B\right)\log |\alpha|^2\right)=0$
and so $\mathcal{A}$ is the exterior derivative $d$. Hence, $\alpha$ is a constant and after a gauge transformation, $\alpha\equiv 1.$



It remains to be proven that the moduli space is smooth at this point. The problem is reduced to check that the kernel of $d\mathfrak{sw}_{\rho_{can},\mu}|_{(d,(1,0))}$ is trivial.
We are looking to the solutions $(\e\lambda,(\phi_0,\phi_1))$ of the system
\begin{eqnarray}
d^+_B(\e\lambda)-(\bar{\phi}_1-{\phi}_1)-\e\phi_0\,\omega&=&0,\label{equa_1}\\
 \bar\partial_B \phi_0+\bar\partial^\ast_B \phi_1+(\e\lambda)^{0,1}&=&0\label{equa_2}.
\end{eqnarray}
By gauge transformation, we are assuming here that $\phi_0$ is real. We apply $\bar\partial_B$ to the equation $(\ref{equa_2})$ to obtain
\[
\bar\partial_B\bar\partial^\ast_B \phi_1+\bar\partial_B(\e\lambda)^{0,1}=0,
\]
while from the equation $(\ref{equa_1}),$ we have $\bar\partial_B(\e\lambda)^{0,1}=\phi_1.$ Hence, 
\[
\bar\partial_B\bar\partial^\ast_B \phi_1+\phi_1=0.
\]
Coupling with $\phi_1$ and integrating, we deduce that $\phi_1=0.$ Now, the system becomes
\begin{eqnarray}
\frac{1}{2}(d_B\lambda,\omega)-\phi_0&=&0,\label{equa_3}\\
 \bar\partial_B \phi_0+(\e\lambda)^{0,1}&=&0.\label{equa_4}
\end{eqnarray}
The equation~(\ref{equa_4}) implies that $d_B\lambda=2\e\,\bar\partial_B\partial_B \phi_0.$ Plugging this in equation~(\ref{equa_3}) and by a simple computation
we obtain that $-\Delta_B\phi_0+\nabla^T_{\tau_B}\phi_0-\phi_0=0.$ Coupling with $\phi_0$ and integrating, we obtain
\[
\int_M-|d_B\phi_0|^2+\frac{1}{2}\left(d_B|\phi_0|^2\right)(\tau_B)-|\phi_0|^2=0.
\]
However, as $\tau_B=0,$ we conclude that $\phi_0=0$ and thus $\lambda=0.$
\end{proof}

\begin{rem}\label{taubes_result}
A similar non-vanishing result should hold when the transverse structure $(g_Q,J,\omega)$ is almost-K\"ahler i.e.~when $(g_Q,J)$ is almost-Hermitian and $\omega$ is closed~\cite{tau}.
Moreover, the tautness condition is not needed in the above result.
\end{rem}

\section{$K$-contact and Sasakian 5-manifolds}\label{Sec_11}
%

In this section, we take a closer look at some particular class of manifolds with Riemannian foliations: Sasakian manifolds
and more generally $K$-contact manifolds.
Let $(M,\eta)$ be a {\it{contact manifold}} of dimension $2n+1$, where $\eta$ is the \emph{contact $1$-form} satisfying $\eta\wedge(d\eta)^n\neq 0$ at every point of $M$.
The Reeb vector field $\xi \in \Gamma(TM)$ is uniquely determined by
\begin{equation*}
\eta(\xi)=1,\quad \iota_\xi\left(d\eta\right)=0.
 \end{equation*}
The corresponding \emph{foliation} $\scrF_\xi$ is the distribution $\mathbb{R}\xi \subset TM.$ 

\begin{defn}\label{def:K-contact}
A \emph{$K$-contact structure} $(\eta,\xi,\Phi,g)$ on~$M$ consists of a contact form~$\eta$ with Reeb field $\xi$ together with an endomorphism $\Phi:TM\rightarrow TM$ satisfying
\begin{equation*}
\quad\Phi^2=-\mathrm{Id}_{TM}+\xi\otimes\eta,\quad \mathcal{L}_\xi \Phi = 0.
\end{equation*}
We also require the following compatibility conditions with $\eta$:
\begin{equation*}
d\eta(\Phi X,\Phi Y)=d\eta(X,Y),\quad d\eta(Z,\Phi Z)>0\quad\forall X,Y \in TM, Z\in \ker(\eta) \setminus \{0\}.
\end{equation*}
This defines a Riemannian metric on $M$ given by
\begin{equation*}
g(X,Y)=d\eta(X,\Phi Y)+\eta(X)\eta(Y).
\end{equation*}
\end{defn}

For a $K$-contact structure $(\eta,\xi,\Phi,g)$, the foliation $\scrF_\xi$ is Riemannian and the leaves of $\scrF_\xi$ are geodesics. In particular, the foliation is taut.
Moreover, on closed K-contact manifolds, $H^1(\scrF)$ is isomorphic to
$H^{1}(M,\mathbb{R})$~\cite[Proposition 7.2.3]{bo-ga}. In particular, $H^1(\scrF)\cap H^{1}(M,\mathbb{Z})$
is a lattice in $H^1(\scrF)$. So, our hypothesis to obtain a compact moduli space is satisfied.

\begin{defn}
A {\it{Sasakian structure}} is a $K$-contact structure $(\eta,\xi,\Phi,g)$ satisfying the integrability condition 
\[
D^g_X\Phi=\xi\otimes X^{\flat_g}-X\otimes\eta,\quad \forall X\in \Gamma(TM),
\]
where $D^g$ is the Levi-Civita connection.
\end{defn}

Clearly, a $K$-contact (resp.~Sasakian) structure induces an almost-K\"ahler (resp. \!K\"ahler) structure on the normal bundle $Q.$
In particular, $\Phi$ induces an almost-complex (resp. ~complex) structure $J$ on $Q$ and hence a canonical \spinc-structure.
We can now try to apply the vanishing theorems of Section~\ref{Sec_9} to this particular setup:
\begin{cor}
Let $(M,\eta,\xi,\Phi,g)$ be a closed $K$-contact 5-manifold. If the transverse scalar curvature $s^T$ of $g$ is positive, then all the invariants $SW(\scrF_\xi,\rho)$ vanish.
\end{cor}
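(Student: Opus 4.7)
The plan is to combine three facts from the preceding sections: the compactness/invariant-defining hypotheses are automatically satisfied on a closed $K$-contact manifold, the a priori pointwise estimate of Lemma~\ref{pointwise_bound} forces the spinor to vanish once $\mu$ is small compared to $s^T$, and the Seiberg--Witten invariant is independent of the generic choice of $\mu$. So the strategy is to exhibit a single generic perturbation for which the moduli space is empty.

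First I would verify that $(M,\scrF_\xi,g)$ meets the standing hypotheses of the definition of $\mathrm{SW}(\scrF_\xi,\rho)$: on a $K$-contact manifold the leaves are geodesics, so the mean curvature vector field $\tau$ vanishes, hence $\kappa=\kappa_B=0$ and the foliation is taut. Moreover, as recalled at the start of Section~\ref{Sec_11}, $H^1(\scrF_\xi)\cong H^1(M,\mathbb{R})$ on a closed $K$-contact manifold, so $H^1(\scrF_\xi)\cap H^1(M,\mathbb{Z})$ is a lattice in $H^1(\scrF_\xi)$. Thus the conclusions of Theorem~\ref{t:compactness} and the definition of $\mathrm{SW}(\scrF_\xi,\rho)$ apply.

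Next I would exploit the a priori estimate. Since $\kappa_B=0$, Lemma~\ref{pointwise_bound} reduces to
\[
|\psi|^2\leqslant \sup_M\bigl(-s^T+4|\mu|,\,0\bigr)
\]
for any $(A,\psi)\in\mathfrak{m}_{\rho,\mu}$. Compactness of $M$ and positivity of $s^T$ yield $s^T\geqslant c>0$ everywhere; therefore, for every basic self-dual perturbation $\mu$ with $\sup_M 4|\mu|<c$, every solution of~(\ref{perturbed_SW}) must be reducible, $\psi\equiv 0$. The curvature equation then reads $F_A^+=\sqrt{-1}\mu$, i.e.\! $\sqrt{-1}\mu\in\Pi_\rho(g_Q)$ in the notation of~(\ref{wall}).

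To conclude, I would choose $\mu$ both small (so that $4|\mu|<c$ pointwise) and generic in the sense needed by the definition of $\mathrm{SW}(\scrF_\xi,\rho)$; this is possible because smallness is open and genericity is dense. By the hypothesis $\dim\mathcal{H}^+(\scrF_\xi)\geqslant 2$ implicit in the definition of the invariant, the affine subspace $\Pi_\rho(g_Q)$ has codimension at least $2$ inside $\sqrt{-1}\Omega_B^+(M)$, so by a further generic adjustment we may additionally arrange $\sqrt{-1}\mu\notin \Pi_\rho(g_Q)$. For such $\mu$ no solution exists at all, hence $\mathcal{M}_{\rho,\mu}=\varnothing$ and $\mathrm{SW}(\scrF_\xi,\rho)=0$. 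Since the invariant is independent of the choice of generic $\mu$, this value computes $\mathrm{SW}(\scrF_\xi,\rho)$ in general. The only subtle step is arranging smallness and genericity simultaneously, but since both are prescribed by open-dense conditions in $\Omega_B^+(M)$, no real obstacle arises.
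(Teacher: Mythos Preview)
Your argument is correct and is essentially the approach the paper intends: the corollary is stated without proof as an application of the vanishing results of Section~\ref{Sec_9}, and you have supplied the standard details---using Lemma~\ref{pointwise_bound} (rather than Lemma~\ref{vanishing}, which treats only $\mu=0$) to force $\psi\equiv 0$ for every sufficiently small perturbation, then choosing $\mu$ small, generic, and outside the codimension-$\geqslant 2$ wall $\Pi_\rho(g_Q)$ so that no reducibles survive either. One minor redundancy: in this paper ``generic $\mu$'' in the sense of the definition already entails $\sqrt{-1}\mu\notin\Pi_\rho(g_Q)$ (smoothness of $\mathcal{M}_{\rho,\mu}$ is obtained only at irreducible solutions via Lemma~\ref{differential onto}), so your separate ``further generic adjustment'' is not needed, though it does no harm.
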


\begin{ex}
If the metric $g$ of the $K$-contact structure is Einstein, then the structure is in fact Sasakian and the metric has positive scalar curvature~\cite{Apo-Dra-Mor,boy-gal-1,bo-ga}.
Also, whenever a $K$-contact manifold is locally symmetric or conformally flat, then it is Sasakian and has constant positive curvature (see~\cite{Oku,Tan,Tan-1}).
We conclude that in all of the above cases all the basic Seiberg--Witten invariants vanish.
\end{ex}
%



On the other hand, we can also apply the non-vanishing result of Theorem~\ref{nonvanishing}: 

\begin{cor}
Let $(M,\eta,\xi,\Phi,g)$ be a compact 5-dimensional Sasakian manifold with $\dim\mathcal{H}^+(\scrF)\geqslant 2$. Then $\mathrm{SW}(\scrF_\xi,\rho_{can})=\pm1.$ 
\end{cor}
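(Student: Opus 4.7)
The plan is simply to verify that a closed $5$-dimensional Sasakian manifold with the given hypothesis falls under the scope of Theorem~\ref{nonvanishing}, and then invoke that theorem directly. So the whole proof amounts to unpacking the definitions from Section~\ref{Sec_11}.

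First, I would observe that the Reeb foliation $\scrF_\xi$ has codimension $4$ (since $\dim M = 5$ and the leaves are one-dimensional) and that, because $(M,\eta,\xi,\Phi,g)$ is $K$-contact, $\scrF_\xi$ is a Riemannian foliation with $g$ bundle-like. As emphasized just after Definition~\ref{def:K-contact}, the leaves of $\scrF_\xi$ are geodesics, so $\scrF_\xi$ is taut (the mean curvature vanishes). Next, the Sasakian condition promotes the transverse almost-Hermitian structure $(g_Q,J)$ coming from $\Phi$ to a genuine transverse K\"ahler structure $(g_Q,J,\omega)$ with $\omega = \tfrac{1}{2}d\eta$ (which is basic and closed), so the normal bundle $Q$ inherits the canonical transverse \spinc-structure $\rho_{can}$.

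The only remaining hypothesis of Theorem~\ref{nonvanishing} is the lattice condition on $H^1(\scrF_\xi)\cap H^1(M,\mathbb{Z})$ inside $H^1(\scrF_\xi)$. This is precisely the content of the paragraph following Definition~\ref{def:K-contact}: on closed $K$-contact manifolds, $H^1(\scrF_\xi)\cong H^1(M,\mathbb{R})$ by \cite[Proposition 7.2.3]{bo-ga}, and the integral cohomology $H^1(M,\mathbb{Z})$ sits as a full lattice inside $H^1(M,\mathbb{R})$, hence inside $H^1(\scrF_\xi)$. Combined with the standing assumption $\dim\mathcal{H}^+(\scrF_\xi)\geqslant 2$, every hypothesis of Theorem~\ref{nonvanishing} is satisfied.

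Applying Theorem~\ref{nonvanishing} with $\scrF=\scrF_\xi$ and the canonical transverse \spinc-structure $\rho=\rho_{can}$ then yields $\mathrm{SW}(\scrF_\xi,\rho_{can})=\pm 1$. There is no real obstacle here; the only thing one should be mildly careful about is that the transverse K\"ahler form arising from the Sasakian structure is indeed basic and closed (which follows from $\mathcal{L}_\xi d\eta=0$ and $d^2\eta=0$), so that Theorem~\ref{nonvanishing} genuinely applies.
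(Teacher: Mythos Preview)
Your proposal is correct and matches the paper's approach: the corollary is stated without proof in the paper precisely because it is an immediate application of Theorem~\ref{nonvanishing} once one has checked (as the preceding paragraphs in Section~\ref{Sec_11} do) that a closed Sasakian $5$-manifold is taut, transversally K\"ahler, and satisfies the lattice condition via $H^1(\scrF_\xi)\cong H^1(M,\mathbb{R})$. Your write-up simply makes these verifications explicit.
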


Finally, we can combine the vanishing and non-vanishing results in order to obtain obstructions to metrics with positive scalar curvature or Einstein metrics.

\begin{cor}\label{obstruction}
Let $(M,\eta,\xi,\Phi,g)$ be a $5$-dimensional compact Sasakian manifold with $\dim\mathcal{H}^+(\scrF_\xi)\geqslant 2$. Then $M$ does not admit any bundle-like metric with respect to $\scrF_\xi$ of positive transverse scalar curvature.
In particular, $(M,\eta)$ does not admit a Sasaki--Einstein metric.
\end{cor}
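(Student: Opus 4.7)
The plan is to derive a contradiction with the non-vanishing result of the preceding corollary. The $K$-contact hypothesis guarantees that $\scrF_\xi$ is taut and that $H^1(\scrF_\xi)\cap H^1(M,\mathbb{Z})$ is a lattice in $H^1(\scrF_\xi)$, so together with $\dim\mathcal{H}^+(\scrF_\xi)\geqslant 2$ the invariant $\mathrm{SW}(\scrF_\xi,\rho_{can})$ is well defined; by the preceding Sasakian corollary it equals $\pm 1$.

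Suppose, for contradiction, that some bundle-like metric $g'$ for $\scrF_\xi$ has $s^{T,g'}>0$. Since the invariant is independent of the bundle-like metric, we compute it using $g'$ (or an appropriate leafwise modification). By Remark~\ref{harmonic_remark} we deform $g'$ in the leaf directions to a bundle-like metric $g''$ with $g''_Q=g'_Q$ (so $s^{T,g''}=s^{T,g'}>0$) and with $\kappa_B$ being $\Delta_B$-harmonic, so in particular $\delta_B\kappa_B=0$. For any solution $(A,\psi)$ of the perturbed basic Seiberg--Witten equations~(\ref{perturbed_SW}) with perturbation $\mu$, Lemma~\ref{pointwise_bound} now yields
\[
|\psi|^2\leqslant\sup_{M}\bigl(-s^{T,g''}-|\kappa_B|^2+4|\mu|,\,0\bigr).
\]
If $4\|\mu\|_{L^\infty}<\min_M s^{T,g''}$, the right-hand side vanishes identically, forcing $\psi\equiv 0$; hence every solution is reducible with $F_A^+=\sqrt{-1}\mu$.

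To conclude that $\mathcal{M}_{\rho_{can},\mu}=\emptyset$ we need a $\mu$ that is simultaneously small enough for the above bound to trigger, off the affine wall $\Pi_{\rho_{can}}(g''_Q)$ of~(\ref{wall}), and generic enough for transversality. The wall has codimension $\dim P^+$ in $\sqrt{-1}\,\Omega^+_B(M)$, which by the hypothesis $\dim\mathcal{H}^+(\scrF_\xi)\geqslant 2$ and the metric-independence of the relevant Euler characteristic of the basic Seiberg--Witten complex is at least two; hence the complement of the wall is dense and connected and intersects every neighborhood of the origin. Picking such a $\mu$ gives $\mathcal{M}_{\rho_{can},\mu}=\emptyset$ and therefore $\mathrm{SW}(\scrF_\xi,\rho_{can})=0$, contradicting $\pm 1$ and ruling out any such $g'$. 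The main technical obstacle is precisely this simultaneous choice of $\mu$, which succeeds only because the hypothesis $\dim\mathcal{H}^+(\scrF_\xi)\geqslant 2$ forces the wall to have codimension at least two.

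For the Sasaki--Einstein assertion, a Sasaki--Einstein metric in dimension $2n+1=5$ satisfies $\mathrm{Ric}=2n\,g=4g$, so $s=2n(2n+1)=20$; combined with the standard Sasakian identity $s=s^T-2n$ this gives $s^T=24>0$, and the first part applies.
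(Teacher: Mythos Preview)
Your argument is correct and is precisely the elaboration of the paper's implicit proof, which simply combines the vanishing corollary for positive transverse scalar curvature with the non-vanishing corollary for Sasakian structures and the metric-independence of $\mathrm{SW}(\scrF_\xi,\rho_{can})$. One small simplification: since $\scrF_\xi$ is taut, the \'Alvarez class $[\kappa_B]\in H^1(\scrF_\xi)$ vanishes, so after your leafwise deformation making $\kappa_B$ $\Delta_B$-harmonic you actually obtain $\kappa_B=0$; then $P^+=\mathcal{H}^+(\scrF_\xi)$ directly by Proposition~\ref{complex}, and the wall-codimension step needs no appeal to index invariance.
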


When the Sasakian structure is regular (see~\cite[Definition 6.1.25]{bo-ga}), it is clear that the basic Seiberg--Witten invariants are the same as the classical Seiberg--Witten invariants of the smooth 4-manifold given by the leaf space. This implies that the invariants really depend on the foliation and not only on the diffeomorphism type of the manifold as shown in the following simple example:

\begin{ex}\label{depends foliation}

The 5-manifold $8(S^2 \times S^3)$ admits two regular Sasakian structures: a negative one (see~\cite[Definition 7.5.24]{bo-ga}) coming from a circle bundle over a Barlow surface (see~\cite{Bar} and~\cite[Example 10.4.6]{bo-ga}) and a positive one coming from a circle bundle over a del Pezzo surface $\mathbb{CP}^2 \# 8\overline{\mathbb{CP}^2}$ (see~\cite[Proposition 10.4.4]{bo-ga}). In the first case, there are two non-vanishing classes corresponding to the canonical and the anti-canonical class (see for instance~\cite{Ozs-Sza}), whilst in the second case all the basic Seiberg--Witten invariants vanish.
\end{ex}

\begin{center}
\begin{tikzpicture}
 \tikzstyle{block}=[rectangle, draw=blue!40, thick, fill=blue!10, text width=6em, text centered, rounded corners, minimum height=2em];
  \path (0,0) node [block] (left up) {negative Sasakian}
            (7,0) node [block] (right up) {positive Sasakian}
	   (0,-2) node [block] (left down) {Barlow surface}
            (7,-2) node [block] (right down) {del Pezzo surface};
  \draw[dashed] (left up) -- node[above,text width=8em, text centered]{diffeomorphic}(right up);
  \draw[dashed] (left up) -- node[below, text centered]{different as foliations}(right up);
  \draw[->] (right up) -- node[left,text width=6em, text centered]{regular circle bundle}(right down);
  \draw[->] (left up) -- node[right,text width=6em, text centered]{regular circle bundle}(left down);
  \draw[dashed] (left down) -- node [below, text centered]{not diffeomorphic}(right down);
  \draw[dashed] (left down) -- node [above,text centered]{homeomorphic}(right down);
\end{tikzpicture}
\end{center}

\begin{rem}
On a closed simply-connected 5-manifold, the space of \spinc-structures is isomorphic to the space of homotopy classes of almost-contact structures. Whilst this isomorphism is not canonical in general, it becomes so on a contact metric manifold. 
We conclude that on a closed simply-connected $K$-contact 5-manifold $(M,\eta,\xi,\Phi,g)$, the basic Seiberg--Witten map may be interpreted as a map eating homotopy classes of almost-contact structures instead of \spinc-structures:
$$SW(\scrF_\xi) :  \{\text{homotopy classes of almost-contact structures}\} \rightarrow \mathbb{Z}.$$
Furthermore, Geiges showed that every homotopy class of almost contact structures contains at least one contact structure~\cite{Gei}.
\end{rem}

\section{Applications}

We believe the construction of foliated Seiberg--Witten invariants lays the groundwork for some future applications which we now briefly describe.

Firstly, the invariant can be used as an obstruction to the existence of transverse Einstein metrics compatible with a given foliation structure. Indeed, Corollary~\ref{obstruction} provides a new obstruction to the existence of Sasaki--Einstein metrics on a given compact Sasakian 5-manifold. In the same vein, Remark~\ref{taubes_result} can be used to say that a $5$-dimensional compact contact manifold $(M,\eta,\xi)$
equipped with a bundle-like Riemannian metric of positive transverse scalar curvature and satisfying $\dim\mathcal{H}^+(\scrF_\xi)\geqslant 2$ should not admit any compatible K-contact structure. Pushing this line of thought further, one may try to apply the Atiyah--Singer index formula for basic transversally elliptic operators~\cite{Bru-Kam-Ric, Bru-Kam-Ric-1} to obtain a LeBrun--Hitchin--Thorpe inequality ~\cite{leb-2} for transverse Einstein metrics on Riemannian foliations.

A second important application that we can point out is that our invariants may allow us to define Seiberg--Witten invariants for {\it{orbifolds}} (for an introduction to orbifold theory, we refer the reader to~\cite{Ade-Lei-Rua}). This approach looks particularly interesting as, to the best of our knowledge, an orbifold version of Seiberg--Witten theory seems to be only known for symplectic $4$-orbifolds (see~\cite{che,che-1}) and it can be used for example to prove the non-existence of Einstein metrics on $4$-orbifolds (see for instance~\cite{leb,sun}). To define foliated Seiberg--Witten invariants for orbifolds, recall that any orbifold can be represented as the space of leaves of a Riemannian foliation $(M,\scrF)$ with compact leaves. The classical construction due to Satake is to pick a Riemannian metric on the orbifold and to consider the orthonormal frame bundle on the orbifold. The action of the orthogonal group on the orthonormal frame bundle is locally free and the quotient space is precisely the orbifold (for more details see~\cite{Kaw,Kaw-1,Moe-Mrc}).
Still, it remains an open question to check whether $H^1(\scrF)\cap H^{1}(M,\mathbb{Z})$ is a lattice in $H^1(\scrF)$. It has been shown~\cite{Sat} that a version of de Rham cohomology for orbifolds is isomorphic to its singular cohomology. Hence it makes sense to speak about integral singular cohomology $H^1(\scrF,\mathbb{Z})$. However, it is not clear to us whether the condition is satsified.

A third possible application of the foliated Seiberg--Witten invariants is to distinguish {\it{homotopic}} but non {\it{isotopic}} Riemannian foliations. Unfortunately, we did not succeed to find any examples that are not known to be different by other methods.

Lastly, we conclude this discussion by pointing out that the original motivation for this work was to study the following conjecture of LeBrun: on an oriented compact $4$-manifold equipped with an anti-self-dual Einstein metric of negative scalar curvature all the Seiberg--Witten invariants vanish~\cite{leb-1}. Proving such a result seems out of reach by current methods but a possible line of attack would be to define Seiberg--Witten invariants on the twistor space and link these to the Seiberg--Witten invariants on the base 4-manifold. Being able to define Seiberg--Witten invariants on Riemannian foliations looks like a promising first step in this direction.

\end{document}